\title{Exploiting Agent Symmetries for Performance Analysis of Distributed Optimization Methods}
\author{\firstname{Sebastien} \lastname{Colla}}
\address{ICTEAM Institute, Department of applied mathematics, UCLouvain, Louvain-la-Neuve, Beglium}
\author{\firstname{Julien} \middlename{M.} \lastname{Hendrickx}}
\address{ICTEAM Institute, Department of applied mathematics, UCLouvain, Louvain-la-Neuve, Beglium}
\thanks{S. Colla is a FRIA grantee of the Fonds de la Recherche Scientifique - FNRS. J. M. Hendrickx is supported by the Federation
Wallonie-Bruxelles through the “\emph{RevealFlight}” Concerted Research Action (ARC) and by the F.R.S.-FNRS via the research project KORNET and the Incentive Grant for Scientific Research (MIS) "\emph{Learning from Pairwise Comparisons}". \vspace*{-4mm}}
\keywords{Distributed optimization, Performance estimation problem, Worst-case analysis.}
\begin{abstract}
  We show that, in many settings, the worst-case performance of a distributed optimization algorithm is independent of the number of agents in the system, and can thus be computed in the fundamental case with just two agents. 
  This result relies on a novel approach that systematically exploits symmetries in worst-case performance computation, framed as Semidefinite Programming (SDP) via the Performance Estimation Problem (PEP) framework. 
  Harnessing agent symmetries in the PEP yields compact problems whose size is independent of the number of agents in the system. When all agents are equivalent in the problem, we establish the explicit conditions under which the resulting worst-case performance is independent of the number of agents and is therefore equivalent to the basic case with two agents. 
  Our compact PEP formulation also allows the consideration of multiple equivalence classes of agents, and its size only depends on the number of equivalence classes. This enables practical and automated performance analysis of distributed algorithms in numerous complex and realistic settings, such as the analysis of the worst agent performance.
  We leverage this new tool to analyze the performance of the EXTRA algorithm in advanced settings and its scalability with the number of agents, providing a tighter analysis and deeper understanding of the algorithm performance.
\end{abstract}
\begin{document}
\maketitle
  
\section{Introduction}

We consider the distributed optimization problem in which a set of agents $\V = \{1,\dots,\N\}$ is connected through a communication network and works together to minimize the average of their local functions $f_i: \Rvec{d}\to\mathbb{R}$, \vspace{-1mm}
\begin{equation} \label{opt:dec_prob}
  \underset{\text{\normalsize $x \in \Rvec{d}$}}{\mathrm{min}} \quad f(x) = \frac{1}{\N}\sum_{i=1}^\N f_i(x), \vspace{-1mm}
\end{equation}
Each agent $i$ performs local computation on its own local function $f_i$ and exchanges local information with its neighbors to update its own guess $x_i$ of the solution. The agents all want to come to an agreement on the minimizer $x^*$ of the global function $f$.
One of the first methods proposed to solve \eqref{opt:dec_prob} is the distributed (sub)gradient descent (DGD) \cite{DsubGD} where agents successively perform an average consensus step \eqref{eq:DGD_cons} and a local gradient step \eqref{eq:DGD_comp}:  \vspace{-1mm}
\begin{align}
  y_i^k &= \sum_{j=1}^\N w_{ij}^k x_j^k,   \hspace{30mm} \text{for $i=1,\dots,\N$,} \label{eq:DGD_cons} \\[-0.5mm]
  x_i^{k+1} &= y_i^k - \alpha^k \nabla f_i(x_i^k),  \hspace{20mm} \text{for $i=1,\dots,\N$,} \label{eq:DGD_comp} \vspace{-0.5mm}
\end{align}
for given step-sizes $\alpha^k > 0$ and matrices of weights $W^k = [w_{ij}^k] \in \Rmat{\N}{\N}$, typically assumed symmetric and with rows and columns summing to one. We call such matrices \emph{averaging matrices}. 
\begin{definition}
  We say that a matrix $W \in \Rmat{\N}{\N}$ is an \emph{averaging matrix} if 
  \begin{enumerate}
    \item $W^T = W$,  \hfill  (Symmetry)
    \item  $W \mathbf{1} = \mathbf{1}$ and $\mathbf{1}^T W  = \mathbf{1}^T$, \hfill (Averaging Consensus)
  \end{enumerate}
\end{definition}
There are many other methods combining gradient sampling and consensus steps. We call $\Ad$ the set of all these decentralized optimization algorithms. 
\begin{definition}[Class of distributed optimization methods $\Ad$] \label{def:Ad}
  We define $\Ad$ as the set of all the decentralized optimization algorithms built based on the three following types of operations and which may involve an arbitrary number of local variables:
  \begin{enumerate}[label=(\roman*)]
    \item \emph{Gradient:}
 Each agent samples the (sub)gradient of its local function at any of its local variables $x_i$
 \[ g_i = \nabla f_i(x_i) \qquad \text{for $i=1,\dots,\N$}. \]
  \item \emph{Consensus:}
  All the agents perform a consensus step on any of their local variables $x_i$
  \begin{equation}
    y_i = \sum_{j=1}^\N w_{ij} x_j, \quad \text{for $i=1,\dots,\N$.}  \label{eq:cons_step}
  \end{equation}
  with weights $w_{ij}$ given by an averaging matrix $W$.
  Different consensus steps can possibly use the same averaging matrix. 
  \item\emph{Linear Combinations:}
  Each agent declares that a linear combination of its local variables holds, with coefficients known in advance. 
  Depending on the settings, the linear coefficients should be identical for all agents (coordinated) or not (uncoordinated). 
  \end{enumerate}
\end{definition}
This class of methods includes many algorithms. These three operations even allow implicit (or proximal) updates, e.g. updates where the point at which the gradient is evaluated is not explicitly known:
\[x_i^{k+1} = x_i^k - \alpha^k \nabla f_i(x_i^{k+1}) \qquad \text{for all $i =1,\dots,\N$}.\]
Among the primal-based algorithms, $\Ad$ includes, for example, DGD \cite{DsubGD}, DIGing \cite{DIGing}, EXTRA \cite{EXTRA}, NIDS \cite{NIDS}, Acc-DNGD \cite{AccDNGD}, OGT \cite{optimal_gradient_tracking} and their variations \cite{ATC-DIGing, AugDGM, li2020revisiting, jakovetic2018unification}.
Among the dual-based algorithms, $\Ad$ includes, for example, the Distributed Dual Dveraging \cite{DDA}, MSDA \cite{MSDA}, MSDP \cite{MSPD}, APAPC \cite{APAPC}, OPTRA \cite{OPTRA}, APM \cite{APM} and others \cite{uribe2020dual}. The classical decentralized version of ADMM \cite{ADMM_1, ADMM} does not fit in $\Ad$ because the agents do not explicitly use averaging consensus when interacting, but the weighted decentralized version of ADMM \cite{weighted_ADMM} does fit in $\Ad$.

In optimization, the assessment of the performance of a method is generally based on worst-case guarantees. Accurate worst-case guarantees on the performance of decentralized algorithms are crucial for a comprehensive understanding of how their performance is influenced by their parameters and the network topology, which then allows to correctly tune and compare the different algorithms.
However, deriving such bounds can often be a challenging task, requiring a proper combination of the impact of the optimization component and of the communication network, which may result in conservative or highly complex bounds. Moreover, theoretical analyses often require specific settings that differ from one algorithm to another, making them difficult to compare. 

\subsection{Contributions and paper organization}
In this work, we propose a simplified and unified way of analyzing the worst-case performance of distributed optimization methods from $\Ad$, based on the Performance Estimation Problem (PEP) framework and the exploitation of agent symmetries in the problem. This allows us, in particular, to identify and characterize situations in which performance is independent of the number of agents $\N$ in the network, in which case the performance computation can be reduced to the basic case with two agents.

The Performance Estimation Problem (PEP) formulates the computation of a worst-case performance guarantee as an optimization problem itself, by searching for the iterates and the function leading to the largest error after a given number of iterations of the algorithm. The PEP approach has led to many results in centralized optimization, see e.g. \cite{PEP_Smooth,PEP_composite}, and we have recently proposed a formulation tailored for decentralized optimization that searches for the worst local iterates and local functions for each agent, see \cite{PEP_dec} (and its preliminary conference version \cite{PEP_dec_CDC}). 
This formulation considers explicitly each agent in the PEP and so the size of the optimization problem increases with the number of agents $\N$, and the results are valid only for a given $\N$.
We therefore call this formulation the agent-dependent formulation. Section \ref{sec:agentPEP} summarizes this formulation and adds some new results and interpretations.
To obtain guarantees that are valid for any symmetric and stochastic averaging matrix, with given bounds on the eigenvalues, in \cite{PEP_dec} we proposed a way of representing a consensus step \eqref{eq:cons_step} in PEP via necessary constraints that consensus variables $x_i$ and $y_i$ should satisfy. Subsection \ref{sec:consensusPEP} describes these constraints and analyzes their sufficiency. We first show that no convex description can tightly describe the consensus iterates, and then we characterize the generalized steps that are tightly described by the proposed convex constraints. This helps explain why these necessary conditions enable accurate performance calculations in PEP, as observed in \cite{PEP_dec}. 

In \cite{PEP_dec}, we have also observed that for many performance settings, the worst-case guarantees obtained with PEP are independent of the number of agents, while the PEP problems are not.  
 This motivates us to find compact ways of formulating the agent-dependent PEP. We have performed a first attempt in \cite{PEP_dec_CDC22}, where we built a relaxation of the problem whose size is independent of the number of agents $\N$. In particular, the relaxation does not exploit separability of the objective function $f(x)$ in \eqref{opt:dec_prob}.
 While the relaxation in \cite{PEP_dec_CDC22} gives good worst-case bounds, it remains an open question whether the equivalence with the agent-dependent formulation holds and how to interpret the resulting worst-case solution for the decentralized problem.
 In this paper, we provide an intuitive and systematic way of exploiting the agent symmetries in PEP to make the problem compact, with a size independent of the number of agents. In Section \ref{sec:equicl}, we define \emph{equivalent} agents in a PEP for decentralized optimization, and we leverage the convexity of the PEP to show that we can restrict it to solutions symmetrized over equivalent agents, without impacting its worst-case value. 
 Section \ref{sec:symPEP} focuses on the case where all the agents are equivalent, meaning none of them play a specific role in the algorithm or the performance evaluation. In that case, we show that the agent-dependent PEP can be written compactly, i.e. with an SDP whose size is independent of $\N$. Moreover, we show that the worst-case value of this compact formulation is also independent of $\N$ in many common performance settings of distributed optimization which are scale-invariant.
 This result is particularly powerful since it allows determining situations where the performance analysis of a distributed algorithm can be reduced to the fundamental case with only two agents. We further leverage agent equivalence to draw general conclusions about the symmetry of worst-case local functions and iterates in decentralized algorithms.

Then, Section \ref{sec:subsetsAgPEP} generalizes the results from Section \ref{sec:symPEP} to situations where there are several equivalence classes of agents in the PEP, leading to a compact PEP formulation whose size only depends on the number of equivalence classes $\U$, and not directly on the total number of agents $\N$.
Indeed, while many performance estimation problems for decentralized optimization have all agents equivalent, there are more complex ones that involve multiple equivalence classes of agents, for example, when different groups of agents use different (uncoordinated) step-sizes, function classes, initial conditions, or even different algorithms.
It can also happen that the performance measure focuses on a specific group of agents, e.g. the performance of the worst agent.
Efficiently and accurately assessing distributed optimization algorithm performance in such advanced settings would enhance comprehensive analysis and deepen our understanding of their behavior. 

We demonstrate this in Section \ref{sec:showEXTRA}, where we analyze the performance of the EXTRA algorithm \cite{EXTRA} in advanced settings and its evolution with the number of agents in the problem. We choose EXTRA because it is a well-known decentralized optimization algorithm, one of the first to converge with constant steps, and has served as an inspiration and building block for other algorithms. Our analysis of EXTRA first confirms our theoretical results predicting which performance settings would lead to agent-independent guarantees. It also reveals that the performance of the worst agent scales sublinearly with $\N$ and can benefit from an appropriate step-size decrease with $\N$. Inspired by statistical approaches, we go further by analyzing the 80-th percentile of the agent performance, i.e. the error at or below which 80\% of the agents fall in the worst-case scenario. This performance measure presents a better dependence on the number of agents $\N$ than the performance of the worst agent and quickly reaches a plateau when $\N$ increases, which can be validated by solving compact PEP for $\N \to \infty$. To the best of our knowledge, such percentile analysis is beyond the reach of current classical analysis techniques. Finally, we analyze the performance of EXTRA under agent heterogeneity in the classes of local functions and observe that it does not depend on the total number of agents but only on the proportion of each class of functions that are present in the system.

\subsection{Related work}
An alternative approach for automatic computation of performance guarantees of optimization methods is proposed in \cite{IQC} and relies on the formulation of optimization algorithms as dynamical systems. Integral quadratic constraints (IQC), generally used to obtain stability guarantees on complex dynamical systems, are adapted to provide sufficient conditions for the convergence of optimization methods and deduce numerical bounds on the convergence rates. This theory has been extended to decentralized optimization in \cite{IQC_dec}.
The methodology analyzes the convergence rate of a single iterate, which is beneficial for the problem size that remains small, and that is also independent of the number of agents in the decentralized case. However, this does not allow dealing with non-geometric convergences, e.g. on smooth convex functions, nor to analyze cases where a property applies over several iterations, e.g. when averaging matrices are constant. These are possible with the PEP approach which computes the worst-case performance on a given number $\K$ of iterations, but solving problems whose size grows with $\K$. Moreover, the current IQC approach only applies to settings where all the agents are equivalent, which is not the case, for example, if we want to analyze the performance of the worst agent. In this work, we propose a practical way, via the PEP approach, to compute performance where there are several equivalence classes of agents. This expands the range of situations we can automatically analyze efficiently in decentralized optimization, including more complex or realistic settings. 

\subsection{Notations}
Let $x_i^k \in \Rvec{d}$ denote the $k$-th local variable $x$ of agent $i$. The local variable of all the agents can be stacked vertically in $\x^k \in \Rvec{\N d}$:
\[ \x^k = \begin{bmatrix} x_1^k \\\vdots\\ x_\N^k \end{bmatrix}, \]
We can also gather different iterates $k=0,\dots,\K$ in a matrix $X \in \Rmat{\N d}{\K+1}$:
\[ X = \begin{bmatrix} \x^0 &\dots& \x^\K \end{bmatrix}. \]
These vector notations apply to any variables that are used in the decentralized algorithms and assume that the local variables exist for each agent and each iteration, but our work also applies if the variables are not defined for all agents or all iterations of the algorithm. Furthermore, our approach also applies if there is no clear concept of iterations. Using these vector notations, we can write the consensus step \eqref{eq:cons_step} for all agents at once as 
\begin{equation}
  \y^k = (W^k \otimes I_d) \x^k, \qquad \text{for $k=0,\dots,\K$} 
  \label{eq:cons}
\end{equation}
where $W^k \in \Rmat{\N}{\N}$ is the averaging matrix used in the consensus,  $I_d$ denotes the identity matrix of size $d$ and $\otimes$ the Kronecker product. This latter notation means that we apply the same matrix $W^k$ for each dimension of the agent variables. Moreover, if the same averaging matrix $W$ is used for different consensus steps ($k=0,\dots,\K$), we can write all the consensus steps at once using matrix notations:
\begin{equation}
  Y = (W \otimes I_d) X. \label{eq:consensus}
\end{equation}
The $i^{\mathrm{th}}$ largest eigenvalues of a matrix $W$ is denoted $\lam_i(W)$.
The agent average of iterate $\x^k \in \Rvec{\N d}$ is denoted $\xb^k \in \Rvec{d}$ and is defined as $\xb^k = \frac{1}{\N} \sum_{i=1}^\N x_i^k.$
We can also gather the agent average of different iterates $k=0,\dots,\K$ in a matrix $\overline{X} \in \Rmat{d}{\K+1}$: 
$\overline{X} = \begin{bmatrix} \xb^0 &\dots& \xb^\K \end{bmatrix}. $
Finally, $\mathbf{1}$ is the vector full of 1.

\section{Agent-dependent performance estimation problem for distributed optimization} \label{sec:agentPEP}

\subsection{Performance Estimation Problem (PEP) framework}

To obtain a tight bound on the performance of a distributed optimization algorithm $\Al$, the conceptual idea is to find instances of local functions and starting points for all agents, allowed by the setting considered, that give the largest error after a given number $\K$ of iterations of the algorithm.
The performance estimation problem (PEP) formulates this idea as a real optimization problem that maximizes the error measure $\P$ of the algorithm result, over all possible functions and initial points allowed \cite{PEP_Drori}:
\begin{align}
  w(\Setting_\N)~=~&\underset{x^*,\{x_i^k, y_i^k, f_i\}_{i\in \V}}{\max} ~ \P(f_i,x_i^0,\dots,x_i^\K,x^*) && \stackrel{\text{e.g.}}{=}~ \text{\small{$\frac{1}{\N} \sum_{i=1}^\N \qty(f_i(\xb^\K) - f_i(x^*))$}} & \label{eq:PEP}\\
  \text{ s.t. } \quad& x_i^{k}, y_i^k \quad \underset{\text{\qquad e.g. DGD \eqref{eq:DGD_cons}-\eqref{eq:DGD_comp}}}{\text{from algorithm $\Al$,}} && \text{for }~i \in \V, ~ k=1,\dots,\K, & \text{(algorithm)} \\
  &  f_i \in \F,&& \text{for }~i \in \V & \text{(class of functions)} \\
  & x_i^0 \quad \underset{\text{e.g. $\|x_i^0 - x^*\|^2 \le 1$,~ $i \in \V$}}{\text{satisfies $\I$, \hspace{2cm}}}&&& \text{(initial conditions)} \\[-1mm]
  & \frac{1}{n} \sum_{i=1}^n \nabla f_i(x^*) = 0, &&&\text{(optimality condition for \eqref{opt:dec_prob})}
\end{align}
where $\Setting_\N$ is the performance evaluation setting which specifies the number of agents $\N$, the algorithm $\Al$, the number of steps $\K$, the performance criterion $\P$, the class of functions $\F$ for the local function, the initial conditions $\I$ and the class of averaging matrices $\W$ that can be used in the algorithm:
\[ \Setting_\N = \{\N,\Al,\K,\P,\F,\I, \W\}. \]
We assume here that the local functions all belong to the same function class. This assumption is usually made in the literature but is not necessary in this PEP formulation.
The problem can also have other auxiliary iterates as variables if needed for the analyzed algorithm. Here we have chosen to show auxiliary iterates $y_i^k$ which are the results of the consensus steps \eqref{eq:cons_step} that may be used at each iteration of the algorithm.

The optimal value of problem \eqref{eq:PEP}, denoted $w(\Setting_\N)$, gives by definition a tight worst-case performance bound for the given setting $\Setting_\N$. Moreover, the optimal solution corresponds to an instance of local functions and initial points actually reaching this upper bound, which can provide very relevant information on the bottlenecks faced by the algorithm.
Solving a PEP such as \eqref{eq:PEP} is in general not easy because the problem is inherently infinite-dimensional, as it contains continuous functions $f_i$ among its variables. Nevertheless, Taylor et al. have shown \cite{PEP_Smooth,PEP_composite} that PEPs can be formulated as a finite semidefinite program (SDP) and can thus be solved exactly, for a wide class of \emph{centralized} first-order algorithms and different classes of functions. The reformulation techniques developed for classical optimization algorithms can also be applied for distributed optimization, as detailed in \cite{PEP_dec}. In what follows, we briefly explain how to reformulate the problem \eqref{eq:PEP} into an SDP. One of the differences in PEP for distributed optimization is that the problem must find the worst-case for each of the $\N$ local functions $f_i$ and the $\N$ sequences of local iterates $x_i^0 \dots x_i^\K$ ($i=1,\dots,\N$). 
To render problem \eqref{eq:PEP} finite, for each agent $i$, rather than considering its local function $f_i$ as a whole, we only consider the discrete set $\{\qty(x_i^k,g_i^k,f_i^k)\}_{k\in I = \{0,\dots,\K,*\}}$ of local iterates $x_i^k$ together with their local gradient-vectors $g_i^k$ and local function values $f_i^k$. We then impose interpolation constraints on this set to ensure its consistency with an actual function $f_i \in \F$, in the sense that the set is $\F$-\emph{interpolable}: there exists a function $f_i \in \F$ such that $f_i(x_i^k) = f_i^k$ and $\nabla f_i(x_i^k) = g_i^k$. 
 Such interpolation constraints are provided for many classes of functions in \cite[Section 3]{PEP_composite}, such as the class $L$-smooth and $\mu$-strongly convex functions. 
\begin{proposition}[Interpolation constraints for $\F_{\mu,L}$ \cite{PEP_composite}]
  Let $I$ be a finite index set and $\F_{\mu,L}$ the set of $L$-smooth and $\mu$-strongly convex functions.
  A set of triplets $\{(x_k, g_k, f_k)\}_{k \in I}$ is $\F_{\mu,L}$-interpolable if and only if the following conditions hold for every pair of indices $k \in I$ and $l \in I$:
  \begin{equation} \label{eq:interpmuL} f_k-f_l-g_l^T(x_k-x_l) \ge \frac{1}{2(1-\mu/L)}\qty(\frac{1}{L}\|g_k-g_l\|^2 + \mu \|x_k-x_l\|^2 - 2\frac{\mu}{L}(g_k-g_l)^T(x_k-x_l)).
  \end{equation}
\end{proposition}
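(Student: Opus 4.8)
The plan is to reduce the statement to the smooth convex case and to handle both directions there. The key reduction is that $f \in \F_{\mu,L}$ if and only if the shifted function $h := f - \frac{\mu}{2}\|\cdot\|^2$ is convex with $(L-\mu)$-Lipschitz gradient, i.e. $h \in \F_{0,L-\mu}$; accordingly, $\{(x_k,g_k,f_k)\}_{k\in I}$ is $\F_{\mu,L}$-interpolable if and only if the shifted set $\{(x_k,\tilde g_k,\tilde f_k)\}_{k\in I}$, with $\tilde g_k := g_k - \mu x_k$ and $\tilde f_k := f_k - \frac{\mu}{2}\|x_k\|^2$, is $\F_{0,L-\mu}$-interpolable. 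So it suffices to establish the smooth convex characterization: $\{(x_k,\tilde g_k,\tilde f_k)\}$ is $\F_{0,L-\mu}$-interpolable iff $\tilde f_k - \tilde f_l - \tilde g_l^T(x_k - x_l) \ge \frac{1}{2(L-\mu)}\|\tilde g_k - \tilde g_l\|^2$ for all $k,l \in I$, and then substitute back. The substitution is a routine expansion: the terms $\frac{\mu}{2}(\|x_l\|^2 - \|x_k\|^2)$ together with the $\mu x_l^T(x_k - x_l)$ coming from $\tilde g_l$ collapse into $-\frac{\mu}{2}\|x_k-x_l\|^2$ on the left, while $\|\tilde g_k - \tilde g_l\|^2 = \|g_k - g_l\|^2 - 2\mu(g_k-g_l)^T(x_k-x_l) + \mu^2\|x_k-x_l\|^2$ on the right, and regrouping (using $\frac{1}{2(L-\mu)} = \frac{1}{2L(1-\mu/L)}$) reproduces the stated inequality.

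Necessity in the smooth convex case is the standard co-coercivity of the gradient: for $h \in \F_{0,L-\mu}$ and any $a,b$, one has $h(a) - h(b) - \nabla h(b)^T(a-b) \ge \frac{1}{2(L-\mu)}\|\nabla h(a) - \nabla h(b)\|^2$. I would prove it by fixing $b$, considering the convex function $\phi(w) := h(w) - \nabla h(b)^T w$ (which has $(L-\mu)$-Lipschitz gradient and global minimizer $b$ since $\nabla\phi(b)=0$), and applying the descent lemma at $w = a$ with step $\frac{1}{L-\mu}$ to get $\phi(b) \le \phi(a) - \frac{1}{2(L-\mu)}\|\nabla\phi(a)\|^2$; rearranging and using $\nabla\phi(a)=\nabla h(a)-\nabla h(b)$ gives the inequality. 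Taking $a=x_k$, $b=x_l$ then yields the required inequalities whenever the data comes from a function in $\F_{0,L-\mu}$.

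The main obstacle is sufficiency: from data satisfying all the inequalities, build an explicit $h \in \F_{0,L-\mu}$ interpolating it. I would go through conjugacy. Since $h$ is convex with $(L-\mu)$-Lipschitz gradient iff $h^*$ is $\frac{1}{L-\mu}$-strongly convex, and since a short computation shows that the smooth convex interpolation inequalities for $\{(x_k,\tilde g_k,\tilde f_k)\}$ coincide with the $\frac{1}{L-\mu}$-strongly convex interpolation inequalities for the dual data $\{(\tilde g_k,\, x_k,\, \tilde g_k^T x_k - \tilde f_k)\}$ (points and gradients swapped), it is enough to realize the latter explicitly. A strongly convex function with no smoothness requirement can be taken as a max of quadratics: let $h^*(s) := \max_{k\in I}\big\{(\tilde g_k^T x_k - \tilde f_k) + x_k^T(s - \tilde g_k) + \frac{1}{2(L-\mu)}\|s - \tilde g_k\|^2\big\}$, which is $\frac{1}{L-\mu}$-strongly convex; the interpolation inequalities say precisely that at $s = \tilde g_l$ every term of this max is $\le \tilde g_l^T x_l - \tilde f_l$, with equality for $k=l$, hence $h^*(\tilde g_l) = \tilde g_l^T x_l - \tilde f_l$ and $x_l \in \partial h^*(\tilde g_l)$. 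Letting $h$ be the conjugate of this $h^*$, Fenchel duality gives $h \in \F_{0,L-\mu}$ with $\nabla h(x_l) = \tilde g_l$ and $h(x_l) = \tilde g_l^T x_l - h^*(\tilde g_l) = \tilde f_l$; finally $f := h + \frac{\mu}{2}\|\cdot\|^2 \in \F_{\mu,L}$ interpolates the original triplets. The delicate points are the dual-data reformulation and checking that the explicit $h^*$ really interpolates — that is where the hypotheses are used in full; the case $\mu = 0$ is just $h = f$, and $\mu = L$ (excluded by the factor $1-\mu/L$) corresponds to $f$ being a fixed quadratic and is trivial to verify directly.
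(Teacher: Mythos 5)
Your proof is correct and follows essentially the same route as the proof in the cited reference \cite{PEP_composite} (the paper itself states this proposition without proof, citing that work): minimal-curvature subtraction to reduce to $\F_{0,L-\mu}$, co-coercivity for necessity, and Legendre--Fenchel conjugation with an explicit max-of-quadratics construction for sufficiency. All the reductions and the back-substitution of coefficients check out, so there is nothing to add.
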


Then, all the constraints from \eqref{eq:PEP} can be expressed in terms of $\{y_i^k,x_i^k,g_i^k,f_i^k\}_{k\in I = \{0,\dots,\K,*\}}$. 
When the averaging matrix $W$ is given, i.e. $\W = \{ W \}$, the algorithm constraints simply consist of linear constraints between these variables. 
The optimality condition for \eqref{opt:dec_prob} can be expressed as a linear constraint on the local gradients at $x^*$.
The performance criterion, the initial conditions, and the interpolation constraints are usually quadratic and potentially non-convex in the local iterates and local gradients but they are \new{often} linear in the scalar products of these and in the function values, \new{see \eqref{eq:interpmuL} for example}. \new{In such cases}, letting the decision variables of the PEP be these scalar products and the function values, one can reformulate it as a semi-definite program (SDP) which can be solved efficiently. For this purpose, we define a vector of function values $\f$ and a Gram matrix $G$ that contains scalar products between all vectors, e.g. the local iterates $y_i^k, x_i^k \in \Rvec{d}$ and the local gradient vectors $g_i^k \in \Rvec{d}$.
\begin{align} \label{eq:FGdef}
  \f= [f_i^k]_{i \in \V,~k \in I = \{0,\dots,\K,*\}}  \new{~\in \Rvec{\N(\K+2)}},
   \hspace{1.2cm} G = P^TP \new{~\in \Rmat{3\N(\K+2)}{3\N(\K+2)}}, \text{ with } P = \qty[y_i^k~ x_i^k~ g_i^k]_{i \in \V, k \in I}. \\[-5.5mm]
\end{align} 
By definition, $G$ is symmetric and positive semidefinite.
Moreover, every matrix $G \succeq 0$ corresponds to a matrix $P$ whose number of rows is equal to $\text{rank}~G$. It has been shown in \cite{PEP_composite} that the reformulation of a PEP, such as \eqref{eq:PEP}, with $G$ as variable is lossless provided that we use necessary and sufficient interpolation constraints for $\F$ and that we do not impose the dimension $d$ of the worst-case, and indeed look for the worst-case over all possible dimensions (imposing the dimension would correspond to adding a typically less tractable rank constraint on $G$).
The resulting SDP-PEP thus takes the form of
\begin{align} \label{eq:SDP_PEP}
  w(\Setting_\N) \quad = \hspace{1cm} & \underset{\f, G}{\mathrm{max}} \quad \mc{P}(\f,G) &&  \\
 &\text{ s.t.} \qquad   G \succeq 0, && \\
 & \quad \f,G \quad \text{satisfy} && \hspace{-5cm} \text{algorithm constraints for $\Al$} \\[-1mm]
 &&& \hspace{-5cm} \text{interpolation constraints for $\F$}, \\[-1mm]
 &&& \hspace{-5cm} \text{initial conditions $\I$}, \\[-1mm]
 &&& \hspace{-5cm} \text{optimality condition for \eqref{opt:dec_prob}},
\end{align}
\new{where the objective function $\P$ and the constraints are linear in $\f$ and $G$.}
\new{The performance criterion function $\P$ in \eqref{eq:SDP_PEP} corresponds to the same function as the function $\P$ in \eqref{eq:PEP}, up to a change of variables. We slightly abuse notations to denote these two functions with the same symbol $\P$, for readibility.} 

Such an SDP formulation \eqref{eq:SDP_PEP} is convenient because it can be solved numerically to global optimality, leading to a tight worst-case bound, and it also provides the worst-case solution over all possible problem dimensions.
We refer the reader to \cite{PEP_composite} for more details about the SDP formulation of a PEP, including ways of reducing the size of matrix $G$. However, the dimension of $G$ always depends on the number of iterations $\K$, and on the number of agents $\N$.

From a solution $G$, $\f$ of the SDP formulation, we can construct a solution for the variables $\{y_i^k,x_i^k,g_i^k,f_i^k\}_{i \in \V, k \in I}$, e.g. using the Cholesky decomposition of $G$.
For each agent $i\in\V$, its resulting set of points satisfies the interpolation constraints, so we can construct its corresponding worst-case local function from $\F$ interpolating these points.
Proposition \ref{prop:GramPEP} below states sufficient conditions under which a PEP on a distributed optimization method can be formulated as an SDP. These conditions are satisfied in many PEP settings, allowing to formulate and solve the SDP PEP formulation for any distributed first-order methods from $\Ad$, with many classes of local functions, initial conditions, and performance criteria, see \cite{PEP_composite,PEP_dec,colla2024computer}. The proposition uses the following definition.\smallskip
\begin{definition}[Gram-representable \cite{PEP_composite}] \label{def:Gram-rep}
Consider a Gram matrix $G$ and a vector $\f$, as defined in \eqref{eq:FGdef}. We say that
an expression, such as a constraint or an objective, is linearly (resp. LMI) Gram-representable if it can be expressed using a finite set of linear (resp. LMI) constraints  or expressions involving (part of) $G$ and $\f$.
\end{definition} \smallskip
\begin{proposition}[\hspace{-0.05mm}{\cite[Proposition 2.6]{PEP_composite}}] \label{prop:GramPEP}
  Let $w(\Setting_\N = \{\N,\Al,\K,\P,\F, \I,\W\})$ be the worst-case performance of the execution of $\K$ iterations of distributed algorithm $\Al$ with $\N \ge 2$ agents, with respect to the performance criterion $\P$, valid for any starting point satisfying the set of initial conditions $\I$, any local functions in a given set of functions $\F$ and any averaging matrix in the set of matrices $\W$.
  If $\mc{A},\mc{P},\mc{I}$ and interpolation constraints for $\mc{F}$ and for $\W$ are linearly (or LMI) Gram representable, then the computation of $w(\Setting_\N)$ can be formulated as an SDP, with $G \succeq 0$ and $\f$ as variables, see \eqref{eq:SDP_PEP}.
\end{proposition}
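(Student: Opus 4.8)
The plan is to follow the Gram-matrix reformulation of \cite{PEP_composite}, adapted to the distributed setting, and to establish the equivalence of \eqref{eq:PEP} and \eqref{eq:SDP_PEP} through three reductions: discretize the functional variables, discretize the matrix variable, and pass to the Gram matrix while leaving the dimension free. For the first reduction I would replace each local function $f_i$ in \eqref{eq:PEP} by the finite collection of triplets $\{(x_i^k,g_i^k,f_i^k)\}_{k\in I}$, where $I=\{0,\dots,\K,*\}$ indexes exactly those iterates appearing in the execution of $\Al$, in $\P$ and in $\I$. The point is that none of the objects of the problem depends on $f_i$ otherwise than through its values and gradients at these points; hence maximizing over $f_i\in\F$ together with the induced iterates is the same as maximizing over these triplets subject to their being $\F$-interpolable, which by assumption is captured exactly by the (necessary and sufficient) interpolation constraints for $\F$ (for $\F_{\mu,L}$, the inequalities recalled above). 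This reduction is lossless precisely because interpolability is an ``if and only if'' characterization, so no instances are spuriously added and none are lost.

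Second, I would do the analogous thing for the averaging matrices: a consensus step enters the problem only through the linear relation $\y^k=(W^k\otimes I_d)\x^k$ between the iterates it acts on. The hypothesis that interpolation constraints for $\W$ are (linearly or LMI) Gram-representable means we have a set of conditions on the iterates $X,Y$ equivalent to -- or, in the necessary-condition version of \cite{PEP_dec}, implied by -- the existence of some $W\in\W$ realizing the consensus; replacing the existential quantifier over $W\in\W$ by these conditions removes $W$ from the list of variables. When $\W=\{W\}$ these reduce to the linear equalities $Y=(W\otimes I_d)X$.

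Third, with all functions and matrices eliminated, the remaining decision variables are the scalars $F$ and the vectors collected as columns of $P=[\,y_i^k\ x_i^k\ g_i^k\,]_{i\in\V,k\in I}$, living in $\Rvec{d}$ for an unspecified $d$. By Gram-representability, every constraint and the objective $\mc{P}$ can be written using only $F$ and the inner products $G=P^TP$, i.e.\ they are linear or LMI in $(F,G)$. For a fixed $d$ this would be an SDP with the extra constraint $\mathrm{rank}\,G\le d$, but $\Setting_\N$ asks for the worst case over all dimensions, and every symmetric $G\succeq 0$ factors as $G=P^TP$ with $P$ having $\mathrm{rank}\,G$ rows, so the rank constraint is vacuous and can be dropped. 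What remains is exactly \eqref{eq:SDP_PEP}, and since each reduction preserves both the feasible set and the objective value, its optimal value equals $w(\Setting_\N)$.

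The main obstacle is bookkeeping rather than technique: one must check that the problem genuinely sees $f_i$ and $W$ only through the sampled triplets and the consensus relations -- that no constraint implicitly refers to $f_i$ at an unsampled point or to $W$ beyond its action on the listed iterates -- so that the discretization is exact; for algorithms in $\Ad$ this holds by construction, since every operation (gradient, consensus, linear combination) touches only finitely many named iterates. Granting this, the only substantive ingredients are the necessary-and-sufficient interpolation theorems for $\F$ and the dimension-free factorization $G\succeq 0\Leftrightarrow G=P^TP$, both already established; the distributed case differs from the centralized one only in that the triplet sets and consensus relations carry an agent index $i$ in addition to the iteration index $k$, which leaves the argument unchanged.
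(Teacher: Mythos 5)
Your proposal is correct and follows essentially the same route as the paper, which itself only sketches this argument in the text preceding the proposition (discretizing each $f_i$ into interpolable triplets, replacing the quantifier over $W\in\W$ by Gram-representable interpolation constraints, and passing to the dimension-free Gram matrix $G=P^TP$) and otherwise defers to \cite[Proposition 2.6]{PEP_composite}. Your added care about the necessary-versus-sufficient distinction for the constraints on $\W$ is consistent with the paper's own caveat that using only necessary conditions (as for $\Wcl{\lm}{\lp}$) yields a relaxation rather than the exact worst case.
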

\begin{remark} 
In \cite{PEP_composite}, Definition \ref{def:Gram-rep} and Proposition \ref{prop:GramPEP} were only formulated for linearly Gram-representable constraints, but their extensions to LMI Gram-representable constraints are direct and were already introduced in \cite{PEP_dec} in which LMI constraints are used to represent consensus steps in PEP. 
\end{remark} \smallskip

As briefly mentioned, when the set of possible averaging matrices $\W$ contains only one given matrix $W$, i.e. we know the averaging matrix used, the PEP framework presented here is complete and provides worst-case bounds specific to this value of $W$. However, the literature on distributed optimization generally provides bounds on the performance of an algorithm that are valid over a larger set of averaging matrices $\W$, typically based on its spectral properties, see \cite{DGD} for a survey. Such bounds allow better characterization of the general behavior of the algorithm and can be applied in a wider range of settings. Obtaining such bounds with PEP requires having a tractable representation of the set of all pairs of variables $\{(\x^k, \y^k)\}$ that can be involved in consensus steps with the same matrix from the given matrix class. 
Our previous work \cite{PEP_dec} proposes and uses necessary constraints for describing the set of symmetric and doubly-stochastic matrices with a given range on the non-principal eigenvalues. The next section describes these constraints and analyzes their sufficiency.

\subsection{Representation of consensus steps in PEP} \label{sec:consensusPEP}
In PEP for a decentralized optimization algorithm $\Al \in \Ad$, with $\N \ge 2$ agents, we would like to find Gram-representable constraints to represent all the pairs of points $\{ \x^k, \y^k \}$ such that
\begin{align}
  \y^k = (W \kron I_d) \x^k, \quad \text{for $k=1,\dots,\K$}, \qquad \text{with }~ W \in \Wcl{\lm}{\lp}, \label{eq:cons_1}
\end{align}
where $\Wcl{\lm}{\lp}$ is the following set of symmetric averaging matrices, with fixed bounds $\lm, \lp \in (-1,1)$ on the eigenvalues (except for $\lam_1=1$):
\begin{equation} \label{eq:Wcl}
  \hspace{-3mm} \Wcl{\lm}{\lp} = 
  \left\{\!\begin{aligned}
      &W^T = W  \\
      W \in \Rmat{\N}{\N}: \quad &\lam_1(W) = 1,~ v_1(W) = \mathbf{1}/{\scriptstyle \sqrt{\N}} \\
      &\lm \le \lam_\N(W) \le \dots \le \lam_2(W) \le \lp
  \end{aligned} \right\}.
\end{equation}
Such a set is often used to derive theoretical bounds on distributed algorithms \cite{DGD}. You may notice that it is not restricted to non-negative matrices, even if the literature often assumes non-negativity through doubly-stochasticity. 
We motivate this choice by two arguments. Firstly, non-negativity is not needed for the convergence of pure consensus steps \cite{xiaoboyd2004fast}. Secondly, among results using doubly-stochasticity, we found no results exploiting the non-negativity of the matrix, see e.g. \cite{DGD, EXTRA, NIDS}. Such results are thus about \emph{generalized doubly-stochastic} matrices \cite{gds}, which refers to matrices whose rows and columns sum to one, as the ones contained in $\Wcl{\lm}{\lp}$.
 
A set of pairs of points $\{(\x^k,\y^k)\}$ satisfying the consensus steps constraints \eqref{eq:cons_1} is called $\Wcl{\lm}{\lp}$-interpolable.
\begin{definition}[$\Wcl{\lm}{\lp}$-interpolability] \label{def:Winterp} Let $I$ be a finite set of indices. A set of pairs $\{(\x^k,\y^k)\}_{k\in I}$ is $\Wcl{\lm}{\lp}$-interpolable if, \vspace{-1mm}
  \begin{align}
      \exists W\in \Wcl{\lm}{\lp}~:~ \y^k = (W \kron I_d) \x^k \quad \text{ for all $k \in I.$} \\[-7mm]
  \end{align}
\end{definition}
\noindent In this definition, the pairs of points $\{(\x^k,\y^k)\}_{k\in I}$ are all linked by the same averaging matrix $W \in \Wcl{\lm}{\lp}$. Thus, settings or algorithms using different averaging matrices work with different sets of pairs of points. \smallskip


This section first shows why there is no Gram-representable constraints to characterize $\Wcl{\lm}{\lp}$-interpolability of a set of pairs of points $\{ \x^k, \y^k \}$ involved in consensus steps of the form of \eqref{eq:cons_1}, preventing its tight representation in an SDP PEP formulation. Then we present a relaxed Gram-representable description of $\Wcl{\lm}{\lp}$-interpolable pairs of points and characterize the points allowed in this relaxed description. Finally, we also explain how it can be used to build the \textit{spectral agent-dependent PEP formulation}, which provides worst-case guarantees for a decentralized algorithm, that are valid for any averaging matrix from $\Wcl{\lm}{\lp}$. 

\subsubsection{The intractable quest for tightness} 
It would be ideal to find necessary and sufficient conditions for a set of pairs $\{(\x^k,\y^k)\}$ to be $\Wcl{\lm}{\lp}$-interpolable, see Definition \ref{def:Winterp}. Moreover, the conditions should be Gram-representable, i.e., linear in the scalar products of $\x_i$ and $\y_i$, to be able to use them in an SDP PEP formulation (see Proposition \ref{prop:GramPEP}).
In this subsection, we will show, by a non-convexity argument, that there are no tight conditions for $\Wcl{\lm}{\lp}$-interpolability that are Gram-representable.
\begin{theorem} \label{thm:noGram}
  There is no Gram-representable necessary and sufficient conditions for $\Wcl{\lm}{\lp}$-interpolability of a set of  pairs of variables
  $\{(\x^k,\y^k)\}_{k\in I}$. 
\end{theorem}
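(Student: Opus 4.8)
The plan is to exhibit two sets of consensus pairs that are each individually $\Wcl{\lm}{\lp}$-interpolable, but whose ``average'' (a convex combination of their Gram matrices, realized by stacking the two instances in orthogonal subspaces) is not. Since any Gram-representable condition is, by definition, a conjunction of linear and LMI constraints on $(G,F)$ — and the feasible set of such constraints is convex — a necessary and sufficient Gram-representable characterization of $\Wcl{\lm}{\lp}$-interpolability would force the set of interpolable Gram matrices to be convex. Producing a counterexample to that convexity therefore proves the theorem.

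First I would reduce to the simplest nontrivial instance: a single consensus pair $\{(\x,\y)\}$ with $\N=2$, so that $\Wcl{\lm}{\lp}$-interpolability means $\y = (W\kron I_d)\x$ for some symmetric $W$ with eigenvalues $1$ (eigenvector $\mathbf 1/\sqrt2$) and $\lambda_2(W)\in[\lm,\lp]$. Writing $\x = \bar x\,\mathbf 1 \oplus (\text{disagreement part})$, the action of $W$ fixes the consensus component and multiplies the disagreement component by $\lambda_2$. So the pair is interpolable iff $\y$ has the same consensus component as $\x$ and its disagreement component equals $\lambda_2$ times that of $\x$ for some admissible $\lambda_2$ — a condition that, in scalar-product (Gram) terms, pins $\langle \y_{\perp},\y_{\perp}\rangle = \lambda_2^2 \langle \x_{\perp},\x_{\perp}\rangle$ and $\langle \x_{\perp},\y_{\perp}\rangle = \lambda_2 \langle \x_{\perp},\x_{\perp}\rangle$ with the \emph{same} $\lambda_2\in[\lm,\lp]$. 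The key point is the sign coupling: $\langle\x_\perp,\y_\perp\rangle$ and the ratio $\langle\y_\perp,\y_\perp\rangle/\langle\x_\perp,\x_\perp\rangle$ must be consistent about the sign of $\lambda_2$.

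The main step — and the main obstacle — is then to build the explicit counterexample. I would take instance $A$ to use $\lambda_2 = \lp > 0$ in one coordinate and instance $B$ to use $\lambda_2 = \lm < 0$ in a separate coordinate (choosing $\lm<0<\lp$, permissible since $\lm,\lp\in(-1,1)$), each with a disagreement part of equal norm. Each is trivially interpolable. Their superposition has Gram matrix $G = \tfrac12 G_A + \tfrac12 G_B$ up to scaling, for which $\langle\x_\perp,\x_\perp\rangle$ and $\langle\y_\perp,\y_\perp\rangle$ are positive and comparable, but $\langle\x_\perp,\y_\perp\rangle = \tfrac12(\lp+\lm)\langle\x_\perp,\x_\perp\rangle$ can be made to vanish or take a sign incompatible with $\langle\y_\perp,\y_\perp\rangle = \tfrac12(\lp^2+\lm^2)\langle\x_\perp,\x_\perp\rangle>0$ under the single-$\lambda_2$ constraint above — in particular, no single symmetric $W$ with one non-principal eigenvalue of a fixed sign can reproduce both scalar products. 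I would verify carefully that this combined data genuinely fails to be realizable by \emph{any} $W\in\Wcl{\lm}{\lp}$ (not merely by the two matrices used), which is where the eigenvalue analysis of the previous paragraph does the work. Finally, I would note that the same construction lifts to any $\N\ge 2$ by acting trivially on the extra agents, and conclude: a convex (Gram-representable) condition cannot separate the interpolable $G_A, G_B$ from their midpoint $G$, so no such necessary-and-sufficient condition exists.
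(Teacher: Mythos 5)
Your strategy is the same as the paper's: observe that any necessary-and-sufficient Gram-representable description would force the set of interpolable Gram matrices to be convex, then break convexity by averaging two interpolable instances built from the two extreme eigenvalues $\lm$ and $\lp$. The paper's Lemma~\ref{lem:nonconvex} does this with a rank-one vector $\x=(\N,0,\dots,0)^T$ and the two matrices whose non-principal eigenvalues are all $\lm$ (resp.\ all $\lp$), and certifies non-interpolability of the midpoint by a rank argument ($\Px^T\Px$ has rank one, so $\Py^T\Py=W\Px^T\Px W$ would have rank at most one, yet the averaged block has rank two); you certify it by the inconsistency of the two scalar-product identities $\langle \x_\perp,\y_\perp\rangle=\lambda_2\|\x_\perp\|^2$ and $\|\y_\perp\|^2=\lambda_2^2\|\x_\perp\|^2$ for a single $\lambda_2$. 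Both verifications are correct and of comparable difficulty. Two points to tighten. First, drop the restriction $\lm<0<\lp$: the parameters $\lm,\lp$ are given, not chosen, so your proof as stated would not cover e.g.\ $0\le\lm<\lp$; but the restriction is unnecessary, since your own two identities force $\bigl(\tfrac{\lm+\lp}{2}\bigr)^2=\tfrac{\lm^2+\lp^2}{2}$, which fails strictly whenever $\lm\ne\lp$ --- the sign discussion is a red herring. Second, the lift to $\N>2$ is not automatic from your $\N=2$ analysis: for $\N>2$ the disagreement block of $W$ is a genuine symmetric matrix with spectrum in $[\lm,\lp]$ rather than a scalar, so the single-$\lambda_2$ characterization no longer applies. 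The counterexample still works, but you should verify it directly on a realization (the combined data forces $We=\lp e$ and $We=\lm e$ simultaneously for $e=(1,-1,0,\dots,0)^T$), after noting that $\Wcl{\lm}{\lp}$-interpolability depends only on the Gram matrix because any two realizations of the same Gram matrix differ by a common linear isometry.
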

The proof of this theorem relies on a lemma showing the non-convexity of the set of Gram matrices we are considering. To define this set properly, 
we consider matrices $\Px,\Py \in \Rmat{d}{\N \K}$ aggregating consensus iterates of $\N$ agents over $\K$ consensus steps. 
\begin{definition}[$\G{\lm}{\lp}$] \label{def:Gram}
  We define $\G{\lm}{\lp}$ as the set of symmetric and positive semidefinite Gram matrices of the form \vspace{-3mm}
  \begin{align} \label{eq:Gram}
    G_c = \begin{bmatrix} \Px^T\Px & \Px^T\Py \\ \Py^T\Px & \Py^T\Py \end{bmatrix}, \quad \parbox[c]{7.5cm}{
     where \quad $\Px = [\overbrace{x_1^0 \dots x_\N^0}^{\x^0} \dots x_1^\K \dots x_\N^\K ] \in \Rmat{d}{\N \K},$ \\
     \hspace*{1.27cm} $\Py = [\underbrace{y_1^0 \dots y_\N^0}_{\y^0} \dots y_1^\K \dots y_\N^\K ]  \in \Rmat{d}{\N \K},$
    } \\[-8mm]
\end{align}
such that the set of pairs $\{(\x^k,\y^k)\}_{k=1,\dots,\K}$ is $\Wcl{\lm}{\lp}$-interpolable, in the sense of Definition \ref{def:Winterp}.
\end{definition}
In practice, in the PEPs, the Gram matrices can involve other variables than those of the consensus (e.g., gradients). We can therefore consider $G_c$ as a submatrix of the full Gram matrix, which contains only the scalar products related to consensus steps. \smallskip

\begin{lemma} \label{lem:nonconvex} Let $\lm < \lp \in (-1,1)$. The set $\G{\lm}{\lp}$ is non-convex. \vspace{-2mm}
\end{lemma}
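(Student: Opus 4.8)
The plan is to exhibit two averaging matrices $W_1, W_2 \in \Wcl{\lm}{\lp}$ together with a single consensus pair $(\x, \y)$ realized by each of them, in such a way that the averaged Gram matrix $\tfrac{1}{2}(G_{c,1} + G_{c,2})$ is \emph{not} in $\G{\lm}{\lp}$. Since membership in $\G{\lm}{\lp}$ requires the existence of a single matrix $W \in \Wcl{\lm}{\lp}$ that simultaneously realizes \emph{all} the consensus pairs encoded in the Gram matrix, the obstruction is that no such $W$ can reproduce the averaged scalar products. The cleanest way to build the counterexample is to work in dimension $d = 1$ (or a small dimension) with $\N = 2$ agents and a single consensus step ($\K = 1$), so that $\Px, \Py$ are just short vectors and the Gram matrix is small.

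Concretely, I would take $\N = 2$ and pick the eigenvalues $\lm$ and $\lp$ themselves as the two realizable ``consensus gains.'' A symmetric averaging matrix on two agents with principal eigenvector $\mathbf{1}/\sqrt{2}$ is of the form $W(\lambda) = \tfrac{1}{2}\begin{bmatrix} 1+\lambda & 1-\lambda \\ 1-\lambda & 1+\lambda\end{bmatrix}$, and $W(\lambda) \in \Wcl{\lm}{\lp}$ exactly when $\lambda \in [\lm, \lp]$. Choose an input $\x$ orthogonal to $\mathbf{1}$, say $\x = (1,-1)^T$ (so $d=1$); then $W(\lambda)\x = \lambda\, \x$. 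Set $W_1 = W(\lm)$ and $W_2 = W(\lp)$, giving $\y_1 = \lm\,\x$ and $\y_2 = \lp\,\x$. The associated Gram matrices $G_{c,j}$ have $\Px^T\Px = \|\x\|^2 = 2$, $\Px^T\Py = \lambda_j \|\x\|^2$, $\Py^T\Py = \lambda_j^2 \|\x\|^2$. The average has $\Px^T\Px = 2$, $\Px^T\Py = (\lm+\lp)$, and $\Py^T\Py = (\lm^2 + \lp^2)$. For this averaged Gram matrix to lie in $\G{\lm}{\lp}$, there would have to exist $W \in \Wcl{\lm}{\lp}$ and vectors $\x', \y'$ with the same Gram matrix and $\y' = (W\kron I_d)\x'$; on the component of $\x'$ orthogonal to $\mathbf 1$ the matrix $W$ acts with some gain $\mu \in [\lm, \lp]$ (possibly together with higher-dimensional structure), forcing $\Px^T\Py / \Px^T\Px = \mu$ and $\Py^T\Py/\Px^T\Px = \mu^2$ on that component — but the averaged data gives ratio $(\lm+\lp)/2$ for the first and $(\lm^2+\lp^2)/2$ for the second, and these are inconsistent since $\big(\tfrac{\lm+\lp}{2}\big)^2 < \tfrac{\lm^2+\lp^2}{2}$ whenever $\lm \neq \lp$ (strict convexity of $t \mapsto t^2$). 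I would need to state this last step carefully: because $\x$ is orthogonal to $\mathbf{1}$, and $W$ is symmetric with $v_1(W) = \mathbf{1}/\sqrt\N$, the image $\y = (W\kron I_d)\x$ stays in the orthogonal complement and the Cauchy–Schwarz / eigenvalue-ratio argument applies to the whole vectors, not merely a component.

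The main obstacle is making the ``no single $W$ can realize the averaged Gram matrix'' step airtight in general dimension $d$ and with the Kronecker structure $W \kron I_d$: one must argue that $\Py^T\Py = (W\kron I_d)\Px)^T((W\kron I_d)\Px)$ together with $\Px$ orthogonal (columnwise) to $\mathbf{1}$ forces $\|\y\|^2 \le \lp^2 \|\x\|^2$ and, more sharply, that $\langle \x, \y\rangle = \x^T (W \kron I_d)\x \le \lp \|\x\|^2$ with $\|\y\|^2 \ge (\langle \x,\y\rangle / \|\x\|)^2 \cdot$ (something), so that the pair $(\langle \x, \y \rangle, \|\y\|^2)$ cannot equal $(\tfrac{\lm+\lp}{2}\|\x\|^2, \tfrac{\lm^2+\lp^2}{2}\|\x\|^2)$. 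A convenient shortcut is to restrict to $d=1$ and a single consensus step from the start, in which case $\x, \y \in \Rvec{2}$ are plain vectors, $\x \perp \mathbf 1$ gives $\x = c(1,-1)^T$, $\y = W\x = \lambda \x$ for the unique off-principal eigenvalue $\lambda$, and then $\langle \x, \y\rangle = \lambda\|\x\|^2$, $\|\y\|^2 = \lambda^2\|\x\|^2$ exactly, so any Gram matrix in $\G{\lm}{\lp}$ built from such a pair satisfies the rigid identity $\|\y\|^2 \cdot \|\x\|^2 = \langle\x,\y\rangle^2$ — a rank-one condition that is manifestly destroyed by averaging two instances with distinct $\lambda$. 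This reduces the whole lemma to the one-line observation that the set $\{(a, b, b^2/a) : a > 0,\ b/a \in [\lm,\lp]\} \cup \{\text{degenerate cases}\}$ is not convex, which is immediate from the strict convexity of squaring. I would present the argument in this reduced $d=1$, $\N=2$, $\K=1$ form, then remark that embedding it into larger $\N$, $d$, or $\K$ (padding with zero/consensus-fixed coordinates) shows non-convexity of $\G{\lm}{\lp}$ in all the relevant cases.
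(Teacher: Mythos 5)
Your construction is correct, but it takes a genuinely different route from the paper's. The paper also averages two rank-one instances with matrices whose non-principal spectra are $\{\lm,\dots,\lm\}$ and $\{\lp,\dots,\lp\}$, but it feeds in $\x = (\N,0,\dots,0)^T$, which is \emph{not} orthogonal to $\mathbf{1}$; then $\y_1$ and $\y_2$ are linearly independent, the averaged block $\tfrac12(\y_1\y_1^T+\y_2\y_2^T)$ has rank $2$, while any realization $\Py^T\Py = W\Px^T\Px W$ inherits rank $\le 1$ from $\Px^T\Px = \x\x^T$ --- a pure rank-counting contradiction that needs no quantitative tracking. By choosing $\x \perp \mathbf{1}$ you make $\y_1,\y_2$ parallel, so that rank argument is unavailable to you, and you replace it with the strict-convexity obstruction $\bigl(\tfrac{\lm+\lp}{2}\bigr)^2 < \tfrac{\lm^2+\lp^2}{2}$. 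That works, but it forces you to carry the extra step you correctly flag: you must show that \emph{any} realization $(\Px',\Py')$ of the averaged Gram matrix, in \emph{any} dimension $d_3$ (membership in $\G{\lm}{\lp}$ quantifies over all dimensions, so you cannot simply ``restrict to $d=1$'' for the candidate realization), has $\x'$ pinned to the consensus-orthogonal eigenstructure. The closure is exactly as you anticipate: $\Px'^T\Px' = ww^T$ with $w=(1,-1)^T$ rank one forces $\Px' = v\,w^T$ by the equality case of Cauchy--Schwarz, hence $\x' = w \kron v \in \Cp$; for $\N=2$ the restriction of $W\kron I_d$ to $\Cp$ is the scalar $\lam_2(W)$, so $\y' = \lam_2(W)\x'$ exactly and the two moment conditions force the contradiction. (For embeddings into $\N>2$ one additionally uses that $(W\kron I_d)(w\kron v) = (Ww)\kron v$ keeps every agent component parallel to $v$, so the $\Py'^T\Py' \propto ww^T$ block still forces $Ww = \beta w$ for a scalar $\beta$.) Net comparison: the paper's argument is more robust --- a dimension count immune to the exact values --- whereas yours isolates a sharper second-moment obstruction and lives in the minimal instance $\N=2$, at the price of the eigenspace-forcing step. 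Both are valid proofs of the lemma.
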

\begin{proof} 
We build two Gram matrices $G_1, G_2 \in \G{\lm}{\lp}$ and show that their combination $G_3 = \frac{1}{2}( G_1 + G_2)$ is not in $\G{\lm}{\lp}$. We build a counter-example for one consensus step, i.e., for $\K=1$, and then we explain why it can also be used for $\K > 1$. In the set $\G{\lm}{\lp}$, there is no constraint on the rank $d$ of the matrices, so we choose to build $G_1$ and $G_2$ with rank $d=1$, as follows
\begin{align} \label{eq:defxW}
  &\text{let } &
  \x &= \small{\begin{bmatrix}
    \N & 0 & \dots & 0
    \end{bmatrix}^T} \in \Rvec{\N},  \\ 
  &&[W_1]_{ij} &= \begin{cases} 
      \frac{1+(\N-1)\lm}{\N}  & \text{for $i=j$} \\ 
      \frac{1-\lm}{\N}        & \text{for $i\ne j$}
    \end{cases}, & 
    [W_2]_{ij} &= \begin{cases} 
      \frac{1+(\N-1)\lp}{\N}  & \text{for $i=j$} \\ 
      \frac{1-\lp}{\N}        & \text{for $i\ne j$}
    \end{cases}, \\
   &\text{and } &  \y_1 &=  W_1 \x = 
    \footnotesize{\begin{bmatrix} 
      1+(\N-1)\lm \\ 
      1-\lm \\[-1mm] \vdots \\ 1-\lm 
    \end{bmatrix}}, & 
    &\hspace{-10mm} \y_2 =  W_2 \x =  
    \footnotesize{\begin{bmatrix} 
      1+(\N-1)\lp \\ 
      1-\lp \\[-1mm] \vdots \\ 1-\lp 
    \end{bmatrix}}, \label{eq:defy}
  \end{align}
  we build $G_1$ and $G_2$ as 
  \begin{align}
    G_1 &= \begin{bmatrix} \x\x^T & \x\y_1^T \\ \y_1\x^T & \y_1\y_1^T \end{bmatrix}, \quad
    &  G_2 &= \begin{bmatrix} \x\x^T & \x\y_2^T \\ \y_2\x^T & \y_2\y_2^T \end{bmatrix}. \label{eq:defG}
\end{align}
\sloppy The eigenvalues of $W_1$ are $\{\lm,\dots,\lm, 1\}$, and those of $W_2$ are $\{\lp, \dots, \lp, 1\}$, so that $W_1, W_2 \in \Wcl{\lm}{\lp}$. Therefore, by construction, both pairs $(\x,\y_1)$ and $(\x,\y_2)$ are $\Wcl{\lm}{\lp}$-interpolable, and so $G_1$ and $G_2$ are in $\G{\lm}{\lp}$, and can be combined as $G_3 = \frac{1}{2}( G_1 + G_2)$,
\begin{equation} 
  G_3 =  \begin{bmatrix} \x\x^T & \frac{1}{2}\x(\y_{1} + \y_{2})^T \\
  \frac{1}{2}(\y_{1} + \y_{2})\x^T & \frac{1}{2}(\y_{1}\y_{1}^T + \y_{2}\y_{2}^T)
  \end{bmatrix} = \begin{bmatrix} \Pxi{3}^T\Pxi{3} & \Pxi{3}^T\Pyi{3} \\ \Pyi{3}^T\Pxi{3} & \Pyi{3}^T\Pyi{3}
  \end{bmatrix}.\label{eq:combG}
  \end{equation}
We show that, for any dimension $d_3 > 0$, there is no $\Pxi{3}, \Pyi{3} \in \Rmat{d_3}{\N}$ satisfying \eqref{eq:combG} and such that $G_3 \in \G{\lm}{\lp}$. Let us proceed by contradiction. Suppose there is some $\Pxi{3}, \Pyi{3}$ satisfying \eqref{eq:combG} and 
\begin{equation} \label{eq:proofnonconvex}
  \Pyi{3}^T = W_3 \Pxi{3}^T, \quad \text{for $W_3 \in \Wcl{\lm}{\lp}$}.
\end{equation}
Then, the block $\Pxi{3}^T\Pxi{3} = \x\x^T$ has rank one, because $\x \in \Rvec{n}$, and thus the block $\Pyi{3}^T\Pyi{3}$, which can be expressed as $\Pyi{3}^T\Pyi{3} = W_3 \Pxi{3}^T \Pxi{3} W_3$ using \eqref{eq:proofnonconvex}, also has rank one\footnote{This is due to $\mathrm{rank}(AB) \le \min(\mathrm{rank}(A), \mathrm{rank}(B))$.}.
However, the block is also defined as $\Pyi{3}^T\Pyi{3} = \frac{1}{2}(\y_{1}\y_{1}^T+ \y_{2}\y_{2}^T)$ from \eqref{eq:combG}, which has rank 2 for any $\lm \ne \lp \in (-1,1)$, because vectors $\y_1$ and $\y_2$ \eqref{eq:defy} are linearly independent in that case.
By this contradiction argument, we have proved that $ G_3 = \frac{1}{2}( G_1 + G_2)$ is not in $\G{\lm}{\lp}$, meaning that this set is not convex for $\K=1$. 

When we consider multiple consensus steps using the same matrix, i.e., when $\K > 1$, the counter-example developed above for $\K=1$ can still be chosen for one of the steps, and the Gram matrices $G_1$, $G_2$, and $G_3$ would then be submatrices of the Gram matrices in $\G{\lm}{\lp}$, showing the non-convexity of $\G{\lm}{\lp}$ for $\K > 1$ as well. \vspace{-1mm} 
\end{proof}

\begin{proof}[Proof of Theorem \ref{thm:noGram}]
\new{  By definition, Gram-representable constraints define a convex subset of symmetric and semidefinite Gram-matrices. However, by Lemma \ref{lem:nonconvex}, we know that set $\G{\lm}{\lp}$ of Gram-matrices such that the pairs of points $\{\x^k,\y^k\}$ are $\Wcl{\lm}{\lp}$-interpolable is a non-convex. Therefore, we know that there exists no Gram-representable constraints that tightly describe all the points $\{\x^k,\y^k\}$ that are $\Wcl{\lm}{\lp}$-interpolable.}
\end{proof}

\subsubsection{A relaxed formulation} \label{sec:PEPrelax} 
In previous work \cite{PEP_dec}, necessary constraints have been used to represent the effect of consensus steps in PEPs. These constraints describe a convex set of Gram matrices that contains the non-convex set $\G{\lm}{\lp}$, and can therefore be exploited in PEP to derive upper bounds on the worst-case performance of distributed algorithms. However, the precise meaning of these constraints was not known. In this subsection, we show that these constraints correspond to tight interpolation constraints for a larger class of steps, containing the consensus step as a particular case.

\paragraph*{Generalized consensus steps} \new{The consensus steps \eqref{eq:cons_1} can be seen as a linear operation on vectors $\x^k \in \Rvec{\N d}$, with a matrix $M \in \Rmat{\N d}{\N d}$ that has a specific Kronecker structure: 
\begin{align} 
  \y^k = M \x^k, \quad \text{for $k=1,\dots,\K$} \qquad \text{with }~ M = W \kron I_d ~\text{ and }~ W \in \Wcl{\lm}{\lp}. 
\end{align}
By definition, matrix $M$ has the same eigenvalues as $W \in \Wcl{\lm}{\lp}$ \eqref{eq:Wcl}, with multiplicity $d$ times larger. Therefore, it has $d$ eigenvalues equal to $1$, and their associated eigenvectors form a basis of the consensus subspace~$\C$.
\begin{definition}[Consensus subspace] \label{def:cons_space}
  The consensus subspace of $\Rvec{\N d}$ is denoted $\C$ and is defined as \vspace{-1mm}
  \begin{align} 
    \C = \left\{ \x \in \Rvec{\N d} ~|~ x_1 = \dots = x_\N \in \Rvec{d} \right\}. \\[-7mm]
  \end{align}
  The dimension of $\C$ is $|\C| = d$.
  The orthogonal complement of $\C$ is denoted $\Cp$ and has dimension $|\Cp| = (\N-1)d$:\vspace{-1mm} 
  \begin{align} 
    \Cp = \left\{ \x \in \Rvec{\N d} ~|~ \x^T\y = 0, \text{ for all } \y \in \C  \right\}. \\[-7mm]
  \end{align}
\end{definition}
We consider generalized consensus steps where the Kronecker structure of $M$ is relaxed, so that the vectors $\x^k$ are multiplied by a full matrix with similar properties: \vspace{-1mm}
\begin{align}
  \y^k = M \x^k, \quad \text{for $k=1,\dots,\K$}, \qquad \text{with }~ M \in \Mcl{\lm}{\lp}, \label{eq:gen_cons} \\[-7mm]
\end{align}
where $\Mcl{\lm}{\lp}$ is a set of symmetric matrices that have the same properties as $W \kron I_d$ with $W \in \Wcl{\lm}{\lp}$, but without constraint on their structure. 
In particular, matrices in $\Mcl{\lm}{\lp}$ are still symmetric, generalized doubly stochastic and with the same range of eigenvalues. The set $\Mcl{\lm}{\lp}$ is formally defined as follows, for a given range of eigenvalue $\lm, \lp \in (-1,1)$, $\lm \le \lp$:
\begin{align} \label{eq:Mcl}
  \Mcl{\lm}{\lp} = 
  \left\{ \! \begin{aligned}
    &M^T = M \\
    ~ M \in \Rmat{\N d}{\N d}: \quad&\lam_{1}(M) = \dots = \lam_{|\C|}(M) = 1,\\[-2mm]
    &\text{\small{with eigenvectors forming a basis of $\C$}} \\
    &\lm \le \lam_{\N d}(M) \le \dots \le \lam_{|\C|+1}(M) \le \lp
    \end{aligned}
   \right\},
\end{align}
We say that a matrix $M \in \Mcl{\lm}{\lp}$ preserves the consensus subspace $\C$ as we have $M \mathbf{v} = \mathbf{v}$ for any $\mathbf{v} \in \C$. 
Relaxation of the Kronecker structure in a consensus step allows different weights to be applied to each component of the agent vectors, and different components to be mixed. \vspace{-1mm} }

\paragraph*{Interpolation constraints for $\Mcl{\lm}{\lp}$ \vspace{-0.5mm}}
We provide tight interpolation constraints on a set of pairs of points $\{\x^k,\y^k\}$ for such generalized consensus steps \eqref{eq:gen_cons}, hence providing necessary constraints for describing classical consensus steps \eqref{eq:cons_1}.
Interpolation constraints are necessary and sufficient conditions for a set of pairs of points $\{\x^k,\y^k\}$ to be interpolable by a matrix $M \in \Mcl{\lm}{\lp}$.
\begin{definition}[$\Mcl{\lm}{\lp}$-interpolability]\label{def:Minterp} Let $I$ be a finite set of indices. A set of pairs of points $\{(\x^k,\y^k)\}_{k\in I}$ is $\Mcl{\lm}{\lp}$-interpolable if, \vspace*{-1mm}
    \begin{align}
        \exists M\in \Mcl{\lm}{\lp}~:~ \y^k = M \x^k \quad \text{ for all $k \in I$.} \\[-7mm]
    \end{align}
\end{definition}
This definition is related to the interpolation of a symmetric linear operator, developed in \cite{bousselmi2023interpolation}. In this work, we focus on linear operators preserving a given subspace, namely the consensus subspace $\C$. 
Based on results from \cite{PEP_dec} and \cite{bousselmi2023interpolation}, we give necessary and sufficient conditions to have $\Mcl{\lm}{\lp}$-interpolability of a set of pairs of points.
\new{Before stating these interpolation conditions, we first introduce a few notions. We aggregate all the vectors $\{(\x^k,\y^k)\}_{k\in I}$ in matrices $X, Y \in \Rmat{\N d}{\K}$: \vspace{-1mm} 
\begin{align} 
  X = [\x^1 \dots \x^\K] \quad \text{ and } \quad Y = [\y^1 \dots \y^\K] \\[-7mm]
\end{align}}
\new{We also consider the decomposition of matrices $X$ and $Y$ in two orthogonal terms: \vspace{-1mm}
\begin{align} 
   X = \Xb + \Xp \text{ and } Y = \Yb + \Yp, \\[-7mm]
\end{align}
where $\Xb, \Yb \in \Rmat{\N d}{\K}$ and $\Yp, \Xp \in \Rmat{\N d}{\K}$ are obtained by projecting each column of $X$ and $Y$ respectively on the consensus subspace $\C$ and on its orthogonal subspace $\Cp$. 
This decomposition allows decoupling the effect of the generalized consensus steps \eqref{eq:gen_cons} as follows: \vspace*{-1mm}
\begin{align}
  Y = MX = M(\Xb+\Xp) = \Xb + M\Xp  \quad \Leftrightarrow \quad \Yb = \Xb ~\text{ and }~ \Yp = M\Xp,
\end{align}
where $M \Xb = \Xb$ because $\Xb \in \C$ and $M \in \Mcl{\lm}{\lp}$.}

\begin{theorem}[$\Mcl{\lm}{\lp}$ interpolation constraints] \label{thm:sufficiency}
Let $I$ be a set of $\K$ indices and $\lm,\lp \in (-1,1)$, $\lm\le\lp$.
A set of pairs of points \new{$\{(\x^k,\y^k)\}_{k\in I}$ is $\Mcl{\lm}{\lp}$-interpolable} if, and only if, \vspace{-3.5mm}
 \begin{align}
    \Xb &= \Yb, \label{eq:subset_pres} \\
    (\Yp-\lm\Xp)^T(\Yp-\lp\Xp) & \preceq 0, \label{eq:SDP_cons} \\
    \Xp^T\Yp &= \Yp^T\Xp, \label{eq:sym_cons}
 \end{align}
 where $\Xb, \Yb \in \Rmat{\N d}{\K}$ contains columns of $X$ and $Y$ projected onto the subspace \new{$\C$}, and $\Xp, \Yp \in \Rmat{\N d}{\K}$ the columns projected on its orthogonal complement \new{$\Cp$}.
\end{theorem}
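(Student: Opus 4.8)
The plan is to prove the two implications separately, and in both directions to begin by decomposing the iterates along $\S$ and $\Sp$, so as to reduce the statement to a purely spectral question on the orthogonal complement $\Sp$. The key preliminary observation is this: any $M\in\Mcl{\lm}{\lp}^\S$ is symmetric and $\S$ is a sum of its eigenspaces (all associated with the eigenvalue $1$), so both $\S$ and $\Sp$ are $M$-invariant, $M$ restricts to the identity on $\S$, and its restriction $M_\perp$ to $\Sp$ is symmetric with all eigenvalues in $[\lm,\lp]$; conversely, because $\lp<1$, any operator acting as the identity on $\S$ and as such an $M_\perp$ on $\Sp$ lies in $\Mcl{\lm}{\lp}^\S$ (its eigenvalue $1$ then has eigenspace exactly $\S$, the remaining eigenvalues lying in $[\lm,\lp]\subset(-1,1)$). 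Writing $X=\Xb+\Xp$, $Y=\Yb+\Yp$, the relations $\y^k=M\x^k$ for $k\in I$ are therefore equivalent to $\Xb=\Yb$ together with $\Yp=M_\perp\Xp$. Hence $\{(\x^k,\y^k)\}_{k\in I}$ is $\Mcl{\lm}{\lp}^\S$-interpolable if and only if \eqref{eq:subset_pres} holds and the columns of $(\Xp,\Yp)$, viewed as vectors of $\Sp$, are interpolable by a symmetric operator on $\Sp$ with spectrum in $[\lm,\lp]$.

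For \textbf{necessity}, given such an $M_\perp$, condition \eqref{eq:sym_cons} is immediate from $\Xp^T\Yp=\Xp^TM_\perp\Xp=(\Xp^TM_\perp\Xp)^T=\Yp^T\Xp$, and \eqref{eq:SDP_cons} follows by writing $(\Yp-\lm\Xp)^T(\Yp-\lp\Xp)=\Xp^T(M_\perp-\lm I)(M_\perp-\lp I)\Xp$ and observing that $(M_\perp-\lm I)(M_\perp-\lp I)$ is a negative-semidefinite symmetric operator on $\Sp$, its eigenvalues being $(\lambda-\lm)(\lambda-\lp)\le 0$ for every eigenvalue $\lambda\in[\lm,\lp]$ of $M_\perp$. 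These three constraints were already derived, as necessary conditions, in \cite{PEP_dec}.

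For \textbf{sufficiency}, assume \eqref{eq:subset_pres}--\eqref{eq:sym_cons}; by the reduction above it remains to produce a symmetric $M_\perp$ on $\Sp$ with $\lm I\preceq M_\perp\preceq\lp I$ and $\Yp=M_\perp\Xp$, and then extend it by the identity on $\S$. Choose an orthonormal basis $U$ of $\mathrm{range}(\Xp)\subseteq\Sp$, so $\Xp=UC$ with $C$ of full row rank; condition \eqref{eq:SDP_cons} forces $\ker\Xp\subseteq\ker\Yp$, hence $\Yp=DC$ for some matrix $D$, and the constraint $\Yp=M_\perp\Xp$ becomes $M_\perp U=D$. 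In these coordinates \eqref{eq:sym_cons} reads $U^TD=D^TU$ and \eqref{eq:SDP_cons} reads $D^TD-(\lm+\lp)U^TD+\lm\lp I\preceq 0$. Completing $U$ to an orthonormal basis $[\,U\ U'\,]$ of $\Sp$ and writing $M_\perp$ in the corresponding $2\times 2$ block form, the blocks $U^TM_\perp U$ and $U'^TM_\perp U$ are forced to equal the symmetric matrix $A:=U^TD$ and $B:=U'^TD$ respectively, while the block $E:=U'^TM_\perp U'$ is free; a Schur-complement computation, using the reformulated \eqref{eq:SDP_cons}, shows that the set of symmetric $E$ achieving $\lm I\preceq M_\perp\preceq\lp I$ is nonempty (any $E$ with $\lm I+B(A-\lm I)^{+}B^T\preceq E\preceq\lp I-B(\lp I-A)^{+}B^T$ works, the two bounds being mutually compatible). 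This construction is precisely the symmetric-operator interpolation result of \cite{bousselmi2023interpolation}, which we invoke here; since $\lp<1$, the resulting operator $M$ belongs to $\Mcl{\lm}{\lp}^\S$.

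The \textbf{main obstacle} is the sufficiency direction: the delicate point is to extend the partially specified operator from $\mathrm{range}(\Xp)$ to a genuine symmetric operator on all of $\Sp$ while keeping its spectrum inside the window $[\lm,\lp]$, and this is exactly where condition \eqref{eq:SDP_cons} must be used in full; we take this step from \cite{bousselmi2023interpolation}. The rest is bookkeeping: carefully tracking the preserved subspace $\S$ and exploiting $\lp<1$ to ensure that the eigenvalue $1$ of the reconstructed $M$ has eigenspace exactly $\S$, so that $M$ genuinely lands in $\Mcl{\lm}{\lp}^\S$ rather than merely in a relaxation of it.
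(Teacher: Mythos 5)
Your proof is correct and follows essentially the same route as the paper's: decompose along $\S$ and $\Sp$, observe that $M$ acts as the identity on $\S$ and as a symmetric operator with spectrum in $[\lm,\lp]$ on $\Sp$, and reduce the sufficiency to the symmetric-operator interpolation theorem of \cite{bousselmi2023interpolation}, exactly as the paper does via its explicit orthogonal change of variables $Q=[\Qb~\Qp]$. The only differences are cosmetic: you verify necessity by direct computation and sketch the Schur-complement construction behind the cited interpolation result, whereas the paper delegates both directions to \cite{bousselmi2023interpolation} after the coordinate change.
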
 \smallskip
\begin{remark}
  \new{Theorem \ref{thm:sufficiency} is stated for the case where symmetric linear operators from $\Mcl{\lm}{\lp}$ preserve the consensus subspace $\C$ (Definition \ref{def:cons_space}), but it actually holds for any general preserved subspace $\S \subseteq \Rvec{\N d}$. Indeed, the proof below remains valid if we replace the consensus subspace $\C$ by any subspace $\S$ (and adapt the dimensions where needed).}
\end{remark}
\begin{proof}[Proof of Theorem \ref{thm:sufficiency}]
  We need to prove that conditions \eqref{eq:subset_pres}, \eqref{eq:SDP_cons} and \eqref{eq:sym_cons} are necessary and sufficient for the existence of a matrix $M \in \Mcl{\lm}{\lp}$ such that $Y=MX$.
  The proof relies on the use of \cite[Theorem 3.3]{bousselmi2023interpolation} which proved the necessity and sufficiency of interpolation constraints similar to \eqref{eq:SDP_cons} and \eqref{eq:sym_cons}, for general linear operators, without any preserved subspace. 
  In our case, we know that the matrix $M$ has no impact on the consensus subspace $\C$ and only acts on the part in $\Cp$. We thus need to use \cite[Theorem 3.3]{bousselmi2023interpolation} on this part only. Decoupling both parts can be done using an appropriate change of variables: \vspace*{-0.5mm}
  \begin{align} \label{eq:change_of_var_proof} \tilde{X} = \begin{bmatrix} \Xbc \\ \Xpc \end{bmatrix} = \begin{bmatrix} \Qb^T \\ \Qp^T \end{bmatrix} X = Q^TX, \\[-6.5mm]
  \end{align}
  where $Q = [\Qb~\Qp] \in \Rmat{\N d}{\N d}$ is an orthogonal matrix of change of variables and is split into two sets of columns: $\Qb \in \Rmat{\N d}{|\C|}$ form a basis of the consensus subspace $\C$ and $\Qp \in \Rmat{\N d}{|\Cp|}$ a basis of its orthogonal complement $\Cp$.
  The new variable $\tilde{X}$ has thus two sets of components, $\Xbc \in \Rmat{|\C|}{\K}$ describing the coordinates along the subspace $\C$, and $\Xpc \in \Rmat{|\Cp|}{\K}$ describing the coordinates along the orthogonal complement $\Cp$.  
  By definition, $\Xb$ and $\Yb$ have components only along $\C$ and $\Xp$ and $\Yp$ only along $\Cp$: \vspace*{-1mm}
  \begin{align} 
    \Xb &= Q\begin{bmatrix} \Xbc \\ 0 \end{bmatrix},&
    \Yb &= Q\begin{bmatrix} \Ybc \\ 0 \end{bmatrix}, & \text{and}&& 
    \Xp &= Q\begin{bmatrix} 0 \\ \Xpc \end{bmatrix}, & 
    \Yp &= Q\begin{bmatrix} 0 \\ \Ypc \end{bmatrix}. \\[-7mm]
    \label{eq:change_of_var_split}
\end{align}
By applying this change of variable to condition \eqref{eq:subset_pres}, we obtain $\Xbc = \Ybc$. The two conditions are well equivalent because the change of variable is reversible.
 By applying this change of variable \eqref{eq:change_of_var_split} to conditions \eqref{eq:SDP_cons} and \eqref{eq:sym_cons}, since $Q$ is orthogonal, i.e., $Q^TQ = I$, we obtain 
  \begin{align}
    (\Ypc-\lm\Xpc)^T(\Ypc-\lp\Xpc) & \preceq 0, \label{eq:yyperp} \\
    \Xpc^T\Ypc &= \Ypc^T\Xpc. \label{eq:xyperp} \\[-6.5mm]
 \end{align}
 As the change of variable is reversible, the two sets of constraints are equivalent. By \cite[Theorem 3.3]{bousselmi2023interpolation}, these two constraints are necessary and sufficient for the existence of a symmetric matrix $\Wpc \in \Rmat{|\Cp|}{|\Cp|}$ with eigenvalues between $\lm$ and $\lp$ such that $\Ypc = \Wpc \Xpc$. Therefore, we have proved that conditions \eqref{eq:subset_pres}, \eqref{eq:SDP_cons} and \eqref{eq:sym_cons} are equivalent to \vspace*{-1mm}
 \begin{equation} \label{eq:equiv_cst}
  \begin{bmatrix} 
  \Ybc \\ \Ypc \end{bmatrix} = \begin{bmatrix} I_d& 0 \\ 0& \Wpc \end{bmatrix} \begin{bmatrix} \Xbc\\\Xpc \end{bmatrix} ~~ \text{with $\Wpc=\Wpc^T$,~~ $\lam(\Wpc) \in [\lm,\lp]$}.
 \end{equation}
 We now need to come back to the initial variables $X$ and $Y$ and characterize the matrix $M$ in terms of $\Wpc$. 
 We can use the (reversible) change of variable \eqref{eq:change_of_var_proof} to express equation \eqref{eq:equiv_cst} in terms of $Y$ and $X$ 
  \begin{align} \label{eq:equiv}
    Y = \underbrace{Q\begin{bmatrix} I_d& 0 \\ 0& \Wpc \end{bmatrix} Q^T}_{M} X \qquad \text{ with $\lam(\Wpc) \in [\lm,\lp]$}. \\[-8mm]
\end{align}
 By splitting $Q$ as $Q = [\Qb~\Qp]$, matrix $M$ can be written as
 \begin{equation}
    M = \Qb\Qb^T + \Qp\Wpc\Qp^T \qquad \text{ with $\lam(\Wpc) \in [\lm,\lp]$}.
  \end{equation}
 Since $\Wpc$ is symmetric, this expression shows that $M$ is also symmetric. 
 We now show that the symmetric matrix $M$ has $|\C|$ eigenvalues equal to $1$, with associated eigenvectors $\Qb$. Let $v_{\paral}$ denote any column of $\Qb$, i.e., any basis vector of the consensus subspace $\C$, we have well that \vspace{-1mm}
  \begin{equation} \label{eq:evparal}
     M v_{\paral} = \Qb\Qb^Tv_{\paral} + \Qp\Wpc\Qp^T v_{\paral} =  v_{\paral}.
 \end{equation}
 Indeed, $\Qp^T v_{\paral} = 0$ and $\Qb\Qb^Tv_{\paral} = v_{\paral}$, since $v_{\paral}$ is a column of $\Qb$ which satisfies $\Qp^T\Qb = 0$ and $\Qb^T\Qb = I$.  \\
 The other eigenvalues of $M$ are equal to the ones of $\Wpc$. Indeed, Let $(v_{\perp}, \lam_{\perp})$ be a pair of eigenvector and eigenvalue of matrix $\Wpc \in \Rmat{\Cp}{\Cp}$, then $(\Qp v_{\perp}, \lam_{\perp})$ is a pair of eigenvector and eigenvalue of matrix $M \in \Rmat{\N d}{\N d}$. This can be verified easily as \vspace{-1mm}
 \[ M \qty(\Qp v_{\perp}) =  \Qb\Qb^T \qty(\Qp v_{\perp}) + \Qp\Wpc\Qp^T\qty(\Qp v_{\perp}) = \lam_{\perp} \Qp v_{\perp}, \]
 since $\Qb^T\Qp = 0$ and $\Qp^T \Qp = I$.
\end{proof} \medskip 
 
In Theorem \ref{thm:sufficiency}, we have proved that conditions \eqref{eq:subset_pres}, \eqref{eq:SDP_cons} and \eqref{eq:sym_cons} are necessary and sufficient for the existence of a matrix $M \in \Mcl{\lm}{\lp}$ such that $Y=MX$. \new{We now give some interpretation of the constraints, explain why they are only necessary for $\Wcl{\lm}{\lp}$-interpolability, and describe the impacts in the PEPs for decentralized optimization.}


\paragraph*{Interpretation of the constraints}
\new{By Definition \ref{def:cons_space} of the consensus subspace, the orthogonal decomposition terms can be expressed using the agent averages:\vspace*{-1mm}  
\begin{align}
  \hspace*{2cm} \Xb &= \Xbl \kron \mathbf{1}_\N  & \Xp &= X - \Xb, \label{eq:Xdec1} \\
  \hspace*{2cm} \Yb &= \Ybl \kron \mathbf{1}_\N  & \Yp &= X - \Yb, \label{eq:Xdec2}
\end{align}
where $\Xbl,~\Ybl \in \Rmat{d}{\K}$ are agent average of $X$ and $Y$, i.e., $\overline{X}_{\cdot,k} = \frac{1}{\N} \sum_{i=1}^\N x_i^k$. Therefore, constraint \eqref{eq:subset_pres} is equivalent to $\Xbl = \Ybl$.
This constraint is related to the $1$ eigenvalues of the matrix and their corresponding eigenvectors, which form a basis for the consensus space $\C$. The constraint imposes that each vector $\x^k$ have the same agents average as $\y^k$, for each generalized consensus step \eqref{eq:gen_cons} ($k=1,\dots,\K$).}

\new{Interpretations of the linear matrix inequality (LMI) constraints \eqref{eq:SDP_cons} and \eqref{eq:sym_cons} are detailed in \cite{PEP_dec}. In summary, \eqref{eq:SDP_cons} requires the disagreement between the agents, measured by $\yc^T \yc$ for $y$ and $\xc^T \xc$ for $x$ to be reduced by a factor $\lam^2 \in [0,1)$ after each consensus step. Constraints \eqref{eq:SDP_cons} and  \eqref{eq:sym_cons} also allow linking different consensus steps to each other, via the impact of off-diagonal terms which exploit the fact that these steps use the same averaging matrix.}

\paragraph*{Necessary interpolation conditions for $\Wcl{\lm}{\lp}$}
\new{In Theorem \ref{thm:sufficiency}, we have proved that conditions \eqref{eq:subset_pres}, \eqref{eq:SDP_cons} and \eqref{eq:sym_cons} are necessary and sufficient for the existence of a matrix $M \in \Mcl{\lm}{\lp}$ such that $Y=MX$. By definition of $\Mcl{\lm}{\lp}$ and $\Wcl{\lm}{\lp}$, these conditions are necessary for the existence of an averaging matrix $W \in \Wcl{\lm}{\lp}$ such that $Y = (W\kron I_d)X$, as shown in the following corollary.  
\begin{corollary} \label{cor:conscons}
  Let $\{(\x^k,\y^k)\}_{k\in I}$ be a set of $\K$ pairs of points and $\lm,\lp \in (-1,1)$, $\lm\le\lp$.
  If $\{(\x^k,\y^k)\}_{k\in I}$ is $\Wcl{\lm}{\lp}$-interpolable (Definition \ref{def:Winterp}), then we have \vspace*{-2mm}
   \begin{align}
      \Xbl &= \Ybl, \label{eq:avg_pres2} \\
      (\Yp-\lm\Xp)^T(\Yp-\lp\Xp) & \preceq 0, \label{eq:SDP_cons2} \\
      \Xp^T\Yp &= \Yp^T\Xp, \label{eq:sym_cons2}
   \end{align}
   with $\Xbl, \Ybl \in \Rmat{d}{\K}$ and $\Xp, \Yp \in \Rmat{\N d}{\K}$ defined in \eqref{eq:Xdec1}-\eqref{eq:Xdec2}.
  \end{corollary}
\begin{proof}
  From a matrix $W \in \Wcl{\lm}{\lp}$, we can always build a matrix $M = (W\kron I_d) \in \Mcl{\lm}{\lp}$, meaning that any set of pairs of points that is $\Wcl{\lm}{\lp}$-interpolable is also $\Mcl{\lm}{\lp}$-interpolable. Therefore, the points $\{(\x^k,\y^k)\}_{k\in I}$ are $\Mcl{\lm}{\lp}$-interpolable and satisfy constraints \eqref{eq:avg_pres2}, \eqref{eq:SDP_cons2} and \eqref{eq:sym_cons2} by Theorem \ref{thm:sufficiency}. 
\end{proof}
The reverse statement does not hold since constraints \eqref{eq:avg_pres2}, \eqref{eq:SDP_cons2} and \eqref{eq:sym_cons2} guarantee the interpolation by a matrix $M \in \Mcl{\lm}{\lp}$, that may not have the Kronecker structure required to extract an averaging matrix $W \in \Wcl{\lm}{\lp}$. The full mask of $M \in \Mcl{\lm}{\lp}$ allows different weights to be applied to each component of the agent and can also mix different components together, which is not the case for consensus steps \eqref{eq:cons_1} that impose a sparse mask of the form $(W \kron I_d)$.}


\paragraph*{Relaxed Performance Estimations Problems} 
One can show that the interpolation constraints stated in Corollary \ref{cor:conscons} are LMI Gram-representable. The detailed proof is given in \cite[Theorem 1]{PEP_dec} but the main idea is to express the constraints with the scalar products of the local variables $x_i^k, y_i^k$. 
\new{Therefore, by Proposition \ref{prop:GramPEP}, these  constraints can be used in SDP PEP formulations. As explained in \cite{PEP_dec}, using necessary constraints \eqref{eq:avg_pres2}, \eqref{eq:SDP_cons2} and \eqref{eq:sym_cons2} to represent consensus steps of the form of \eqref{eq:cons_1} in PEP allows obtaining worst-case performance guarantees for a decentralized optimization algorithm $\Al \in \Ad$, that are valid for any averaging matrix $W \in \Wcl{\lm}{\lp}$. This formulation is therefore called the \textit{spectral agent-dependent PEP formulation} and its results the \textit{spectral worst-case}. This is an agent-dependent PEP formulation because the PEP depends on each agent individually.}

\new{When different averaging matrices are used for different sets of consensus steps in the algorithm, these constraints from Corollary \ref{cor:conscons} can be applied independently to each set of consensus steps.}

\new{The bounds obtained with this formulation have a priori no tightness guarantees because it considers larger sets of possible consensus iterates $\{\x^k,\y^k\}$, i.e., those that are $\Mcl{\lm}{\lp}$-interpolable. However, this relaxation of the Kronecker structure in consensus steps does not appear to impact the worst-case results, as already observed numerically in \cite{PEP_dec}.}
Moreover, one can always check the tightness of the PEP solution a posteriori, by checking if the provided solution is $\Wcl{\lm}{\lp}$-interpolable. If successful, this check also recovers an instance of the worst matrix $W \in \Wcl{\lm}{\lp}$, see \cite{PEP_dec} for details.


\paragraph*{Impact of the relaxation} \label{sec:impact_relax}
While the literature for distributed optimization usually assumes consensus steps of the form $\y = (W \kron I_d) \x$ with $W \in \Wcl{\lm}{\lp}$, the existing theoretical performance guarantees also appear to use non-tight description of these consensus steps. Indeed, we did not find any theoretical guarantee that exploits the Kronecker structure $(W \kron I_d)$ of the consensus steps. In particular, many results in decentralized optimization remain valid for generalized consensus steps $\y = M \x$ with $M \in \Mcl{\lm}{\lp}$ because the proofs generally rely on the convergence analysis of the consensus step, which is not impacted by the structure of $M \in \Mcl{\lm}{\lp}$, as shown in Proposition \ref{prop:Mcons}. This proposition extends classical results of consensus theory, to the use of matrices $M \in \Rmat{\N d}{\N d}$ that do not have the Kronecker structure of $(W \kron I_d)$. When $d=1$, we have $M=W \in \Rmat{\N}{\N}$ and no extension is needed. 
\begin{proposition} \label{prop:Mcons}
  Let $M \in \Mcl{\lm}{\lp} \subset \Rmat{\N d}{\N d}$ with $\lm, \lp \in (-1,1)$ and $\x^0 \in \Rvec{\N d}$ a vector stacking the variables $x_i^0 \in \Rvec{d}$ of each agent.
  If the vector $\x^0$ is updated with a series of general matrix steps $\x^{k+1} = M \x^{k}$, then the sequence of vectors $\x^k$ converges to the agent average consensus vector, i.e., a vector stacking $\N$ times the agent average $\overline{x}^0 = \frac{1}{\N}\sum_{i=1}^\N x_i^0$: \vspace{-1mm}
  \[\lim_{k\to\infty} \x^{k} = \mathbf{1}_\N \kron \overline{x}^0.\]
\end{proposition}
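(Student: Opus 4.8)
The plan is to exploit the spectral structure of $M \in \Mcl{\lm}{\lp}^\C$ directly, using the decomposition into the consensus subspace $\C$ and its orthogonal complement $\Cp$ that was already central to Theorem~\ref{thm:sufficiency}. First I would decompose the initial vector as $\x^0 = \xbb^0 + \xpp^0$, where $\xbb^0$ is the projection of $\x^0$ onto $\C$ and $\xpp^0$ the projection onto $\Cp$. Since $\C$ is spanned by the eigenvectors of $M$ associated with eigenvalue $1$, we have $M\xbb^0 = \xbb^0$; and since $M$ is symmetric with $\Cp$ an invariant subspace, $M^k\xpp^0$ stays in $\Cp$ for all $k$. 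Iterating the update $\x^{k+1}=M\x^k$ therefore gives $\x^k = \xbb^0 + M^k\xpp^0$, so the whole question reduces to showing $M^k\xpp^0 \to 0$.

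Second, I would bound $\|M^k\xpp^0\|$ using the eigenvalue constraint. Restricting the symmetric matrix $M$ to the invariant subspace $\Cp$, all its eigenvalues lie in $[\lm,\lp] \subset (-1,1)$, so the spectral radius of this restriction is at most $\rho := \max(|\lm|,|\lp|) < 1$. Hence $\|M^k\xpp^0\| \le \rho^k \|\xpp^0\| \to 0$ as $k\to\infty$, which yields $\lim_{k\to\infty}\x^k = \xbb^0$. The remaining step is to identify $\xbb^0$ explicitly: the orthogonal projection of $\x^0$ onto the consensus subspace $\C = \{\x : x_1=\dots=x_\N\}$ is exactly the vector stacking $\N$ copies of the agent average $\overline{x}^0 = \frac1\N\sum_{i=1}^\N x_i^0$, i.e. $\xbb^0 = \mathbf{1}_\N \kron \overline{x}^0$. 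This can be checked directly by verifying that $\x^0 - \mathbf{1}_\N\kron\overline{x}^0 \in \Cp$ (its agent average is zero, which is precisely orthogonality to $\C$). Combining the two steps gives the claimed limit.

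I do not expect any serious obstacle here: the argument is a routine consequence of the definition of $\Mcl{\lm}{\lp}^\C$ in \eqref{eq:Mcl} together with the spectral theorem for the symmetric matrix $M$. The only mild subtlety worth stating carefully is that $\C$ and $\Cp$ are $M$-invariant and that $M$ acts as the identity on $\C$ — both of which follow immediately because, by definition of $\Mcl{\lm}{\lp}^\C$, the eigenvectors associated with the eigenvalue $1$ of $M$ form a basis of $\C$, and symmetry of $M$ forces the orthogonal complement of an eigenspace to be invariant. If one wants to avoid invoking the spectral theorem wholesale, one can instead write $M = \Qb\Qb^T + \Qp\Wpc\Qp^T$ exactly as in the proof of Theorem~\ref{thm:sufficiency} and compute $M^k = \Qb\Qb^T + \Qp\Wpc^k\Qp^T$, with $\|\Wpc^k\| \le \rho^k \to 0$; this makes the convergence and the limit entirely explicit.
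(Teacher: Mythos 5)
Your proof is correct and follows essentially the same route as the paper: both arguments rest on the spectral structure of $M$, identifying the eigenvalue-$1$ eigenspace with $\C$ so that the projection $\mathbf{1}_\N \kron \overline{x}^0$ is fixed, while the component in $\Cp$ is contracted geometrically since the remaining eigenvalues lie in $[\lm,\lp]\subset(-1,1)$. The paper phrases this via the explicit change of variables $\z^k = V^T\x^k$ in the eigenbasis rather than via orthogonal projections, but the content is identical.
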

\begin{proof}
  Since $M \in \Mcl{\lm}{\lp}$, the matrix admits the following spectral decomposition: \vspace{-1mm}
  \begin{equation} \label{eq:Mev}
     M = V \Sigma V^T, \qquad \text{with} \qquad V_{1,\dots,d} = (\mathbf{1}_\N \kron I_d)/{\scriptstyle \sqrt{\N}},
  \end{equation}
  where $V$ is the orthogonal matrix of eigenvectors of $M$ and $\Sigma$ the diagonal matrix of eigenvalues of $M$. We define a new variable $\z^k = V^T \x^k$, for which the matrix step $\x^{k+1} = M \x^{k}$ becomes \vspace{-1mm}
  \begin{align*} \z^{k+1} = \Sigma \z^k. \\[-7mm] \end{align*}
  The $d$ first eigenvalues in $\Sigma$ are associated with eigenvectors $V_{1,\dots,d}$ and are equal to 1; all the others are smaller than 1 in absolute value. Therefore, \vspace{-1mm}
  \[ \lim_{k\to\infty} \z^k = \lim_{k\to\infty} \Sigma^k \z^0 = \begin{bmatrix} z_1^0 & \dots & z_d^0 & 0 & \dots & 0 \end{bmatrix}^T =\tilde{\z}^0,\]
  where vector $\tilde{\z}^0$ contains the $d$ first entries of $\z^0$ and $(\N-1)d$ zeros. Thus, the sequence $\x^k$ converges to $V \tilde{\z}^0$ which is the combination of the columns $V_{1,\dots,d}$ with coefficients given by the $d$ first entries of $\z^0$, i.e.,
  \[ \lim_{k\to\infty} \x^k = \lim_{k\to\infty} V \z^k = V \tilde{\z}^0 = V_{1,\dots,d} ~\z^0_{1,\dots,d}.\]
  Finally, by definition of variable $\z$, we have $\z^0 = V^T \x^0$, meaning that the $d$ first entries of $\z^0$ are given by
  $\z^0_{1,\dots,d} = V_{1,\dots,d}^T \x^0$, and therefore: \vspace{-1.5mm}
  \begin{align*} 
    \lim_{k\to\infty} \x^k = V_{1,\dots,d} V_{1,\dots,d}^T \x^0 = \frac{1}{\N} (\mathbf{1}_\N\mathbf{1}_\N^T \kron I_d) \x^0 = \mathbf{1}_\N \kron \overline{x}^0, \\[-9mm]
  \end{align*}
  where the second equality holds by definition of $V_{1,\dots,d}$ \eqref{eq:Mev}.
\end{proof}

\subsection{\new{Section summary: agent-dependent PEP}}
In summary, in this section, we have seen that the worst-case performance of $\K$ iterations of a decentralized optimization algorithm from $\Ad$ can be computed using an SDP reformulation of the Performance Estimation Problem (PEP) with a size growing with $\K$ and the number of agents $\N$ (see Proposition \ref{prop:GramPEP}). When the averaging matrix $W$ is given, the resulting PEP worst-case bound is tight. 
When computing a worst-case valid for all averaging matrices from $\Wcl{\lm}{\lp}$ (symmetric, generalized doubly-stochastic and with a range on non-principal eigenvalues), the resulting PEP is a relaxation, which may provide non-tight performance bounds. We have proved that some form of relaxation is inevitable if we want to obtain a convex formulation (see Theorem \ref{thm:noGram}). The constraints we provide to represent the consensus steps in PEP allow for more general matrix steps that can mix components of vectors to which they apply (see Theorem \ref{thm:sufficiency}). While this is not natural, it does not alter the convergence of pure consensus, and we do not expect it to be an important source of conservatism in performance limits. (see Proposition \ref{prop:Mcons}). Moreover, the tightness of the PEP solutions can be verified a posteriori.

In the rest of the paper, we will exploit symmetries of agents in this agent-dependent PEP formulation to obtain equivalent PEP formulations whose size is independent of the number of agents $\N$. We also characterize conditions under which the resulting worst-case is also independent of $\N$.
 
\section{Equivalence classes of agents in PEP} \label{sec:equicl}
To exploit agent symmetries in the agent-dependent PEP, let us define an equivalence relation between agents, along with the corresponding equivalence classes. These will be used to build our new compact PEP formulation whose size depends only on the number of equivalence classes.

Let $\f$ and $G=P^TP$ be a solution of an agent-dependent PEP, described in Section \ref{sec:agentPEP}. We can order their elements to regroup them depending on the agent to which they are associated:
\begin{equation} \label{eq:Fagents}
  \f= \begin{bmatrix}f_1^T & \dots & f_\N^T \end{bmatrix}, \quad \text{where $f_i = \qty[f_i^k]_{k \in I} \in \Rvec{q}$ is a vector with the $q$ function values of agent $i$,}
\end{equation}
Similarly, we write $P \in \Rmat{d}{\N p}$ as follows
\begin{equation} \label{eq:defPi}
  P = \begin{bmatrix}P_1 & \dots & P_\N \end{bmatrix}  \quad \text{where $P_i \in \Rmat{d}{p}$ contains the $p$ vector variables related to agent $i$,} 
\end{equation}
e.g. $P_i = \qty[y_i^k~ x_i^k~ g_i^k]_{k \in I}$,
and the Gram matrix $G \in \Rmat{\N p}{\N p}$ is thus defined as
\begin{equation} \label{eq:initialGram}
  G = P^TP =   \begin{bmatrix}P_1^TP_1 & P_1^TP_2 & \dots \\ P_2^TP_1 & \ddots & \\  \vdots& & P_\N^TP_\N \end{bmatrix} =  \begin{bmatrix}G_{11} & G_{12} & \dots \\ G_{21} & \ddots & \\  \vdots& & G_{\N\N} \end{bmatrix}.
\end{equation}
Therefore, diagonal blocks $G_{ii}$ are symmetric and off-diagonal blocks are such that $G_{ij} = G_{ji}^T$. 
We assume that each block $P_i$ (and $f_i$) contains the same type and number of variables for each agent $i$. The variables common to all agents, such as $x^*$, are copied into each agent block $P_i$. This also covers the case where agents are heterogeneous and hold different types or numbers of variables since we can always add variables as columns of $P_i$ (and $f_i$), even if agent $i$ does not use them. Moreover, each agent block $P_i$ (and $f_i$) has the same column order, enabling us to use the same coefficient vectors for column selection in each agent block. 
\begin{notation*}[Coefficient vector $\id_x \in \Rvec{p}$] A coefficient vector for a variable $x$ is denoted $\id_x \in \Rvec{p}$, and contains linear coefficients selecting the correct combination of columns in $P_i \in \Rmat{d}{p}$ to obtain vector $x_i \in \Rvec{d}$, i.e. $P_i \id_x = x_i$, for any $i = 1,\dots,\N$. This notation allows, for example, to write $x_i^Tx_i$ as $x_i^Tx_i = \id_x^T P_i^TP_i \id_x = \id_x^T G_{ii} \id_x$, for any agent $i$. Similarly, let $\id_{f(x)}$ be a vector of coefficients selecting the correct element in the vector $f_i$ such that $\id_{f(x)}^T f_i = f_i(x_i)$. 
These coefficient vectors will be used to explicit our new PEP formulations in Sections \ref{sec:symPEP} and \ref{sec:subsetsAgPEP}, as well as in Appendix \ref{ap:explicit}.
\end{notation*}

\new{Using this agent block notation for $\f$ and $G$, we can define permutations of agents in a PEP solution, leading to 
the definition of an equivalence relation between agents in performance estimation problems (Definition \ref{def:equiva} below).
\begin{notation*}[Permuted solutions]
  For any feasible solution $(\f, G)$ of an agent-dependent PEP formulation \eqref{eq:SDP_PEP}, written in the form of \eqref{eq:Fagents} and \eqref{eq:initialGram}, the solution obtained after permutation of agents $i, j \in \V$ in $\f$ and $G$ is denoted $(\f_{\Pi_{ij}}, G_{\Pi_{ij}})$. 
\end{notation*}
\begin{example*}[Permuted solutions]
  Let $(\f, G)$ be a feasible PEP solution written as \vspace{-1mm}
  \begin{align}
  \hspace{1.75cm}\f &= \begin{bmatrix}f_1^T & f_2^T & \dots & f_\N^T \end{bmatrix} & 
  G&=P^TP \qquad \text{with }\quad P = \begin{bmatrix}P_1 & P_2 & \dots & P_\N \end{bmatrix}.
  \end{align}
  Agents $1$ and $2$ can be swapped to obtain a permuted solution $(\f_{\Pi_{12}}, G_{\Pi_{12}})$: \vspace{-1mm}
  \begin{align}
    \hspace{1.7cm}\f_{\Pi_{12}} &= \begin{bmatrix}f_2^T & f_1^T & \dots & f_\N^T \end{bmatrix} &
    G_{\Pi_{12}}&=P_{\Pi_{12}}^TP_{\Pi_{12}} \qquad \text{with}\quad P_{\Pi_{12}} &= \begin{bmatrix}P_2 & P_1 & \dots & P_\N \end{bmatrix}.
    \end{align}
\end{example*}
\begin{definition}[Equivalent agents] \label{def:equiva} 
  Let $i, j \in \V$ be two distinct agents.
  We define an equivalence relation between the two agents $i \sim j$ if, for any feasible solution $(\f, G)$ \eqref{eq:Fagents}-\eqref{eq:initialGram} of an agent-dependent PEP formulation \eqref{eq:SDP_PEP}, the permuted solution $(\f_{\Pi_{ij}}, G_{\Pi_{ij}})$, with agent $i$ and $j$ swapped, is feasible for the PEP and provides the same performance value,   \vspace{-2mm}
  \[ \P(\f, G) = \P(\f_{\Pi_{ij}}, G_{\Pi_{ij}}). \]
\end{definition} }



Intuitively, this means that equivalent agents have exactly the same role in the algorithm and the performance setting.
By definition, this relation is reflexive, symmetric, and transitive, and thus corresponds well to an equivalence relation, which induces a partition into equivalence classes. \smallskip

\begin{definition}[Equivalence classes of agents] \label{def:equicl}
 Let $\V$ be a set of $n$ agents. The equivalence relation from Definition \ref{def:equiva} implies a partition $\T$ of set $\V$ into $\U$ equivalence classes (or subsets) of agents: $\T = \{\V_1,\dots,\V_\U\}$. Each class only contains agents that are equivalent to each other. The size of each class $\V_u$ is denoted $\N_u$ ($u=1,\dots,\U$). We also denote the equivalence class of a given agent $i$ by $\V_{u_i}$.
\end{definition} \smallskip

 Proposition \ref{prop:sym_sol_2} proves the existence of PEP solutions for which equivalent agents have equal blocks in the solution. This will be the building block of our compact PEP formulations in Sections \ref{sec:symPEP} and \ref{sec:subsetsAgPEP}. \smallskip

\begin{proposition}[Existence of symmetric solutions in PEP] \label{prop:sym_sol_2}
  Let $\f\in \Rvec{\N q}$ and $G \in \Rmat{\N p}{\N p}$ be any feasible solution of an agent-dependent \new{(and LMI Gram-representable)} PEP formulation for distributed optimization \eqref{eq:SDP_PEP}; and $\T = \{\V_1,\dots,\V_\U\}$ be the partition of $\V$ induced by the equivalence relation from Definition \ref{def:equiva}. There is a symmetrized agent-class solution $(\Fss, \Gss)$, with equal blocks for equivalent agents, providing another valid solution for the PEP, 
  \begin{align}
    \hspace*{-4mm} \Fss &= \qty[(f_1^s)^T \dots\, (f_\N^s)^T] \qquad \text{with }&& f_i^s = \new{\fa^{u_i} \defeq} \frac{1}{\N_{u_i}} \sum_{k \in \V_{u_i}} f_k, \hspace{3.5cm} \text{for all } i \in \V, \label{eq:Fsym} \\ 
    \hspace*{-4mm} \Gss &= \begin{bmatrix}G_{11}^s & G_{12}^s & \dots \\ G_{21}^s & \ddots & \\  \vdots& & G_{\N\N}^s \end{bmatrix}   
    \quad \text{with }
    &&
    G_{ij}^s = \begin{cases} 
      \new{\Ga^{u_i} \defeq} \frac{1}{\N_{u_i}} \sum_{k \in \V_{u_i}} G_{kk} & \text{for $i=j$,} \\
      \new{\Gr^{u_i} \defeq} \frac{1}{\N_{u_i}(\N_{u_i}-1)} \sum_{k \in \V_{u_i}} \sum_{\substack{l \in \V_{u_i} \\ l\ne k }} G_{kl} & \text{\new{for $j \ne i,~ j \in \V_{u_i}$}}, \\
      \new{\Gc^{u_iu_j} \defeq} \frac{1}{\N_{u_i}} \sum_{k \in \V_{u_i}} G_{kj},  & \text{\new{for $j \ne i,~j \in \V\setminus\V_{u_i}$}},
    \end{cases} \label{eq:Gsym}
  \end{align}
  Moreover, this symmetrized solution $(\Fss, \Gss)$ provides the same performance value as $(\f,G)$
  \[ \P(\Fss, \Gss) = \P(\f,G).\]
\end{proposition}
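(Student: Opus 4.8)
The plan is to exploit the convexity of the feasible set of the SDP-PEP together with the symmetry of the performance setting under agent permutations. The key observation is that the symmetrized solution $(\Fss,\Gss)$ described in the statement is exactly an average, over the symmetric group acting on each equivalence class, of permuted copies of $(F,G)$; convexity of the feasible region and concavity (in fact linearity) of $\P$ then do the work.

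\medskip

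\textbf{Step 1: Permuted solutions are feasible with the same value.} For a permutation $\pi$ of $\V$ that maps each equivalence class $\V_u$ onto itself, let $(F_\pi,G_\pi)$ denote the solution obtained by relabelling the agent blocks of $F$ and $G$ according to $\pi$, i.e. $(F_\pi)$ has blocks $f_{\pi^{-1}(i)}$ and $G_\pi$ has blocks $G_{\pi^{-1}(i)\pi^{-1}(j)}$. Since $\pi$ is a composition of transpositions of equivalent agents, Definition \ref{def:equiva} (applied transposition by transposition, using transitivity of the relation and the fact that the PEP constraints are preserved under each such transposition) guarantees that $(F_\pi,G_\pi)$ is again feasible for \eqref{eq:SDP_PEP} and that $\P(F_\pi,G_\pi)=\P(F,G)$. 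In particular $G_\pi\succeq 0$ because it is $\Sigma^T G\Sigma$ for the corresponding permutation matrix $\Sigma$ acting on the block structure, so it is congruent to $G$.

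\medskip

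\textbf{Step 2: Average over the class-preserving permutations.} Let $H = \mathrm{Sym}(\V_1)\times\cdots\times\mathrm{Sym}(\V_\U)$ be the group of permutations of $\V$ that stabilize every equivalence class, and set
\[
  \Fss = \frac{1}{|H|}\sum_{\pi\in H} F_\pi, \qquad \Gss = \frac{1}{|H|}\sum_{\pi\in H} G_\pi .
\]
The feasible set of \eqref{eq:SDP_PEP} is convex: $G\succeq 0$ is a convex (LMI) constraint, and by Proposition \ref{prop:GramPEP} all the algorithm, interpolation, initial-condition and optimality constraints are linearly or LMI Gram-representable, hence convex in $(F,G)$. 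Therefore the convex combination $(\Fss,\Gss)$ of the feasible points $\{(F_\pi,G_\pi)\}_{\pi\in H}$ is feasible. A short computation identifies the block averages: the $i$-th function block of $\Fss$ is the average of $f_{\pi^{-1}(i)}$ over $\pi\in H$, and since $\pi^{-1}(i)$ ranges uniformly over $\V_{u_i}$ this equals $\tfrac{1}{\N_{u_i}}\sum_{k\in\V_{u_i}}f_k$, matching \eqref{eq:Fsym}. For $G$, the $(i,j)$ block of $\Gss$ is the average of $G_{\pi^{-1}(i)\pi^{-1}(j)}$; when $i=j$ the pair $(\pi^{-1}(i),\pi^{-1}(i))$ ranges uniformly over $\{(k,k):k\in\V_{u_i}\}$, when $i\neq j$ are in the same class the pair $(\pi^{-1}(i),\pi^{-1}(j))$ ranges uniformly over ordered pairs of distinct elements of $\V_{u_i}$, and when $i,j$ lie in different classes $\pi^{-1}(i)$ ranges uniformly over $\V_{u_i}$ while $\pi^{-1}(j)$ ranges (independently, since the two classes are permuted independently) over $\V_{u_j}$, with $j$ itself fixed as a representative — this reproduces exactly the three cases of \eqref{eq:Gsym}. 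One also checks $\Gss$ has the stated symmetric block structure because each $G_\pi$ does.

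\medskip

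\textbf{Step 3: The performance value is preserved.} Since $\P$ is linear (or, more generally, affine/concave — linear in the entries of $F$ and $G$ in all the settings considered here) and $\P(F_\pi,G_\pi)=\P(F,G)$ for every $\pi\in H$ by Step 1, we get
\[
  \P(\Fss,\Gss) = \frac{1}{|H|}\sum_{\pi\in H}\P(F_\pi,G_\pi) = \P(F,G).
\]
This also shows $(\Fss,\Gss)$ is not merely feasible but optimal whenever $(F,G)$ is, which is the property needed for the compact reformulations in Sections \ref{sec:symPEP} and \ref{sec:subsetsAgPEP}.

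\medskip

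\textbf{Main obstacle.} The only genuinely delicate point is the block-averaging bookkeeping in Step 2 — verifying that averaging $G_{\pi^{-1}(i)\pi^{-1}(j)}$ over the product of symmetric groups yields precisely the three-case formula \eqref{eq:Gsym}, in particular the normalization $\tfrac{1}{\N_{u_i}(\N_{u_i}-1)}$ for the same-class off-diagonal blocks (there are $\N_{u_i}(\N_{u_i}-1)$ ordered pairs of distinct agents in $\V_{u_i}$) and the asymmetric-looking ``fix $j$, average over $k\in\V_{u_i}$'' form for the inter-class blocks, which arises because one must pick a single representative in the target class to state the formula while the average is genuinely over all of $\V_{u_i}$. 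A secondary subtlety is justifying in Step 1 that feasibility is preserved under an arbitrary class-preserving permutation and not just under a single transposition of two equivalent agents; this follows by decomposing $\pi$ into transpositions of equivalent agents and invoking Definition \ref{def:equiva} together with transitivity of $\sim$ at each step, but it is worth stating explicitly. Everything else is a direct consequence of convexity of the SDP feasible set and linearity of $\P$.
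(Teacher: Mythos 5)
Your proposal is correct and follows essentially the same route as the paper's own proof: symmetrize by averaging $(F,G)$ over all class-preserving permutations, then invoke feasibility of each permuted copy (Definition \ref{def:equiva}), convexity/linearity of the SDP-PEP in $(F,G)$, and invariance of $\P$ under each permutation. You merely make explicit two points the paper leaves implicit — the decomposition of a class-preserving permutation into transpositions of equivalent agents, and the combinatorial bookkeeping identifying the averaged blocks with \eqref{eq:Fsym}--\eqref{eq:Gsym} — which is a welcome but not substantively different elaboration.
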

\begin{proof}
  By definition, any permutation of equivalent agents $\Pi \in \Rmat{\N}{\N}$ applied on a given PEP solution $\f$, $G$ provides another valid PEP solution, with the same objective value:
  \[\f_\Pi = \f\qty(\Pi \otimes I_{q}), \qquad P_\Pi = P \qty(\Pi \otimes I_p), \qquad G_\Pi = P_\Pi^T P_\Pi = \qty(\Pi \otimes I_p)^TG\qty(\Pi \otimes I_p).\]
  A permutation of agents corresponds to a permutation of (sets of) columns and then the permutation matrix multiplies $\f$ and $P$ on the right. The Kronecker products $\kron$ are used to permute all the variables related to the same agent at the same time.

  \new{Since the PEP problem is LMI Gram-representable}, its objective is linear in $\f$ and $G$ and the combination of PEP solutions with the same objective value gives another PEP solution with the same objective value. We can thus construct a symmetrized PEP solution with the same objective value as $(\f,G)$ by averaging all the possible permuted solutions $(\f_{\Pi}, G_{\Pi})$. Each permutation exchanges only agents from the same equivalence class, and therefore, the average applies to each class separately:
  \begin{align*}
    f_i^s &= \frac{(\N_{u_i}-1)!}{\N_{u_i}!} \sum_{k \in \V_{u_i}} f_k = \frac{1}{\N_{u_i}} \sum_{k \in \V_{u_i}} f_k ~\new{\defeq \fa^{u_i}}, &\text{for all } i \in \V, \\ 
    G_{ii}^s &= \frac{(\N_{u_i}-1)!}{\N_{u_i}!} \sum_{k \in \V_{u_i}}G_{kk} = \frac{1}{\N_{u_i}} \sum_{k \in \V_{u_i}} G_{kk} ~\new{\defeq \Ga^{u_i}} &\text{for all } i \in \V,\\
    G_{ij}^s &= \frac{(\N_{u_i}-2)!}{\N_{u_i}!} \sum_{k \in \V_{u_i}} \sum_{\substack{l \in \V_{u_i} \\ l\ne k }} G_{kl} =  \frac{1}{\N_{u_i}(\N_{u_i}-1)} \sum_{k \in \V_{u_i}} \sum_{\substack{l \in \V_{u_i} \\ l\ne k }} G_{kl} ~\new{\defeq \Gr^{u_i}}
     &\text{for all $i\in \V$ and $j \ne i,~j \in \V_{u_i}$},\\
    G_{ij}^s &= \frac{(\N_{u_i}-1)!}{\N_{u_i}!} \sum_{k \in \V_{u_i}}G_{ik} = \frac{1}{\N_{u_i}} \sum_{k \in \V_{u_i}} G_{kj} ~\new{\defeq \Gc^{u_i u_j}} &\text{for all $i\in \V$ and $j \ne i,~j \in \V\setminus\V_{u_i}$}, \\[-9mm]
  \end{align*} 
\end{proof} 

\begin{definition}[\new{Blocks $\fa^u$, $\Ga^u$, $\Gr^u$ and $\Gc^{uv}$}] \label{def:blocks} The blocks repeating in $\f^s$ \eqref{eq:Fsym} are denoted by $\fa^u$, for each equivalence class $u=1,\dots,\U$. For the symmetrized Gram matrix $ G^s$ \eqref{eq:Gsym}, we denote the three types of blocks composing it by $\Ga^u$, $\Gr^u$, and $\Gc^{uv}$, for all equivalence classes $u,v=1,\dots,\U$.
  \begin{itemize}
    \item Blocks $\Ga^u \in \Rmat{p}{p}$ (for $u=1,\dots,\U$) correspond to the blocks of the Gram matrix containing the scalar products between the variables of one agent from $\V_u$. These blocks lie on the diagonal of $G^s$ \eqref{eq:Gsym} and are symmetric by definition.  
    \item Blocks $\Gr^u \in \Rmat{p}{p}$ (for $u=1,\dots,\U$) correspond to the blocks of the Gram matrix containing the scalar products between variables of two different agents from the same class $\V_u$. These blocks are symmetric because they can be written as a sum of symmetric matrices 
    $\Gr^u = \frac{1}{\N_{u}(\N_{u}-1)} \sum_{k,l \in \V_{u}, k > l } (G_{kl} + G_{lk}^T)$. Moreover, they are only defined for classes with at least 2 agents ($\N_u \ge 2$).
    \item Blocks $\Gc^{uv} \in \Rmat{p}{p}$ (for $u\ne v=1,\dots,\U$) correspond to the blocks of the Gram matrix containing the scalar products between variables of two agents from different classes $\V_u$ and $\V_v$. These blocks are non-symmetric and are only defined when there are at least 2 different classes of agents ($\U\ge2$).
\end{itemize}
\end{definition} 

Therefore, we can solve the agent-dependent PEP problem, described in Section \ref{sec:agentPEP}, restricted to symmetric solutions of the form of Proposition \ref{prop:sym_sol_2}, without impacting the resulting worst-case value. The symmetry in the solutions allows working with a limited number of variables $\fa^u$, $\Ga^u$, $\Gr^u$, and $\Gc^{uv}$ in PEP, depending only on the (smaller) number of equivalent classes of agents $\U$. This requires reformulating all the elements of the agent-dependent PEP (objective and constraints) in terms of these variables. Section \ref{sec:symPEP} shows how to perform such a reformulation when all the agents are equivalent in the PEP, which occurs in many usual settings. This leads to a PEP formulation with only one equivalence class and hence whose size is independent of the number of agents. Section \ref{sec:subsetsAgPEP} generalizes reformulation techniques from Section \ref{sec:symPEP} to a general number of equivalence classes.

\section{Agent-independent PEP formulation when all agents are equivalent} \label{sec:symPEP}

This section focuses on cases where all agents are equivalent.
\begin{assumption}[Equivalent Agents] \label{ass:equiva}
  All the agents are equivalent in the Performance Estimation Problem applied to a distributed algorithm, in the sense of Definition \ref{def:equiva}. Therefore, there is only one equivalence class which is $\V$. 
\end{assumption}

In other words, this assumption means that all the agents can be permuted in a PEP solution without impacting its worst-case value and thus no agent plays a specific role in the algorithm or its performance evaluation. This assumption is satisfied in many usual performance evaluation settings $\Setting_\N$, for which all agents play an identical role in the algorithm and its performance evaluation. Under Assumption \ref{ass:equiva}, the symmetrized solution from Proposition \ref{prop:sym_sol_2} only has a few different blocks that repeat.
\begin{corollary}[Fully symmetric PEP solutions] \label{cor:fullsymsol}
  When all the agents ($\N\ge 2$) are equivalent, see Assumption \ref{ass:equiva}, \new{any solution $(\f,G)$ of an agent-dependent (and LMI Gram-representable) PEP formulation for distributed optimization \eqref{eq:SDP_PEP}} can be fully symmetrized, without impacting its worst-case value:
  \begin{align}
    \f^s &= \qty[\fa^T \dots \fa^T] & \text{with }  ~ \fa &= \frac{1}{\N} \sum_{i=1}^\N f_i ~~\in \Rvec{q},&& \label{eq:Fs} \\
    G^s &= \begin{bmatrix}\Ga & \Gr & \dots \\ \Gr & \ddots & \\  \vdots& & \Ga \end{bmatrix}   & \text{with } ~ \Ga &= \frac{1}{\N} \sum_{i=1}^\N G_{ii} ~~\in \Rmat{p}{p},& \hspace{-3mm} \text{and} ~
    \Gr &= \frac{1}{\N(\N-1)} \sum_{i=1}^\N\sum_{j\ne i}^\N G_{ij} ~\in \Rmat{p}{p}. \label{eq:Gs}
  \end{align}
\end{corollary}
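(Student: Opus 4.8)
The plan is to obtain this as a direct specialization of Proposition \ref{prop:sym_sol_2} to the case of a single equivalence class. Under Assumption \ref{ass:equiva}, the partition $\T = \{\V_1,\dots,\V_\U\}$ induced by the equivalence relation from Definition \ref{def:equiva} collapses to $\U = 1$ with $\V_1 = \V$, so $u_i = 1$ and $\N_{u_i} = \N$ for every agent $i \in \V$. Proposition \ref{prop:sym_sol_2} then guarantees the existence of a symmetrized agent-class solution $(\Fss,\Gss)$ that is feasible for the PEP and satisfies $\P(\Fss,\Gss) = \P(F,G)$; it only remains to read off what formulas \eqref{eq:Fsym} and \eqref{eq:Gsym} become in this degenerate case.

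First I would substitute $\N_{u_i} = \N$ and $\V_{u_i} = \V$ into \eqref{eq:Fsym}: the block $f_i^s = \frac{1}{\N_{u_i}} \sum_{k \in \V_{u_i}} f_k$ becomes $\frac{1}{\N}\sum_{k=1}^\N f_k$, which is independent of $i$; calling this common value $\fa$ gives \eqref{eq:Fs}. Next I would treat the three cases of \eqref{eq:Gsym}. The diagonal case $i=j$ gives $G_{ii}^s = \frac{1}{\N}\sum_{k=1}^\N G_{kk} =: \Ga$, again the same for all $i$. The second case $j \ne i \in \V_{u_i}$ now covers \emph{all} off-diagonal blocks, since every other agent lies in the same (unique) class; it gives $G_{ij}^s = \frac{1}{\N(\N-1)}\sum_{k=1}^\N \sum_{l \ne k} G_{kl} =: \Gr$, independent of the pair $(i,j)$. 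The third case $j \in \V \setminus \V_{u_i}$ is vacuous because $\V \setminus \V = \emptyset$. Assembling these blocks yields exactly the block structure displayed in \eqref{eq:Gs}, with $\Ga$ on the diagonal and $\Gr$ everywhere off-diagonal. Feasibility and the equality of performance values are inherited verbatim from Proposition \ref{prop:sym_sol_2}.

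There is essentially no obstacle here — the statement is a corollary precisely because the work was done in Proposition \ref{prop:sym_sol_2}. The only point deserving a sentence of care is checking that the symmetrized $\Gss$ is still positive semidefinite and hence a valid Gram matrix, but this is already part of the conclusion of Proposition \ref{prop:sym_sol_2}, since $\Gss$ there is obtained as an average of the matrices $G_\Pi = (\Pi \otimes I_p)^T G (\Pi \otimes I_p) \succeq 0$ over all permutations $\Pi$ of the (single) equivalence class, and the PSD cone is convex. Thus the proof amounts to: invoke Assumption \ref{ass:equiva} to set $\U = 1$, apply Proposition \ref{prop:sym_sol_2}, and simplify \eqref{eq:Fsym}--\eqref{eq:Gsym} to \eqref{eq:Fs}--\eqref{eq:Gs}.
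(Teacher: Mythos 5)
Your proposal is correct and takes essentially the same route as the paper: the paper's proof also simply applies Proposition \ref{prop:sym_sol_2} with $\V$ as the sole equivalence class and observes that only two block types survive. Your version merely spells out the case analysis of \eqref{eq:Fsym}--\eqref{eq:Gsym} in more detail, which is fine.
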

\begin{proof} The results follow from applying Proposition \ref{prop:sym_sol_2} with $\V$ as the only equivalence class. Since there is only one equivalence class of agents, the Gram matrix only has two types of blocks, as explained in Definition \ref{def:blocks}.
\end{proof}

In this section, we consider the agent-dependent PEP formulation from Section \ref{sec:agentPEP}, with a given class of averaging matrix, and we constrain its solution to be fully symmetric, as expressed in \eqref{eq:Fs} and \eqref{eq:Gs}. \new{First, we explain how and when this leads to a compact PEP formulation with size independent of the number of agents. Secondly, we describe the implications of a fully symmetrized PEP solution on the worst-case local iterates and functions.}

\subsection{The PEP formulation restricted to fully symmetric solutions} \label{sec:PEPsymsol}

 When all agents are equivalent (Assumption \ref{ass:equiva}), Corollary \ref{cor:fullsymsol} shows that \new{restricting PEP to fully symmetric solutions} does not impact the value of the worst-case. We will see in Theorem \ref{thm:agent_indep_perf} that it allows writing the PEP in a compact form since the performance measure and the constraints can be expressed only in terms of the smaller blocks $\fa$, $\Ga$, and $\Gr$ that are repeating in $\f^s$ and $G^s$.
Theorem \ref{thm:agent_indep_perf} states sufficient conditions for which a PEP for distributed optimization can be made compact \new{(i.e. with size independent of $\N$)} and for which it provides a worst-case value independent of the number of agents $\N$.

\new{Before stating the theorem, we introduce two types of expressions or constraints that we can allow in PEP to obtain agent-independent worst-case values. Let us consider a simple example to motivate the first type of expressions.
\begin{example*}
If we consider a PEP for distributed optimization with the initial conditions $f_i(x_i^0) - f_i(x^*) \le 1$ for all $i \in \V$ and the performance measure $\P = \sum_{i=1}^\N (f_i(x_i^0) - f_i(x^*)),$
  then the worst-case value would scale with the number of agents $\N$ in the problem: if we consider more agents, the maximum value of $\P$ will increase.
  To obtain a PEP that provides worst-case values independent of $\N$, the performance measure needs to be an expression that is invariant to the number of agents, $\frac{1}{\N}\sum_{i=1}^\N (f_i(x_i^0) - f_i(x^*))$ in our example. We propose a notion called scale-invariant, defined below.
\end{example*}} \smallskip

\begin{definition}[Scale-invariant expression and constraint] \label{def:scale-invariant}
  In a distributed system with $\N$ agents, we call a Gram-representable expression $h$ scale-invariant if it can be written as a linear combination $h = \sum_{j} c_j h_j,$
  where coefficients $c_j$ are independent of $\N$ and terms $h_j$ are any of these three forms, for any variables $x_i$ and $y_j$ assigned to agents $i$ and $j$, including the case $x_i = y_j$: 
  \begin{align}
    \text{(a)} \quad& \frac{1}{\N} \sum_{i=1}^\N f_i(x_i), && \text{the average of function values,} \\
    \text{(b)} \quad& \frac{1}{\N} \sum_{i=1}^\N x_i^Ty_i, && \text{the average of scalar products between two variables of the same agent,} \\
    \text{(c)} \quad & \frac{1}{\N^2} \sum_{i=1}^\N \sum_{j=1}^\N x_i^Ty_j, && \text{the average over the $\N^2$ pairs of agents of the scalar products}\\[-5mm]
    &&& \text{between two variables, each assigned to any agent.}
  \end{align}
  A Gram representable constraint $h \le D$, for $D \in \R$, is scale-invariant if the expression $h$ is scale-invariant.
\end{definition}
\begin{remark} The name \emph{scale-invariant} comes from the fact that, if we duplicate each agent of the system, with its local variables and its local function, then the value of a scale-invariant expression is unchanged. For example, the following expressions can be shown, possibly after algebraic manipulations, to be scale-invariant: 
  \begin{align}
  \frac{1}{\N} \sum_{i=1}^\N \qty(f_i(\xb) - f_i(x^*)), \qquad&
  \frac{1}{\N} \sum_{i=1}^\N \|x_i - x^*\|^2, \qquad
  \frac{1}{\N} \sum_{i=1}^\N \|x_i - \xb\|^2, &
  \left\|\frac{1}{\N} \sum_{i=1}^\N \nabla f_i(x_i) \right\|^2.
  \end{align}
  The variables common to all agents, such as $\xb$ or $x^*$, can indeed be used in scale-invariant expressions because they are assumed to be copied in each agent set of variables:
  \[ \xb_i = \xb = \frac{1}{\N} \sum_{j=1}^\N x_j \quad \text{for all $i\in\V$} \qquad \text{ and } \qquad x^*_i = x^* \quad \text{for all $i\in\V$}. \]
  These constraints can then be written with scale-invariant expressions. See Appendix \ref{ap:explicit} for details.
\end{remark} \smallskip

\new{
Let us consider another simple example to motivate the second type of expressions we define.
\begin{example*}
  If we consider a PEP for distributed optimization with the initial conditions $\| x_i^0 - x^* \|^2 \le \N \quad \text{for all $i\in \V$}$ and the performance measure $\P = \frac{1}{\N} \sum_{i=1}^\N \|x_i^0 - x^* \|^2$, 
  then the worst-case value would scale with the number of agents $\N$ in the problem: if we consider more agents, the maximum value of $\P$ will increase.
  To obtain a PEP that provides worst-case values independent of the number of agents $\N$, the initial constraints involving only one agent at a time need to be independent of $\N$, $\| x_i^0 - x^* \|^2 \le 1$ in this example. We propose the notion of single-agent constraint, defined below.
\end{example*}} \smallskip

\begin{definition}[Single-agent expression and constraint] \label{def:single_agent}
  In a distributed system with $\N$ agents, we call an expression $h$ single-agent if it can be written as a linear combination $h = \sum_{k} c_k h_k(i),$
  where coefficients $c_k$ are independent of $\N$ and all terms $h_k(i)$ only involve the local variables and the local function of a single agent $i$. A constraint $h \le D$, for $D \in \R$, is single-agent if the expression $h$ is single-agent.
\end{definition}
\begin{remark}
  When all the agents are equivalent in the PEP, they all share the same single-agent constraints. Here are examples of single-agent constraints 
\[ \|x_i - x^*\|^2 \le 1, \qquad \text{for all $i \in \V$,} \qquad \qquad \|\nabla f_i(x_i)\|^2 \le 1,\qquad \text{for all $i \in \V$.} \]
\end{remark}
We can now state Theorem \ref{thm:agent_indep_perf} which characterizes the settings where the worst-case value $w(\Setting_\N)$ can be computed (i) in a compact manner for all $\N \ge 2$ and (ii) independently of the value of $\N$. We remind that $w(\Setting_\N=\{\N,\Al,\K,\P,\F,\I,\W\})$ denotes the worst-case performance of the execution of $\K$ iterations of distributed algorithm $\Al$ with $\N$ agents, with respect to the performance criterion $\P$, valid for any starting point satisfying the set of initial conditions $\I$, any local functions in the set of functions $\F$ and any averaging matrix in the set of matrices $\W$. \vspace{2mm}
\begin{theorem}[Agents-independent worst-case performance] \label{thm:agent_indep_perf}
  Let $\N \ge 2$ and $\W = \Wcl{\lm}{\lp}$ with $\lm \le \lp \in (-1,1)$. We assume that the algorithm $\Al \in \Ad$, the performance measure $\P$, the initial conditions $\I$, and the interpolation constraints for $\F$ are linearly (or LMI) Gram-representable. If Assumption \ref{ass:equiva} holds, meaning that all the agents are equivalent in the PEP, then 
  \begin{enumerate}
    \item[1.] The computation of $w(\Setting_\N)$ can be formulated as a compact SDP PEP, with $\fa \in \Rvec{q}$, $\Ga \in \Rmat{p}{p}$ and $\Gr \in \Rmat{p}{p}$ as variables and whose size is independent of the number of agents $\N$.
    \item[2.] If, in addition, the performance criterion $\P$ is scale-invariant and the set of initial conditions $\I$ only contains single-agent constraints, applied to every agent $i\in\V$, and scale-invariant constraints, then the resulting SDP PEP problem and its worst-case value $w(\Setting_\N)$ are fully independent of the number of agents $\N$.
\end{enumerate}
\end{theorem}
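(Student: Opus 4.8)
The plan is to realize the abstract Corollary~\ref{cor:fullsymsol} concretely: take the agent-dependent SDP PEP, substitute the fully symmetric ansatz \eqref{eq:Fs}--\eqref{eq:Gs}, and check that every ingredient (objective, algorithm constraints, interpolation constraints, initial conditions, optimality condition, and consensus constraints) can be rewritten using only $\fa$, $\Ga$, $\Gr$. For Part~1, the key observation is that any expression appearing in the PEP, when evaluated on a symmetric solution, only ever involves the ``diagonal'' blocks $\Ga$ (scalar products within one agent) and the ``off-diagonal'' block $\Gr$ (scalar products between two distinct agents of the class), plus the single block $\fa$ of function values, because all agents look the same. Concretely, I would proceed term by term: (i) linear combination operations in $\Ad$ act identically on each agent, hence produce constraints on $\fa$, $\Ga$, $\Gr$ through the coefficient vectors $\id_x$; (ii) the gradient operation $g_i=\nabla f_i(x_i)$ is just a relabeling of columns in $P_i$, so it imposes nothing new on the blocks; (iii) the interpolation constraints for $\F$ coupling agents $i$ and $i'$ become, for $i=i'$, constraints quadratic in $\id$-vectors applied to $\Ga$ and $\fa$, and for $i\ne i'$, constraints applied to $\Ga$, $\Gr$ and $\fa$ --- and by symmetry all same-agent copies coincide and all distinct-agent copies coincide, so only two representative interpolation inequalities survive per pair of iterate-indices; (iv) the optimality condition $\frac1n\sum_i\nabla f_i(x^*)=0$ becomes a statement about the average gradient, which on the symmetric solution is $\id_{g(x^*)}^T(\text{row-sum of }P)$, and this can be written as a linear-matrix condition on $\Ga,\Gr$ (the squared norm of the average gradient is $\frac1\N \id^T\Ga\id + \frac{\N-1}{\N}\id^T\Gr\id$, which vanishing forces the relevant combination to be $0$); (v) the positive semidefiniteness $G^s\succeq 0$ reduces, via the block-circulant-like structure $G^s = I_\N\otimes(\Ga-\Gr) + \mathbf{1}_\N\mathbf{1}_\N^T\otimes\Gr$ (after Kronecker-ing with $I_p$), to the two conditions $\Ga-\Gr\succeq 0$ and $\Ga+(\N-1)\Gr\succeq 0$; (vi) the consensus constraints of Theorem~\ref{thm:sufficiency} with $\S=\C$ are LMI Gram-representable and, being themselves permutation-symmetric in the agents, likewise collapse to conditions on $\Ga$, $\Gr$ --- here one uses that the projections onto $\C$ and $\Cp$ are computed from agent averages, which on a symmetric solution are explicit linear functions of $\fa$, $\Ga$, $\Gr$. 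Assembling all of this gives an SDP whose variables are exactly $\fa,\Ga,\Gr$ and whose constraint/objective data involve $\N$ only through the two scalars multiplying $\Gr$ in the PSD conditions \eqref{vi}-type and possibly in the objective/initial conditions; in particular its \emph{size} (number of variables and constraints) is independent of $\N$, proving Part~1.

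For Part~2 the point is to show that the remaining $\N$-dependence in the \emph{data} disappears under the scale-invariance and single-agent hypotheses. A scale-invariant objective or constraint is, by Definition~\ref{def:scale-invariant}, a fixed linear combination of terms of types (a), (b), (c); evaluated on the symmetric solution, type~(a) is $\id_{f(x)}^T\fa$, type~(b) is $\id_x^T\Ga\,\id_y$, and type~(c) is $\frac1\N\id_x^T\Ga\,\id_y + \frac{\N-1}{\N}\id_x^T\Gr\,\id_y$. The first two are manifestly $\N$-free. For type~(c) one must handle the factors $\frac1\N$ and $\frac{\N-1}{\N}$: I would combine the two PSD conditions from (v) to eliminate the explicit $\N$, or more simply observe that a scale-invariant expression is, by its defining ``duplication'' property stated in the Remark, literally unchanged if we replace the system of $\N$ agents by a system of $2$ agents with the \emph{same} blocks $\Ga,\Gr$ --- so we may as well fix $\N=2$ in every type-(c) coefficient, giving $\frac12\id_x^T(\Ga+\Gr)\id_y$, which is $\N$-free. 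Similarly, single-agent initial conditions, being applied identically to every agent and expressible purely in $\Ga$ and $\fa$ via the coefficient vectors, carry no $\N$; and scale-invariant initial conditions are handled as above. The only place where $\N$ could still enter is the pair of PSD constraints $\Ga-\Gr\succeq0$, $\Ga+(\N-1)\Gr\succeq0$ and the consensus LMIs. For these I would argue that feasibility is monotone: if $\Ga-\Gr\succeq0$ then $\Ga+(\N-1)\Gr = (\Ga-\Gr)+\N\Gr$, and one checks that the genuinely binding constraints coming from consensus already force $\Ga+\Gr\succeq$ (something) so that, together with $\Ga-\Gr\succeq0$, the $\N\ge2$ case and the $\N=2$ case have the same feasible set of $(\fa,\Ga,\Gr)$ --- equivalently, any $(\fa,\Ga,\Gr)$ feasible for $\N=2$ lifts to a genuine symmetric solution for any larger $\N$ (build the $\N$-agent Gram matrix from the blocks and verify PSD-ness, which is exactly conditions (v)), and conversely restricting to $\N=2$ loses nothing. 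Hence the optimal value is the same for all $\N\ge2$.

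The main obstacle I expect is step~(vi) together with the $\N$-dependence bookkeeping in step~(v): one must verify carefully that the consensus interpolation constraints of Theorem~\ref{thm:sufficiency}, which involve the projections $\Xp,\Yp$ onto $\Cp$ and a PSD constraint \eqref{eq:SDP_cons} of size $\K$, really do reduce to conditions on $\Ga,\Gr$ alone, and that the resulting $\N$-factors are exactly the ones that scale-invariance is designed to absorb. The cleanest route is probably to change basis once and for all into ``consensus'' and ``disagreement'' coordinates: write $\xb = \frac1\N\sum_i x_i$ and $\widetilde x_i = x_i - \xb$, note that on a symmetric solution $\xb^T\xb$, $\xb^T\widetilde x_i$ (which is forced to a single common value), and $\widetilde x_i^T\widetilde x_j$ are all explicit affine functions of $\id^T\Ga\id$ and $\id^T\Gr\id$ with coefficients involving only $\frac1\N$, and then check that \eqref{eq:subset_pres}--\eqref{eq:sym_cons} become fixed LMIs in these combined quantities. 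Once that reduction is in hand, combining it with the PSD reduction of $G^s$ and the objective/initial-condition reductions above yields the compact SDP of Part~1, and the scale-invariance argument of the previous paragraph promotes it to the $\N$-independent statement of Part~2.
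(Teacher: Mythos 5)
Your Part~1 follows the paper's route essentially verbatim: restrict to the fully symmetric solution of Corollary~\ref{cor:fullsymsol}, rewrite every PEP component in the blocks $\fa,\Ga,\Gr$, and reduce $G^s\succeq 0$ to $\Ga-\Gr\succeq 0$ and $\Ga+(\N-1)\Gr\succeq 0$ via the structure $G^s=I_\N\otimes(\Ga-\Gr)+\mathbf{1}_\N\mathbf{1}_\N^T\otimes\Gr$; this is exactly Lemma~\ref{lem:SDPGs} and the appendix computations, and it is correct.

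In Part~2 there is a genuine gap. The paper's key device is the change of variables $\Gt=\frac{1}{\N}\bigl(\Ga+(\N-1)\Gr\bigr)=\frac{1}{\N^2}\sum_{i,j}G_{ij}$, after which every type-(c) term becomes $\id_x^T\Gt\,\id_y$ and the PSD conditions become $\Gt\succeq 0$ and $\Ga-\Gt\succeq 0$ (Lemmas~\ref{lem:SDPGs2} and~\ref{lem:scale-invariant}) --- the problem is then \emph{literally} free of $\N$ in the coordinates $(\fa,\Ga,\Gt)$. Your substitutes for this step do not work. First, ``fix $\N=2$ in every type-(c) coefficient'' is unjustified: for \emph{fixed} blocks $(\Ga,\Gr)$ the quantity $\frac{1}{\N}\id_x^T\Ga\,\id_y+\frac{\N-1}{\N}\id_x^T\Gr\,\id_y$ genuinely depends on $\N$. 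The duplication invariance you invoke holds for an actual solution, but duplicating agents does \emph{not} preserve $\Gr$ (the new off-diagonal average mixes in copies of $\Ga$; one computes $\Gr'=\frac{\Ga+2(\N-1)\Gr}{2\N-1}\ne\Gr$), whereas it does preserve $\Gt$ --- which is precisely why $\Gt$ is the right variable. Second, your closing claim that ``the $\N\ge2$ case and the $\N=2$ case have the same feasible set of $(\fa,\Ga,\Gr)$'' is false: the constraint $\Ga+(\N-1)\Gr\succeq 0$ and the type-(c) coefficients carve out different sets in $(\Ga,\Gr)$-space for different $\N$. The correct statement is that these feasible sets are put in bijection by the $\N$-dependent map $(\Ga,\Gr)\mapsto(\Ga,\Gt)$, with objective value preserved, so the \emph{optimal values} coincide; a triple feasible for $\N=2$ lifts to larger $\N$ only after recomputing $\Gr=\frac{\N\Gt-\Ga}{\N-1}$. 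Once you insert this change of variables, the rest of your argument (single-agent constraints via $\fa,\Ga$; consensus constraints in consensus/disagreement coordinates, which become LMIs in $\Gt$ and $\Gd=\Ga-\Gt$) goes through and matches the paper.
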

 
\begin{corollary} \label{cor:N2}
  Under the conditions stated in part 2. of Theorem 
  \ref{thm:agent_indep_perf}, a general worst-case guarantee, valid for any $\N \ge 2$ (including $\N \to \infty$), can be obtained using $\N=2$:
  \[ w(\Setting_\N) = w(\Setting_2) \qquad \text{for all $\N \ge 2$}. \]
\end{corollary}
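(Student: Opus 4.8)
The plan is to derive the statement directly from part~2 of Theorem~\ref{thm:agent_indep_perf}; essentially all that remains is to spell out why an ``$\N$-independent worst-case value'' is the same thing as ``$w(\Setting_\N) = w(\Setting_2)$''. First I would recall precisely what part~2 delivers under the stated hypotheses (all agents equivalent, $\P$ scale-invariant, and $\I$ built only from single-agent constraints imposed on every agent together with scale-invariant constraints): the compact SDP obtained by restricting the agent-dependent PEP to fully symmetric solutions — with decision variables $\fa$, $\Ga$, $\Gr$ — has an objective and a list of constraints in which the number of agents $\N$ does not appear. Hence, for every integer $\N \ge 2$, it is literally one and the same optimization problem, and therefore has one and the same optimal value.

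Next I would invoke part~1 of the theorem, equivalently Corollary~\ref{cor:fullsymsol} together with Proposition~\ref{prop:sym_sol_2}: for each fixed $\N \ge 2$ this compact SDP has optimal value exactly $w(\Setting_\N)$, because symmetrizing a PEP solution over the single equivalence class $\V$ is without loss of generality. Combining the two observations, $\N \mapsto w(\Setting_\N)$ is constant on $\{2,3,\dots\}$. Instantiating at the smallest admissible value $\N = 2$ — which is also the smallest case in which the cross-agent block $\Gr$ is meaningful (cf.\ Remark~\ref{rem:blocks}), so that the compact SDP at $\N = 2$ is a bona fide two-agent PEP — yields $w(\Setting_\N) = w(\Setting_2)$ for all $\N \ge 2$.

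Finally, for the parenthetical ``$\N \to \infty$'': since $\N \mapsto w(\Setting_\N)$ equals the constant $w(\Setting_2)$ on $\{2,3,\dots\}$, its limit as $\N \to \infty$ is again $w(\Setting_2)$; equivalently, the compact, $\N$-free SDP can be read directly as a performance estimation problem for an arbitrarily large population of equivalent agents, with the same optimal value. I do not expect a genuine obstacle here, since all the work is carried out in Theorem~\ref{thm:agent_indep_perf}. If one had to name the only point requiring care, it is the identification of the compact SDP's value at $\N = 2$ with the genuine two-agent worst case $w(\Setting_2)$ — but this is exactly the content of Corollary~\ref{cor:fullsymsol}, which guarantees that the restriction to symmetric solutions costs nothing.
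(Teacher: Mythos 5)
Your proposal is correct and follows exactly the route the paper intends: the paper states Corollary \ref{cor:N2} as an immediate consequence of part 2 of Theorem \ref{thm:agent_indep_perf} (the compact SDP and its value are literally the same problem for every $\N \ge 2$, and by Corollary \ref{cor:fullsymsol} its value equals $w(\Setting_\N)$), so instantiating at $\N=2$ gives the claim. Your spelling-out of the identification at $\N=2$ and of the $\N\to\infty$ limit adds detail the paper leaves implicit but introduces nothing different in substance.
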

\begin{remark}
  The conditions of part 2 of Theorem 
  \ref{thm:agent_indep_perf}, which guarantee agent-independent worst-case performance, are satisfied for many usual settings, as illustrated in Appendix \ref{ap:explicit}.
\end{remark} \smallskip

To prove Theorem \ref{thm:agent_indep_perf}, we rely on Lemma \ref{lem:SDPGs} which gives a way of imposing the SDP condition $G^s \succeq 0$ solely based on its block matrices $\Ga$ and $\Gr$.
\begin{lemma} \label{lem:SDPGs}
Let $G^s \in \Rmat{\N p}{\N p}$ be a fully symmetrized Gram matrix, as defined in  \eqref{eq:Gs}, for $\N \ge 2$. The SDP constraint $G^s \succeq 0$ can be expressed with $\Ga \in \Rmat{p}{p}$ and $\Gr \in \Rmat{p}{p}$:
\begin{equation} \label{eq:Gsequiv}
    G^s \succeq 0 \qquad \Leftrightarrow \qquad \Ga + (\N-1)\Gr \succeq 0 \quad \text{and} \quad \Ga-\Gr \succeq 0.
\end{equation}
\end{lemma}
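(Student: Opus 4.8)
The plan is to diagonalize the highly structured matrix $G^s$ by exploiting its invariance under the permutation group $S_\N$ acting on the $\N$ blocks of size $p$. Writing $G^s = I_\N \otimes (\Ga - \Gr) + (\mathbf{1}_\N\mathbf{1}_\N^T) \otimes \Gr$, I would note that the first term acts on the orthogonal complement of the all-ones vector $\mathbf{1}_\N$ in $\Rvec{\N}$ and the second on its span. Concretely, pick an orthogonal matrix $U \in \Rmat{\N}{\N}$ whose first column is $\mathbf{1}_\N/\sqrt{\N}$; then $(U \otimes I_p)^T G^s (U \otimes I_p)$ is block-diagonal with one block equal to $\Ga + (\N-1)\Gr$ (the ``consensus'' direction, where $\mathbf{1}_\N^T\mathbf{1}_\N = \N$ contributes the factor $\N$) and $\N-1$ identical blocks equal to $\Ga - \Gr$ (each direction orthogonal to $\mathbf{1}_\N$, where $\mathbf{1}_\N^T$ kills the second term).

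From there the equivalence is immediate: since $U \otimes I_p$ is orthogonal, $G^s \succeq 0$ if and only if this block-diagonal matrix is positive semidefinite, which holds if and only if each diagonal block is positive semidefinite, i.e. $\Ga + (\N-1)\Gr \succeq 0$ and $\Ga - \Gr \succeq 0$ (the latter appearing with multiplicity $\N-1 \ge 1$, so it is a genuine constraint exactly because $\N \ge 2$). I would spell out the change of basis slightly: one does not even need an explicit $U$, since it suffices to observe that the eigenvalues/eigenspaces of the scalar matrix $I_\N + $ rank-one perturbation are standard, and that $G^s$ is the corresponding ``matrix-weighted'' version whose spectrum is the union of the spectra of $\Ga + (\N-1)\Gr$ and of $\Ga - \Gr$ (with multiplicity $\N-1$).

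I do not anticipate a serious obstacle here; the only point requiring a little care is making the tensor/Kronecker bookkeeping rigorous — namely checking that conjugating $G^s$ by $U \otimes I_p$ really does decouple into the claimed blocks, and that $\mathbf{1}_\N\mathbf{1}_\N^T$ maps to $\operatorname{diag}(\N, 0, \dots, 0)$ under conjugation by $U$. An alternative, equally clean route that avoids Kronecker products altogether is to take an arbitrary test vector $v = (v_1, \dots, v_\N)$ with each $v_i \in \Rvec{p}$, compute $v^T G^s v = \sum_i v_i^T \Ga v_i + \sum_{i \ne j} v_i^T \Gr v_j = \sum_i v_i^T(\Ga - \Gr) v_i + \big(\sum_i v_i\big)^T \Gr \big(\sum_i v_i\big)$, and then show this quadratic form is nonnegative for all $v$ precisely when both $\Ga - \Gr \succeq 0$ and $\Ga + (\N-1)\Gr \succeq 0$: the ``if'' direction follows by a Cauchy–Schwarz/convexity bound relating $\|\sum_i v_i\|^2$-type terms to $\sum_i\|v_i\|^2$, and the ``only if'' direction by plugging in $v_i$ all equal (forcing $\Ga + (\N-1)\Gr \succeq 0$) and $v$ summing to zero, e.g. $v_1 = w$, $v_2 = -w$, rest zero (forcing $\Ga - \Gr \succeq 0$, which is where $\N \ge 2$ is used). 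I would likely present whichever of these two is shorter, probably the Kronecker-diagonalization argument.
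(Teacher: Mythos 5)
Your proposal is correct. Your second, ``test vector'' route is essentially the paper's own proof: the authors take an arbitrary $z\in\Rvec{\N p}$, split each block as $z_i=\zb+\zc_i$ with $\sum_i\zc_i=0$, obtain exactly the identity $z^TG^sz=\zb^T\qty(\N\Ga+\N(\N-1)\Gr)\zb+\sum_i(\zc_i)^T(\Ga-\Gr)\zc_i$, and then prove necessity with the same two test choices you name (all $\zc_i=0$, and $\zb=0$ with $\zc_1=-\zc_2=s$). Your preferred Kronecker route, writing $G^s=I_\N\otimes(\Ga-\Gr)+\mathbf{1}_\N\mathbf{1}_\N^T\otimes\Gr$ and conjugating by $U\otimes I_p$ with first column $\mathbf{1}_\N/\sqrt{\N}$, is the same orthogonal decomposition packaged as a block-diagonalization; what it buys is that both implications fall out simultaneously from ``block-diagonal is PSD iff each block is,'' avoiding the separate sufficiency argument and the explicit test vectors, and making transparent that $\Ga-\Gr$ appears with multiplicity $\N-1$ (hence why $\N\ge2$ is needed). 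The only caveat is that in your sketch of the ``if'' direction for the test-vector route you should either use the $\zb/\zc$ split directly (as the paper does, which makes the cross terms vanish) or spell out the Cauchy--Schwarz step $\qty(\sum_iv_i)^TA\qty(\sum_iv_i)\le\N\sum_iv_i^TAv_i$ for $A=\Ga-\Gr\succeq0$ after writing $\Gr=\frac{1}{\N}\qty[(\Ga+(\N-1)\Gr)-(\Ga-\Gr)]$; both work, so this is a presentational gap only, not a mathematical one.
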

\begin{proof}
  We will use the fact that
  \[ G^s \succeq 0 \quad \Leftrightarrow \quad z^TG^sz \ge 0 \quad \text{for all $z \in \Rvec{\N p}$}.\]
  Let us separate the vector $z \in \Rvec{\N p}$ into $\N$ sub-vectors $z_i$ ($i=1,\dots,\N$) that can each be written as the sum of an average part $\zb \in \Rvec{p}$ and a centered one $\zc_i$: 
    \[ z_i = \zb + \zc_i, \qquad \text{where $\zb = \frac{1}{\N}\sum_{i=1}^\N z_i$ and $\sum_{i=1}^\N \zc_i = 0$.}\]
   Using this decomposition of $z$, together with the definition of $G^s$ \eqref{eq:Gs}, we can express $z^TG^sz$ as follows
 \begin{align*}
    \begin{bmatrix} \zb + \zc_1 \\ \vdots \\ \zb + \zc_\N \end{bmatrix}^T \begin{bmatrix}\Ga & \Gr & \dots \\ \Gr & \ddots & \\  \vdots& & \Ga \end{bmatrix} \begin{bmatrix} \zb + \zc_1 \\ \vdots \\ \zb + \zc_\N \end{bmatrix}
    &= \sum_{i=1}^\N\sum_{j=1}^\N (\zb + \zc_i)^T \Gr (\zb + \zc_j) + \sum_{i=1}^\N (\zb + \zc_i)^T (\Ga-\Gr) (\zb + \zc_i), \\
    \text{Since $\sum_{i=1}^\N \zc_i = 0$,} \hspace{3cm} & = \N^2 \zb^T (\Gr) \zb + \N \zb^T (\Ga - \Gr) \zb + \sum_{i=1}^\N (\zc_i)^T (\Ga-\Gr) (\zc_i), \\
    & = \zb^T \qty(\N \Ga + \N(\N-1) \Gr) \zb + \sum_{i=1}^\N (\zc_i)^T (\Ga-\Gr) (\zc_i).
\end{align*}
Therefore, $z^TG^sz \ge 0$ for all $z \in \Rvec{\N p}$ when 
\begin{equation} \label{eq:proof_Gs}
  \zb^T \qty(\N \Ga + \N(\N-1) \Gr) \zb + \sum_{i=1}^\N (\zc_i)^T (\Ga-\Gr) (\zc_i) \ge 0, \quad \text{for all $\zb, \zc_i \in \Rvec{p}$ such that $\sum_{i=1}^\N \zc_i = 0$.}
\end{equation}  
Conditions $\Ga+(\N-1)\Gr \succeq 0$ and $\Ga-\Gr \succeq 0$ are sufficient to ensure that \eqref{eq:proof_Gs} is satisfied and hence that $G^s \succeq 0$. To show the necessity of these conditions, let us consider different cases for $\zb$ and $\zc_i$:
\begin{itemize}
  \item When $\zc_i = 0$ for all $i$, then \eqref{eq:proof_Gs} simply imposes that $\zb^T \qty(\N \Ga + \N(\N-1) \Gr) \zb \ge 0$ for all $\zb \in \Rvec{p}$, which is equivalent to $\Ga+(\N-1)\Gr \succeq 0$.
  \item When $\zb = 0$, $\zc_1 = - \zc_2 = s$, for an arbitrary $s \in \Rvec{p}$, and $\zc_i = 0$ for $i=3,\dots,\N$, then \eqref{eq:proof_Gs} imposes that $s^T (\Ga-\Gr)s \ge 0$ for all $s \in \Rvec{p}$, which is equivalent to $\Ga-\Gr \succeq 0$.
\end{itemize}
\end{proof}

\begin{proof}[Proof of Theorem \ref{thm:agent_indep_perf} (part 1)]
  As detailed in Section \ref{sec:consensusPEP}, the set of averaging matrices $\Wcl{\lm}{\lp}$ has Gram-representable necessary interpolation constraints, see Corollary \ref{cor:conscons}. Since $\Al,\P,\I$ and interpolation constraints for $\F$ are also Gram-representable,  Proposition \ref{prop:GramPEP} guarantees that the computation of $w(\N,\Al,\K,\P,\I,\F, \Wcl{\lm}{\lp})$ can be formulated as an agent-dependent SDP PEP problem \eqref{eq:SDP_PEP}. Moreover, since the agents are equivalent, we can restrict to fully symmetric solutions without impacting the worst-case value (see Corollary \ref{cor:fullsymsol}).
  Hence, all the PEP components can be written in terms of $\f^s$ and $G^s$, which are composed of blocks $\fa$, $\Ga$, and $\Gr$. Lemma \ref{lem:SDPGs} shows how the SDP constraint $G^s \succeq 0$, can be expressed in terms of $\Ga$ and $\Gr$. The other elements of the PEP can directly be expressed in terms of $\fa$, $\Ga$, and $\Gr$, using the definition of $\f^s$ and $G^s$ \eqref{eq:Fs} \eqref{eq:Gs}, as detailed in Appendix \ref{ap:explicit}. 
  Since all the PEP elements can be expressed in terms of blocks $\fa\in \Rvec{q}$, $\Ga \in \Rmat{p}{p}$ and $\Gr \in \Rmat{p}{p}$, the problem can be made smaller by considering them as the variables of the SDP PEP, instead of the full matrix $G^s \in \Rmat{\N p}{\N p}$ and vector $\f^s \in \Rvec{\N q}$. The size of the problem is thus independent of the number of agents $\N$, but $\N$ could still appear as a coefficient in the constraints or the objective of the problem.
\end{proof}

Lemma \ref{lem:SDPGs} makes the value of $\N$ appear as a coefficient, but we can define a change of variables allowing to remove the dependency on $\N$ in the new constraints \eqref{eq:Gsequiv}. Let us defined the new matrix variables $\Gt \in \Rmat{p}{p}$ as follows:
\begin{equation} \label{eq:GT}
    \Gt = \frac{1}{\N}\qty(\Ga + (\N-1)\Gr). 
\end{equation}
Moreover, we can also express $\Gt$
directly based on the blocks of the initial Gram matrix $G$ \eqref{eq:initialGram}:
\begin{equation} \label{eq:def_GT}
  \Gt = \frac{1}{\N^2}\sum_{i=1}^\N\sum_{j=1}^\N G_{ij}.
\end{equation}
While $\Ga$ contains the average scalar products between local variables of the same agent, and $\Gr$ the average scalar products between local variables of different agents, this new block $\Gt$ contains the average scalar products between local variables of any two agents. We call $\Gt$ the collective block matrix.
By applying this change of variables \eqref{eq:GT} in Lemma \ref{lem:SDPGs}, we obtain conditions that are independent of $\N$:
\begin{lemma}[Constraint $G^s \succeq 0$] \label{lem:SDPGs2}
  Let $G^s \in \Rmat{\N p}{\N p}$ be a symmetric Gram matrix, as defined in  \eqref{eq:Gs}. The SDP constraint $G^s \succeq 0$ can be expressed with $\Ga \in \Rmat{p}{p}$ and $\Gt \in \Rmat{p}{p}$:
  \begin{equation} \label{eq:Gsequiv2}
      G^s \succeq 0 \qquad \Leftrightarrow \qquad \Gt \succeq 0 \quad \text{and} \quad \Ga-\Gt \succeq 0.
  \end{equation}
\end{lemma}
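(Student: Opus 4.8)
The plan is to obtain Lemma \ref{lem:SDPGs2} directly from Lemma \ref{lem:SDPGs} by substituting the change of variables \eqref{eq:GT}, so essentially no new work is needed beyond a careful algebraic translation. First I would recall that Lemma \ref{lem:SDPGs} states
\[
G^s \succeq 0 \quad\Longleftrightarrow\quad \Ga + (\N-1)\Gr \succeq 0 \ \text{ and }\ \Ga - \Gr \succeq 0,
\]
and that \eqref{eq:GT} defines $\Gt = \frac{1}{\N}\bigl(\Ga + (\N-1)\Gr\bigr)$. Since $\N \ge 2$, the factor $\frac{1}{\N}$ is a positive scalar, so $\Ga + (\N-1)\Gr \succeq 0$ is equivalent to $\Gt \succeq 0$, which disposes of the first condition immediately.

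For the second condition I would show that $\Ga - \Gr \succeq 0$ is equivalent to $\Ga - \Gt \succeq 0$. The key step is to invert \eqref{eq:GT} to express $\Gr$ in terms of $\Ga$ and $\Gt$: from $\N\Gt = \Ga + (\N-1)\Gr$ we get $\Gr = \frac{1}{\N-1}\bigl(\N\Gt - \Ga\bigr)$ (here $\N \ge 2$ guarantees $\N-1 \ge 1 > 0$, so this is legitimate). Substituting,
\[
\Ga - \Gr = \Ga - \frac{\N\Gt - \Ga}{\N-1} = \frac{(\N-1)\Ga - \N\Gt + \Ga}{\N-1} = \frac{\N(\Ga - \Gt)}{\N-1},
\]
and since $\frac{\N}{\N-1} > 0$, we conclude $\Ga - \Gr \succeq 0 \Leftrightarrow \Ga - \Gt \succeq 0$. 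Combining the two equivalences yields \eqref{eq:Gsequiv2}.

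This proof is routine and I do not anticipate a genuine obstacle; the only point requiring a moment of care is checking that all the scalar prefactors appearing ($\tfrac{1}{\N}$, $\tfrac{1}{\N-1}$, $\tfrac{\N}{\N-1}$) are strictly positive, which holds precisely because $\N \ge 2$, so that multiplying a positive-semidefinite matrix by them preserves positive semidefiniteness and the equivalences are not one-directional. One could optionally remark that \eqref{eq:def_GT} gives $\Gt$ directly as $\frac{1}{\N^2}\sum_{i,j} G_{ij}$, consistent with \eqref{eq:GT} via the definitions \eqref{eq:Gs}, but this identity is not needed for the proof of the lemma itself.
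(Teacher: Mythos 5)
Your proof is correct and follows exactly the paper's route: the paper's own proof is the one-line remark that the result follows by applying the change of variables \eqref{eq:GT} to the conditions of Lemma \ref{lem:SDPGs}, and your argument simply carries out that substitution explicitly, including the correct inversion $\Gr = \frac{1}{\N-1}(\N\Gt - \Ga)$ and the check that the scalar prefactors are positive for $\N \ge 2$.
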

\begin{proof}
  The result can be obtained by applying change of variables \eqref{eq:GT} in the constraints stated in \eqref{eq:Gsequiv}.
\end{proof}

The new variable $\Gt$ allows writing many constraints without any dependence on $\N$. 
Let us state a lemma that will be used to prove the second part of Theorem \ref{thm:agent_indep_perf}.

\begin{lemma} \label{lem:scale-invariant}
  In an SDP PEP for distributed optimization \eqref{eq:SDP_PEP}, with equivalent agents (Assumption \ref{ass:equiva}), any scale-invariant Gram-representable expression can be written independently of $\N$, using variables $\fa$, $\Ga$, and $\Gt$. 
\end{lemma}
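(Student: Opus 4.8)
The statement to prove is Lemma~\ref{lem:scale-invariant}: every scale-invariant Gram-representable expression can be rewritten in terms of the compact blocks $\fa$, $\Ga$, and $\Gt$ without any dependence on $\N$. By Definition~\ref{def:scale-invariant}, such an expression is a finite linear combination $h = \sum_j c_j h_j$ with $\N$-independent coefficients $c_j$, where each $h_j$ is one of the three elementary forms (a), (b), (c). Since the target representation is itself a linear combination, it suffices to treat each of the three forms separately and exhibit, for each, an $\N$-independent expression in $\fa,\Ga,\Gt$ that equals it on any fully symmetrized solution $(F^s,G^s)$. The plan is therefore to go form-by-form, using the definitions \eqref{eq:Fs}, \eqref{eq:Gs}, \eqref{eq:def_GT} of the symmetrized blocks together with the coefficient-vector notation $\id_x$, $\id_{f(x)}$ introduced in Section~\ref{sec:equicl}.

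First I would handle form (a): the average of function values $\frac{1}{\N}\sum_{i=1}^\N f_i(x_i) = \frac{1}{\N}\sum_{i=1}^\N \id_{f(x)}^T f_i = \id_{f(x)}^T \big(\frac{1}{\N}\sum_i f_i\big) = \id_{f(x)}^T \fa$ by definition \eqref{eq:Fs}, which has no $\N$. Next, form (b): $\frac{1}{\N}\sum_{i=1}^\N x_i^T y_i = \frac{1}{\N}\sum_i \id_x^T G_{ii}\id_y = \id_x^T\big(\frac{1}{\N}\sum_i G_{ii}\big)\id_y = \id_x^T \Ga\, \id_y$ by \eqref{eq:Gs}; again $\N$-free. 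Finally, form (c): $\frac{1}{\N^2}\sum_{i=1}^\N\sum_{j=1}^\N x_i^T y_j = \id_x^T\big(\frac{1}{\N^2}\sum_{i,j} G_{ij}\big)\id_y = \id_x^T \Gt\, \id_y$ directly by the alternative definition \eqref{eq:def_GT} of the collective block. Each elementary form thus becomes an $\N$-independent bilinear form in $\fa,\Ga,\Gt$, and summing with the $c_j$ gives $h = \sum_j c_j h_j$ expressed without $\N$.

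A point worth making explicit is why the variables common to all agents (such as $\xb$ and $x^*$) cause no trouble: by the convention of Section~\ref{sec:equicl} they are copied identically into every agent block, so $\id_{\xb}^T P_i = \xb$ and $\id_{x^*}^T P_i = x^*$ for all $i$, and the manipulations above go through verbatim with $x_i$ or $y_j$ replaced by such common variables (the cross terms between a common variable and a per-agent variable collapse onto form (b) or (c) as appropriate). I would also remark that a few natural quantities — e.g. $\|\xb - x^*\|^2$ or $\|\frac{1}{\N}\sum_i \nabla f_i(x_i)\|^2$ — are scale-invariant only after algebraic rearrangement into forms (a)–(c); these reductions are routine and are spelled out in Appendix~\ref{ap:explicit}, so I would simply invoke that.

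The only mild subtlety — and the nearest thing to an obstacle — is bookkeeping rather than mathematics: one must be careful that the decomposition $h=\sum_j c_j h_j$ guaranteed by scale-invariance uses coefficients that are genuinely constant in $\N$, so that after substituting the three $\N$-free bilinear forms no hidden $\N$ reappears; and one must confirm that the rewriting respects the Gram-representable structure, i.e. that $\id_x^T\Ga\,\id_y$, $\id_x^T\Gt\,\id_y$ and $\id_{f(x)}^T\fa$ are exactly the kind of linear expressions in the compact variables that the compact SDP admits. Both are immediate from Definition~\ref{def:scale-invariant} and from the block definitions, so the proof is short; it is essentially the observation that forms (a), (b), (c) are, respectively, the defining averages of $\fa$, $\Ga$, and $\Gt$.
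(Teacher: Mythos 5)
Your proposal is correct and follows essentially the same route as the paper's own proof: after restricting to the fully symmetrized solution, you treat the three elementary forms (a), (b), (c) of Definition~\ref{def:scale-invariant} one by one and identify them with $\id_{f(x)}^T\fa$, $\id_x^T\Ga\,\id_y$, and $\id_x^T\Gt\,\id_y$ via the definitions \eqref{eq:Fs}, \eqref{eq:Gs}, and \eqref{eq:def_GT}, exactly as in the paper. The additional remarks on common variables and on the $\N$-independence of the coefficients are consistent with the paper's treatment and add no gap.
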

\begin{proof}
  By Corollary \ref{cor:fullsymsol}, since all the agents are equivalent, we can restrict the PEP to a fully symmetric solution $(\f^s, G^s)$ from \eqref{eq:Fs}-\eqref{eq:Gs}.
  Since the expression is scale-invariant and Gram-representable (Definition \ref{def:scale-invariant}), it can only combine three types of terms, with coefficients independent of $\N$,
  \begin{itemize}
    \item[(a)] The average of function values 
    \begin{equation} \label{eq:usefa}
      \frac{1}{\N} \sum_{i=1}^\N f_i(x_i) =  \frac{1}{\N} \sum_{i=1}^\N\id_{f(x)}^T f_i = \id_{f(x)}^T \fa, \qquad \text{for any $j=1,\dots,\N$,}
    \end{equation}
    where $\id_{f(x)}$ is the vector of coefficients such that $\id_{f(x)}^T f_i = f_i(x_i)$. The second equality holds because $f_i = \fa$ for all $i$, by definition of the agent-symmetric $\f^s$ \eqref{eq:Fs}.
    \item[(b)] The average of scalar products between two variables related to the same agent $i$:
    \begin{equation} \label{eq:useGa}
        \frac{1}{\N} \sum_{i=1}^\N x_i^Ty_i = \frac{1}{\N} \sum_{i=1}^\N \id_x^T G_{ii} \id_y = \id_x^T (\Ga) \id_y,
    \end{equation}
    where $\id_x$ (resp. $\id_y$) is the vector of coefficients such that $P_i \id_x = x_i$ (resp. $P_i \id_y = y_i$), with $P_i$ defined in \eqref{eq:defPi}. The second equality holds because $G_{ii} = \Ga$, by definition of the agent-symmetric $G^s$ \eqref{eq:Gs}.
    \item[(c)] The average over the $\N^2$ pairs of agents of the scalar products between two variables, each related to any agent:
    \begin{equation} \label{eq:useGt}
        \frac{1}{\N^2} \sum_{i=1}^\N \sum_{j=1}^\N x_i^Ty_j = \frac{1}{\N^2} \sum_{i=1}^\N \sum_{j=1}^\N \id_x^T G_{ij} \id_y = \id_x^T (\Gt) \id_y,
    \end{equation}
    where the last equality follows from the definition of $\Gt$ \eqref{eq:def_GT}.
  \end{itemize}
  We have shown that the three types of terms (a), (b), (c) can be written independently of $\N$ with $\fa$, $\Ga$, and $\Gt$, and so can any Gram-representable and scale-invariant expression.
\end{proof}
We are now ready to prove the second part of Theorem \ref{thm:agent_indep_perf}, about the worst-case value being independent of the number of agents.
\begin{proof}[Proof of Theorem \ref{thm:agent_indep_perf} (part 2)]
Under the conditions stated in the theorem (part 2.), we will show that the SDP-PEP problem can be written, independently of $\N$, with $\fa$, $\Ga$, and $\Gt$ as variables.
When all the agents are equivalent in the PEP, we can restrict to fully symmetric solutions (Corollary \ref{cor:fullsymsol}).
In that case, Lemma \ref{lem:SDPGs2} shows that SDP constraints $G^s \succeq 0$ can be expressed with $\Ga$ and $\Gt$, independently of $\N$.

Then, we prove that, in the symmetrized PEP, any single-agent constraint (Definition \ref{def:single_agent})
can be written with $\fa$ and $\Ga$, independently of $\N$. 
Since the PEP components are all Gram-representable, any of its single-agent constraints for agent $i$ linearly combines local function values $f_i(x_i)$ and scalar products of local variables $x_i^Ty_i$, with coefficients that are independent of $\N$. Here, $x_i$ and $y_i$ denote any local vector held by agent $i$. By definition of the symmetric $\f^s$ \eqref{eq:Fs}, a local function value term can be expressed with $\fa$ as
\begin{equation} \label{eq:usefa_single}
  f_i(x_i) = \id_{f(x)}^T f_i = \id_{f(x)}^T \fa, \qquad \text{for any $i=1,\dots,\N$.}
 \end{equation}
By definition of the symmetric Gram $G^s$ \eqref{eq:Gs}, a local scalar product term can be expressed with $\Ga$ as
 \begin{equation} \label{eq:useGa_single}
  x_i^Ty_i = \id_x^T G_{ii} \id_y  = \id_x^T \Ga \id_y \qquad \text{for any $i=1,\dots,\N$,}
\end{equation}
We have shown that the two types of terms \eqref{eq:usefa_single} and \eqref{eq:useGa_single} can be written independently of $\N$ with $\fa$, $\Ga$, and so can any Gram-representable single-agent constraint. The resulting constraint, expressed with $\fa$ and $\Ga$, is valid for any agent $i \in \V$. This is consistent with the equivalent agents assumption, which requires that each single-agent constraint is applied similarly to every agent $i\in\V$.
The reformulation of single-agent constraints, independently of $\N$, applies to the interpolation constraints for $\F$, the algorithm constraints for $\Al \in \Ad$ (except consensus), and part of the initial constraints $\I$.

We now show that the other PEP components, see e.g. \eqref{eq:PEP}, can also be expressed independently of $\N$. When agents are equivalent, by Lemma \ref{lem:scale-invariant}, any Gram-representable and scale-invariant expression can be expressed with $\fa$, $\Ga$, and $\Gt$, independently of $\N$. As explained in the proof of part 1 of the theorem, all the PEP components are Gram-representable. Therefore, we will simply prove that all the PEP components that cannot be expressed as single-agent constraints, treated above, can be written with scale-invariant expressions. 
The performance measure $\P$ and the rest of the initial conditions $\I$ are scale-invariant, by assumption of the theorem statement. The optimality condition for \eqref{opt:dec_prob}, can be written in a Gram-representable manner as
\[ \left\|\frac{1}{\N} \sum_{i=1}^\N \nabla f_i(x^*) \right\|^2 = 0, \]
which is well scale-invariant thanks to the $1/\N$ factor.
Finally, the interpolation constraints for consensus steps with averaging matrices from $\Wcl{\lm}{\lp}$ are given by \eqref{eq:avg_pres2}, \eqref{eq:SDP_cons2} and \eqref{eq:sym_cons2}, from Corollary \ref{cor:conscons}. In these constraints, notations hide sums over the agents and can be written with Gram-representable and scale-invariant expressions as 
\begin{align}
  \overline{X} = \overline{Y} \quad &\Leftrightarrow \quad \qty(\frac{1}{\N} \sum_{i=1}^\N (X_i-Y_i))^T\qty(\frac{1}{\N} \sum_{j=1}^\N (X_j-Y_j)) = 0, \\
  (\Yp-\lm\Xp)^T(\Yp-\lp\Xp) \preceq 0 \quad &\Leftrightarrow \quad   \frac{1}{\N}\sum_{i=1}^\N \qty((Y_i - \overline{Y})-\lm(X_i - \overline{X}))^T\qty((Y_i - \overline{Y})-\lp(X_i - \overline{X})) \preceq 0, \\ 
  \Xp^T \Yp - \Yp^T\Xp = 0  \quad &\Leftrightarrow \quad  \frac{1}{\N}\sum_{i=1}^\N (X_i - \overline{X})^T(Y_i - \overline{Y}) -  \frac{1}{\N}\sum_{i=1}^\N (Y_i - \overline{Y})^T(X_i - \overline{X}) = 0.
\end{align}
where matrices $X_i, Y_i \in \Rmat{d}{\K}$ contain the different consensus iterates $x_i^k, y_i^k \in \Rvec{d}$ ($k=1,\dots,\K$) of agent $i$ as columns. The two last constraints have been multiplied by $1/\N$ to obtain scale-invariant expressions. This does not alter these constraints that have left-hand sides equal to zero. 

In the end, all the PEP components can well be expressed independently of $\N$, with $\fa\in \Rvec{q}$, $\Ga\in \Rmat{p}{p}$ and $\Gt\in \Rmat{p}{p}$. We refer to Appendix \ref{ap:explicit} for explicit expressions of different possible PEP constraints and objectives. Therefore, the SDP PEP can be made smaller and fully independent of the number of agents $\N$, by considering these blocks as variables of the problem, instead of the full matrix $G^s \in \Rmat{\N p}{\N p}$ and vector $\f^s \in \Rvec{\N q}$.
\end{proof}

\subsection{Impact of agent symmetry on the worst-case solution} \label{sec:impactSym}
\new{By definition of the fully symmetric PEP solution \eqref{eq:Fs}-\eqref{eq:Gs}, in which the Gram matrix has equal diagonal blocks $\Ga$ and equal off-diagonal blocks $\Gr$, we can prove symmetry results for the worst-case iterates and functions of the agents. These symmetries open up perspectives with implications for understanding, analyzing and designing distributed algorithms yet to be discovered.}
\begin{proposition}
\label{prop:impact_sym} 
  If all the agents are equivalent in the agent-dependent PEP (Assumption \ref{ass:equiva}), then there is a worst-case instance such that,  
  \begin{itemize}
    \item the worst-case sequences of iterates of the agents $x_i^0, \dots, x_i^\K \in \Rvec{d}$ ($i=1,\dots,\N$) are unitary transformations, i.e. rotations or reflections, of each other:
    \[   x_i^k = R_i x_1^k \qquad \text{with $R_i \in \Rmat{d}{d}$ such that $R_i^TR_i = I$,} \hspace{2cm} \text{for all $i \in \V$ and all $k$}.  \]
    \item the worst-case local functions have equal values at their own iterates $f_i(x_i^k) = f_j(x_j^k)$ (for all $i,j \in \V$ and all $k$), and can be chosen identical up to a unitary transformation of variables:
   \[ f_i(x) = f_1(R_i^T x) \qquad \text{for all $i \in \V$.}\]
  \end{itemize}

\end{proposition}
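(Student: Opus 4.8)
The plan is to start from the fully symmetric worst-case solution $(F^s, G^s)$ whose existence is guaranteed by Corollary \ref{cor:fullsymsol}, and to reconstruct from its Gram blocks $\fa$, $\Ga$, $\Gr$ an explicit instance of local iterates and functions realizing the claimed unitary structure. The key observation is that for a symmetrized solution, every diagonal block of $G^s$ equals $\Ga$ and every off-diagonal block equals $\Gr$. So if I build the vector of all iterates $P = [P_1 \ \dots \ P_\N]$ via a Cholesky-type factorization $G^s = P^T P$, the blocks $P_i$ all have the \emph{same} Gram matrix $P_i^T P_i = \Ga$; hence by the standard fact that two matrices with identical Gram matrices differ by an orthogonal transformation acting on the left (on the ambient space $\Rvec{d}$, enlarging $d$ if necessary), there exist $R_i \in \Rmat{d}{d}$ with $R_i^T R_i = I$ such that $P_i = R_i P_1$, i.e. $x_i^k = R_i x_1^k$ for every variable and every $k$. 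I would normalize $R_1 = I$.

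**Key steps.** First, invoke Corollary \ref{cor:fullsymsol} to get $(F^s,G^s)$ feasible with the same worst-case value. Second, factor $G^s = P^T P$ with $P = [P_1 \ \dots \ P_\N]$, and use the identity $P_i^T P_i = G^s_{ii} = \Ga$ for all $i$ to produce the orthogonal maps $R_i$ with $P_i = R_i P_1$; the cross-block consistency $P_i^T P_j = \Gr$ for $i\neq j$ is automatically respected since $P_i^T P_j = P_1^T R_i^T R_j P_1$, which need not equal $P_1^T P_1$ — and indeed we only need to recover \emph{some} worst-case instance, so I just keep the instance coming directly from the Cholesky factor of $G^s$ rather than imposing extra relations. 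Third, for the function statement: since $F^s$ has $f_i = \fa$ for all $i$, in particular $f_i^k = f_j^k$ for all $i,j$, giving $f_i(x_i^k) = f_j(x_j^k)$. Fourth, to get the functions themselves identical up to the rotation: the triplets $\{(x_i^k, g_i^k, f_i^k)\}_k = \{(R_i x_1^k, R_i g_1^k, f_1^k)\}_k$ are $\F$-interpolable (they are obtained from the interpolable set of agent $1$ by the orthogonal change of variables $x \mapsto R_i x$, and the interpolation conditions for the classes we consider — e.g. $\F_{\mu,L}$ — are invariant under orthogonal transformations of the domain since they only involve norms and inner products). Hence one may take $f_i(x) := f_1(R_i^T x)$ as the interpolating function for agent $i$, which satisfies $f_i(x_i^k) = f_1(R_i^T R_i x_1^k) = f_1(x_1^k) = f_i^k$ and $\nabla f_i(x_i^k) = R_i \nabla f_1(x_1^k) = R_i g_1^k = g_i^k$, as required.

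**Main obstacle.** The delicate point is the "two matrices with the same Gram matrix differ by a left orthogonal transformation" step: strictly this holds after possibly embedding in a larger ambient dimension, and one must make sure the $R_i$ can be taken as genuine square orthogonal matrices on a common $\Rvec{d}$ — which is fine here since the PEP worst-case is taken over all dimensions $d$, so we may always work in a dimension large enough to accommodate a unified factorization and then extend each $P_i$ to the same row space. I also need to check that the interpolation constraints for $\F$ and the consensus/algorithm constraints are preserved under replacing agent $i$'s data by an orthogonally-rotated copy of agent $1$'s data; for the function-class constraints this is the rotation-invariance of the interpolation inequalities, and for the consensus constraints \eqref{eq:subset_pres}--\eqref{eq:sym_cons} it follows because they are expressed purely through the symmetrized Gram blocks, which are unchanged by construction. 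The rest is bookkeeping: confirming that the constructed instance attains the same performance value $\P$, which holds because $\P$ is Gram-representable and evaluated on exactly the blocks $\fa,\Ga,\Gr$ that define $(F^s,G^s)$.
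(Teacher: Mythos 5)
Your proposal is correct and follows essentially the same route as the paper's proof: invoke Corollary \ref{cor:fullsymsol} to obtain a fully symmetric worst-case solution, deduce from $P_i^TP_i = \Ga = P_1^TP_1$ that $P_i = R_iP_1$ for orthogonal $R_i$, and define $f_i(x) = f_1(R_i^Tx)$, checking it interpolates the rotated data. Your explicit remarks on the rotation-invariance of the interpolation constraints and on working in a common ambient dimension are points the paper leaves implicit, but they do not change the argument.
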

\begin{remark}[]
  Proposition \ref{prop:impact_sym} is stated for the worst-case iterates, but is also valid for any linear combinations of columns of $P_i$ \eqref{eq:defPi}. Indeed, as detailed in the proof below, we have $P_i = R_i P_1$, with $R_i \in \Rmat{d}{d}$ such that $R_i^TR_i = I$, for all $i=1,\dots,\N$. 
  Moreover, the result is also valid for more than one agent at a time and can be written for any groups of agents, e.g. for groups of two agents, we have:
  \[ \qty[P_i ~ P_j] = \tilde{R}_{ij} \qty[P_1 ~ P_2] \qquad \text{with $\tilde{R}_{ij} \in \Rmat{d}{d}$ such that $\tilde{R}_{ij}^T\tilde{R}_{ij} = I$,} \hspace{1.5cm} \text{for all $i,j \in \V$ and all $k$}.\]
  This holds by definition of the fully symmetrized Gram matrix $G^s$ \eqref{eq:Gs} which has equal diagonal blocks: $G_{ii} = \Ga$ for all $i\in\V$, but also equal off-diagonal blocks: $G_{ij} = \Gr$ for $i\ne j \in \V$. 
\end{remark}
\begin{proof}
  By Corollary \ref{cor:fullsymsol}, when the agents are all equivalent, there is a worst-case solution of the agent-dependent PEP, which is fully symmetric: $(\f,G) = (\f^s, G^s)$.  When the solution is agent-symmetric, by definition of $G^s$ \eqref{eq:Gs}, we have
  \begin{equation} \label{eq:ag_sym}
     G_{ii} = P_i^TP_i = P_j^TP_j = G_{jj} \quad \text{for all $i,j \in \V$,} 
  \end{equation}
  which imposes that matrices of agent variables $P_i$ are all unitary transformations of each other. Without loss of generality, we can express all the $P_i$ depending on $P_1$:
    \begin{equation} \label{eq:iso1}
      P_i = R_i P_1 \quad \text{ with } R_i^TR_i = I, \qquad \text{for all $i \in \V$.}
    \end{equation} 
  Matrices $R_i$ are not especially unique. This unitary transformation relation is valid for any column or combination of columns of $P_i$ and $P_1$, so we have it for iterates and gradient vectors of the agents:
  \begin{align} \label{eq:iso2}
    x_i^k &= R_i x_1^k & g_i^k &= R_i g_1^k \quad \text{ with } R_i^TR_i = I, \qquad \text{for all $i \in \V$ and all $k$.}
  \end{align}
  Concerning the local functions, by definition of $\f^s$ \eqref{eq:Fs}, we have, for all $i,j \in \V$,
  \begin{equation} \label{eq:ag_sym2}
    f_i = f_j \quad \text{where $f_i \in \Rvec{q}$ is a vector containing the function values related to agent $i$.}
  \end{equation}
  In particular, we have well that $f_i(x_i^k) = f_j(x_j^k)$, for all $i,j \in \V$ and all $k$.
  Let $f_1$ be a function interpolating the set of triplets $\{x_1^k, g_1^k, f_1^k\}_{k=0,\dots,\K}$. 
  We can choose $f_i$, the local function of agent $i$, as follows:
  \[ f_i(x) = f_1(R_i^T x),\]
  Indeed, $f_i$ interpolates well the set of triplets $\{x_i^k, g_i^k, f_i^k\}_{k=0,\dots,\K}$
  since \eqref{eq:iso2} and \eqref{eq:ag_sym2} guarantee that $R_i^T x_i^k = x_1^k$, $R_i^T g_i^k = g_1^k$ and $f_i^k = f_1^k$, which are well interpolated by $f_1$ by definition.
\end{proof}

\section{Compact PEP with multiple equivalence classes of agents}
\label{sec:subsetsAgPEP}

While all agents are equivalent (Assumption \ref{ass:equiva}) for many common performance evaluation settings, there are advanced settings for which there are several equivalence classes of agents, see Definition \ref{def:equicl}. For example, when different groups of agents use different (uncoordinated) step-sizes, function classes, initial conditions, or even different algorithms. It can also happen that the performance measure focuses on a specific group of agents, e.g. the performance of the worst agent. The ability to efficiently and accurately evaluate the performance of distributed optimization algorithms in such advanced settings would enable a more comprehensive analysis and deeper understanding of the algorithms performance.

In this section, we exploit symmetries in different equivalence classes of agents in PEP, revealed in Proposition \ref{prop:sym_sol_2}, to write agent-dependent PEPs in a compact form whose size only depends on the number of equivalence classes $\U$, and not on the total number of agents $\N$.
By Proposition \ref{prop:sym_sol_2}, we can solve the agent-dependent PEP problem, described in Section \ref{sec:agentPEP}, restricted to symmetric agent-class solutions $(\Fss,\Gss)$, without impacting the worst-case value. These symmetric solutions $(\Fss,\Gss)$, defined in \eqref{eq:Fsym} and \eqref{eq:Gsym}, depend on the partition $\T$ of the set of agents $\V$ into equivalence classes: $\T = \{ \V_1,\dots,\V_\U \}$, see Definition \ref{def:equicl}. 
A symmetrized solution $(\Fss,\Gss)$ has equal blocks for equivalent agents. In this section, we order the agents by equivalence classes in the symmetrized solution $(\Fss,\Gss)$ \eqref{eq:Fsym}-\eqref{eq:Gsym} \new{so that we can partition it into different blocks, associated with the different agent classes. For example, if we have two equivalence classes ($\U=2$): one with two agents and one with one agent, the symmetrized solution can be written as follows: \vspace{-1mm}
\begin{align}
  \Fss &= \begin{bmatrix} \fa^1 & \fa^1 & \fa^2 \makebox(-33,0){\rule[1ex]{0.4pt}{\normalbaselineskip}}\end{bmatrix}^T &\quad &\text{ and }& \quad 
  \Gss &= \left[ \begin{array}{cc|c}
                          \Ga^1 & \Gr^1 & \Gc^{12} \\
                          \Gr^1 & \Ga^1 & \Gc^{12} \\[1mm] 
                          \hline && \\[-3mm]
                          \Gc^{21} & \Gc^{21} & \Ga^{2}
          \end{array} \right],
  \end{align}
  where $\fa^1$, $\fa^2$, $\Ga^1$, $\Gr^1$, $\Ga^{2}$, $\Gc^{12}$ and $\Gc^{21}$ are blocks defined in Definiton \ref{def:blocks}. We define new notations to aggregate the blocks of each equivalence class: \vspace{-1mm}
  \begin{align}
  \Fss &= \begin{bmatrix} \f_{\V_1} & \f_{\V_2} \end{bmatrix}^T &\quad &\text{ and }& \quad 
  \Gss &= \begin{bmatrix}
                          G_{\V_1}^s & G_{\V_1\V_2}^s \\
                          G_{\V_2\V_1}^s & G_{\V_2}^s
                        \end{bmatrix},
\end{align} 
In general, when ordering the agents by equivalence classes,} the symmetrized solution $(\Fss,\Gss)$ \eqref{eq:Fsym}-\eqref{eq:Gsym} can be written as
\begin{align}
  \Fss &= \begin{bmatrix} \f_{\V_1} &\dots& \f_{\V_\U} \end{bmatrix}^T && \hspace{-0.2cm}
  \text{where }  \f_{\V_u} = \mathbf{1}_{\N_u} \kron  \fa^u \label{eq:Fsu} \\
  \Gss &= \begin{bmatrix}
                          G_{\V_1}^s  & G_{\V_1\V_2}^s & \dots & G_{\V_1\V_\U}^s \\
                          G_{\V_2\V_1}^s & G_{\V_2}^s  &  & \vdots \\
                          \vdots   &          & \ddots&  \\
                          G_{\V_\U\V_1}^s  & \dots & & G_{\V_\U}^s
                        \end{bmatrix}
& & \hspace{-0.2cm} \parbox[c]{10cm}{
\text{where }
~ \text{$G_{\V_u}^s = \begin{cases} 
  \Ga^u & \text{if $\N_u = 1$,} \\
  \mathbf{1}_{\N_u} \mathbf{1}_{\N_u}^T \kron \Gr^u + (I_{\N_u} \kron (\Ga^u - \Gr^u)) & \text{if $\N_u \ge 2$},   
\end{cases}$} \\[4mm] 
\hspace{-0.3cm} \text{and} \quad \text{$G_{\V_u\V_v}^s  = \mathbf{1}_{\N_u} \mathbf{1}^T_{\N_v} \kron \Gc^{uv}$},
} \hspace{2mm} \label{eq:Gsu}
\end{align}
and where $\fa^u \in \Rvec{q}$, $\Ga^u, \Gr^u, \Gc^{uv} \in \Rmat{p}{p}$ for $u,v=1,\dots,\U$ are the blocks repeating in $\Fss$ and $\Gss$, see Definition \ref{def:blocks}. 

\begin{remark}[Limit cases for $\U$] ~
  \begin{itemize} 
    \item $\U=\N$:
    When no agents are equivalent, the partition $\T$ contains $\U=\N$ equivalence classes of size 1 and then the symmetrized agent-class solution $\Fss$, $\Gss$, defined in Proposition \ref{prop:sym_sol_2} is identical to the agent-dependent PEP solution $\f$, $G$:
    \[ \Fss = F, \qquad \text{ and } \qquad \Gss = G. \]
    Therefore, the resulting PEP formulation is equivalent to the agent-dependent PEP from Section \ref{sec:agentPEP} and is not compact as its size scales with $\U=\N$.
    \item $\U=1$:  
    When all the agents are equivalent, the partition $\T$ contains $\U=1$ equivalence class of size $\N$ and then the symmetrized agent-class solution $\Fss$, $\Gss$ defined in Proposition \ref{prop:sym_sol_2} is identical to the fully symmetrized solution $\f^s$, $G^s$ defined in Corollary \ref{cor:fullsymsol}
    \[ \Fss = \f^s, \qquad \text{ and } \qquad \Gss = G^s. \]
    Therefore, the resulting compact PEP formulation is identical to the one presented in Section \ref{sec:symPEP}.
  \end{itemize}
\end{remark}

\subsection{The PEP formulation restricted to symmetric agent-class solutions} \label{sec:PEP_restrict_setsym}
Based on the results from Proposition \ref{prop:sym_sol_2}, the definitions of the symmetric PEP solutions $\Fss$ \eqref{eq:Fsu} and $\Gss$ \eqref{eq:Gsu}, 
we now show when and how an agent-dependent PEP can be expressed in a compact form, using only the blocks $\fa^u$, $\Ga^u$, $\Gr^u$, and $\Gc^{uv}$ ($u,v=1,\dots,\U$) as variables. This will enable efficient computing of $w(\Setting_\N)$ for all $\N\ge2$, even when all the agents are not equivalent. We remind that $w(\Setting_\N=\{\N,\Al,\K,\P,\F_u,\I,\W\})$ denotes the worst-case performance of the execution of $\K$ iterations of distributed algorithm $\Al$ with $\N$ agents, with respect to the performance criterion $\P$, valid for any starting point satisfying the set of initial conditions $\I$, any local functions being each in a given set of function $f_i \in \F_{u_i}$ and any averaging matrix in the set of matrices $\W$. \vspace{2mm}

\begin{theorem}[Agent-class worst-case performance] \label{thm:sets_perf}
    Let $\V$ be a set of $\N \ge 2$ agents, $\T$ be a partition of $\V$ into $\U \le n$ equivalence classes of agents, as defined in Definition \ref{def:equiva}, and $\W = \Wcl{\lm}{\lp}$ with $\lm \le \lp \in (-1,1)$. \\
    If the algorithm $\Al \in \Ad$, the performance measure $\P$, the initial conditions $\I$, and the interpolation constraints for each $\F_u$ ($u=1,\dots,\U$) are linearly (or LMI) Gram representable, then the computation of $w(\Setting_\N)$ can be formulated as a compact SDP PEP,
  with $\fa^u$, $\Ga^u$, $\Gr^u$, and $\Gc^{uv}$ ($u,v=1\dots,\U$) as variables and, hence, whose size only depends on the number of agent equivalence classes $\U$ but not directly on the total number of agents $\N$.
\end{theorem}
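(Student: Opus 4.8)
The plan is to mirror the proof of part~1 of Theorem~\ref{thm:agent_indep_perf}, but now carrying the reduction through with $\U$ equivalence classes instead of a single one. First I would invoke Proposition~\ref{prop:GramPEP}: since $\Al,\P,\I$ and the interpolation constraints for each $\F_u$ are (LMI) Gram-representable, and since the consensus steps over $\Wcl{\lm}{\lp}$ admit the Gram-representable necessary constraints of Theorem~\ref{thm:sufficiency} applied to the consensus subspace $\C$, the worst-case $w(\Setting_\N)$ is the optimal value of an agent-dependent SDP PEP in the variables $F\in\Rvec{\N q}$, $G\in\Rmat{\N p}{\N p}$ as in \eqref{eq:SDP_PEP}. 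Then I would apply Proposition~\ref{prop:sym_sol_2}: because the partition $\T=\{\V_1,\dots,\V_\U\}$ is exactly the one induced by the equivalence relation of Definition~\ref{def:equiva}, restricting the feasible set to symmetrized agent-class solutions $(\Fss,\Gss)$ of the form \eqref{eq:Fsu}--\eqref{eq:Gsu} does not change the optimal value. Hence every component of the PEP can be rewritten in terms of the repeating blocks $\fa^u\in\Rvec{q}$, $\Ga^u,\Gr^u,\Gc^{uv}\in\Rmat{p}{p}$, and the claim reduces to showing that each such rewriting can actually be carried out with a finite set of linear/LMI constraints whose number and sizes depend only on $\U$ and $p,q$, not on $\N$.

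The work then splits by type of constraint. The single-agent constraints (interpolation for $\F_{u_i}$, the non-consensus algorithm steps of Definition~\ref{def:Ad}, and the single-agent part of $\I$) involve only $f_i$ and scalar products $x_i^Ty_i$ of variables of one agent $i\in\V_u$; by \eqref{eq:Fsu}--\eqref{eq:Gsu} these equal $\id_{f(x)}^T\fa^u$ and $\id_x^T\Ga^u\id_y$, so each becomes a single constraint per class $u$, exactly as in the proof of part~2 of Theorem~\ref{thm:agent_indep_perf} but now class-indexed. The coupling constraints — the consensus interpolation constraints \eqref{eq:subset_pres}--\eqref{eq:sym_cons}, the optimality condition, and any inter-class terms in $\P$ or $\I$ — involve sums over agents, e.g. $\xb=\frac1\N\sum_i x_i = \frac1\N\sum_u \N_u(\,\cdot\,)$, $\frac1\N\sum_i x_i^Ty_i$, and cross terms $\frac1{\N^2}\sum_{i,j}x_i^Ty_j$; on a symmetrized solution these collapse into weighted combinations of the finitely many blocks, with weights $\N_u/\N$ (and $\N_u(\N_u-1)/\N^2$, $\N_u\N_v/\N^2$) that are scalars once the class sizes are fixed. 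The objective is likewise a finite linear functional of these blocks. So the rewritten problem has a fixed number $O(\U)$ of scalar variables $\fa^u$, $O(\U^2)$ matrix variables $\Ga^u,\Gr^u,\Gc^{uv}$ of fixed size $p\times p$, and a number of constraints growing with $\U$ only.

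The one genuinely nontrivial piece is the positive-semidefiniteness constraint $\Gss\succeq0$: $\Gss$ is an $\N p\times\N p$ matrix, and we must replace $\Gss\succeq0$ by an equivalent family of LMIs on the blocks $\Ga^u,\Gr^u,\Gc^{uv}$ of fixed size, independent of $\N$. I would handle this by the standard block-circulant/exchangeability argument generalizing Lemma~\ref{lem:SDPGs}: within each diagonal block $G_{\V_u}^s = \mathbf 1_{\N_u}\mathbf 1_{\N_u}^T\kron\Gr^u + I_{\N_u}\kron(\Ga^u-\Gr^u)$, decompose $\Rvec{\N_u p}$ into the "average" direction $\mathbf 1_{\N_u}/\sqrt{\N_u}\otimes\Rvec p$ and the complementary "centered" subspace; the off-diagonal blocks $\mathbf 1_{\N_u}\mathbf 1_{\N_v}^T\kron\Gc^{uv}$ only couple the average directions across classes, so $\Gss$ is block-diagonalized by the orthogonal change of basis that separates average from centered parts in every class. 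This yields: (i) for each class $u$ with $\N_u\ge2$, the centered block gives $\Ga^u-\Gr^u\succeq0$; and (ii) a single $\U p\times\U p$ "collective" matrix built from $\Gt^u := \tfrac1{\N_u}(\Ga^u+(\N_u-1)\Gr^u)$ on the diagonal and $\sqrt{\N_u\N_v}\,\Gc^{uv}/\sqrt{\N_u\N_v}$ — i.e. just $\Gc^{uv}$ after absorbing the norming — off the diagonal must be PSD. Both conditions have size depending only on $\U$ and $p$. I expect this PSD-reduction step to be the main obstacle: getting the exact form of the block-diagonalization right (in particular tracking the $\sqrt{\N_u}$ normalizations and the correct placement of $\Gt^u$ versus $\Gc^{uv}$) requires care, whereas everything else is a routine, if tedious, linear-algebraic bookkeeping of sums over agents. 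Once the PSD condition is reduced, assembling all the pieces gives the claimed compact SDP whose size depends on $\U$ but not directly on $\N$.
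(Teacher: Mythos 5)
Your proposal is correct and follows essentially the same route as the paper: invoke Proposition \ref{prop:GramPEP}, restrict to the symmetrized agent-class solutions of Proposition \ref{prop:sym_sol_2}, rewrite single-agent and coupling constraints via the repeating blocks, and reduce $\Gss\succeq 0$ through the average/centered decomposition, which is exactly the content of the paper's Lemma \ref{lem:SDPGssets}. Your ``collective'' matrix (with $\Gt^u$ on the diagonal and $\Gc^{uv}$ off-diagonal, despite the garbled $\sqrt{\N_u\N_v}$ bookkeeping in your write-up) is a diagonal congruence of the paper's $\Ht$ and hence an equivalent PSD condition, and the $\N_u=1$ case ($\Ga^u\succeq 0$, no $\Gr^u$) is the only detail you leave implicit.
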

\begin{remark}
  The blocks $\Gr^u$ contain scalar products between variables of different, but equivalent, agents. Therefore, $\Gr^u$ does not exist for equivalence classes with only one agent $\N_u = 1$.
  The blocks $\Gc^{uv}$ contain scalar products between variables of different agents that are not equivalent. Therefore, these blocks do not exist when $\U = 1$.
  The other blocks $\fa^u$ and $\Ga^u$, related to only one agent, always exist.
\end{remark}
To prove this theorem, we rely on the following lemma which reformulates the SDP condition $G^s \succeq 0$ with the blocks $\Ga^u$, $\Gr^u$, and $\Gc^{uv}$, and generalizes Lemma \ref{lem:SDPGs}.
\begin{lemma} \label{lem:SDPGssets}
Let $\Gss \in \Rmat{\N p}{\N p}$ be a symmetrized agent-class Gram matrix, as defined in \eqref{eq:Gsu}. The SDP constraint $\Gss \succeq 0$ can be expressed with $\Ga^u \in \Rmat{p}{p}$, $\Gr^u  \in \Rmat{p}{p}$ and $\Gc^{uv}  \in \Rmat{p}{p}$, $u,v=1\dots,\U$:
\begin{equation} \label{eq:SDP_Gsym}
    \Gss \succeq 0 \qquad \Leftrightarrow \qquad \begin{cases}
      \Ga^u \succeq 0 & \text{for all $u$ with $\N_u=1$,} \\
      \Ga^u-\Gr^u \succeq 0 & \text{for all $u$ with $\N_u\ge 2$,}
    \end{cases} 
    \qquad \text{ and } \qquad \Ht \succeq 0,
\end{equation}
where $\Ht \in \Rmat{\U p}{\U p}$ is a symmetric matrix composed of $\U^2$ blocks of dimension $p \times p$:
\begin{equation}
  \Ht^{uv} = \begin{cases}
     \N_u\Ga^u + \N_u(\N_u-1)\Gr^u, &\text{when $u=v$,} \\
     \N_u\N_v \Gc^{uv}, &\text{when $u\ne v$}
\end{cases}. \qquad u,v = 1,\dots,\U. \label{eq:def_H}
\end{equation}
\end{lemma}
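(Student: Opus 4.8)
The plan is to adapt the argument from the proof of Lemma~\ref{lem:SDPGs} to the multi-class setting. I characterise $\Gss \succeq 0$ through the condition $z^T \Gss z \ge 0$ for all $z \in \Rvec{\N p}$, ordering the components of $z$ by equivalence class, $z = [z_{\V_1}^T, \dots, z_{\V_\U}^T]^T$. Within each class $\V_u$ I split every sub-vector $z_i$ ($i \in \V_u$) into a class-average part $\zb^u = \frac{1}{\N_u}\sum_{i \in \V_u} z_i \in \Rvec{p}$ and a centered part $\zc_i^u$ with $\sum_{i \in \V_u}\zc_i^u = 0$, so that $z_i = \zb^u + \zc_i^u$.

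First I would expand $z^T \Gss z = \sum_{u,v=1}^\U z_{\V_u}^T G_{\V_u\V_v}^s z_{\V_v}$ block by block, using the structure \eqref{eq:Gsu}. Each off-diagonal block gives $z_{\V_u}^T (\mathbf{1}_{\N_u}\mathbf{1}_{\N_v}^T \kron \Gc^{uv}) z_{\V_v} = \N_u\N_v\,(\zb^u)^T \Gc^{uv} \zb^v$; each diagonal block with $\N_u \ge 2$ gives, after substituting $z_i = \zb^u + \zc_i^u$ and using $\sum_{i\in\V_u}\zc_i^u = 0$ to kill the mixed terms, $(\zb^u)^T\big(\N_u\Ga^u + \N_u(\N_u-1)\Gr^u\big)\zb^u + \sum_{i\in\V_u}(\zc_i^u)^T(\Ga^u-\Gr^u)\zc_i^u$; and each singleton block gives simply $(\zb^u)^T \Ga^u \zb^u$. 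Collecting the averages into $\zeta = [(\zb^1)^T, \dots, (\zb^\U)^T]^T \in \Rvec{\U p}$ and recognising the coefficients of \eqref{eq:def_H}, this yields the identity
\[ z^T \Gss z \;=\; \zeta^T \Ht \zeta \;+\; \sum_{u:\, \N_u \ge 2}\ \sum_{i \in \V_u} (\zc_i^u)^T (\Ga^u - \Gr^u)\, \zc_i^u . \]

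From this identity both directions follow as in Lemma~\ref{lem:SDPGs}. For sufficiency, if $\Ht \succeq 0$ and $\Ga^u - \Gr^u \succeq 0$ for every class with $\N_u \ge 2$ (and $\Ga^u \succeq 0$ for singletons, which is in any case the principal block $\Ht^{uu}$ of $\Ht$), then both summands on the right are nonnegative for every $z$, hence $\Gss \succeq 0$. For necessity, I would plug in well-chosen $z$: taking all centered parts to zero leaves $z^T\Gss z = \zeta^T\Ht\zeta$ with $\zeta$ arbitrary, forcing $\Ht \succeq 0$ (and in particular $\Ga^u = \Ht^{uu} \succeq 0$ for singletons); and, for a fixed class $u$ with $\N_u \ge 2$, taking all averages to zero and setting $\zc_{i_1}^u = s$, $\zc_{i_2}^u = -s$ for two distinct $i_1, i_2 \in \V_u$ with all other centered parts zero leaves $z^T\Gss z = 2\, s^T(\Ga^u - \Gr^u)\,s$ with $s$ arbitrary, forcing $\Ga^u - \Gr^u \succeq 0$.

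The main obstacle is the block-expansion bookkeeping: correctly handling the Kronecker structure of the blocks $G_{\V_u}^s$ and $G_{\V_u\V_v}^s$, verifying the cancellation of the average/centered cross terms within each class, and checking that the collected class-average contributions assemble exactly into the block matrix $\Ht$ of \eqref{eq:def_H}. Once that identity is in hand, the equivalence is immediate and generalises Lemma~\ref{lem:SDPGs}, which is recovered as the special case $\U = 1$.
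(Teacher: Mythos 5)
Your proposal is correct and follows essentially the same route as the paper's proof: the same per-class decomposition of $z$ into a class average $\zb_u$ and a centered remainder, the same block-by-block expansion yielding the identity $z^T\Gss z = \zb^T\Ht\zb + \sum_{u}\sum_{i\in\V_u}(\zc_i^u)^T(\Ga^u-\Gr^u)\zc_i^u$, and the same choices of test vectors for necessity. Your observation that for singleton classes $\Ga^u = \Ht^{uu}$ is a principal block of $\Ht$ (so that condition is subsumed by $\Ht \succeq 0$) is a minor, correct refinement of the paper's convention of setting $\Gr^u = 0$ when $\N_u = 1$.
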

\begin{proof}
  We have that
  \[ \Gss \succeq 0 \quad \Leftrightarrow z^T\Gss z \ge 0 \quad \text{for all \new{vector} $z \in \Rvec{\N p}.$}\]
  A vector $z \in \Rvec{\N p}$ can be divided into $\U$ sub-vectors of length $\N_u p$ to match with the blocks of matrix $\Gss$ (see Proposition \ref{prop:sym_sol_2}). We also decompose each sub-vector $z_u$ in two terms: the average of $z_u$ over the agents from the class $\V_u$, and the remaining part of $\zc_u$:
  \[z_u = \mathbf{1}_{\N_u} \kron \zb_u + \zc_u \quad \text{ with } \quad  \zb_u = \frac{1}{\N_u}(\mathbf{1}_{\N_u} \kron I_p)^T z_u, \qquad \zb_u \in \Rvec{p},~ \zc_u \in \Rvec{\N_up} \text{ and }  z_u \in \Rvec{\N_up}. \]
  By definition, we have that $\frac{1}{\N_u}(\mathbf{1}_{\N_u} \kron I_p)^T \zc_u = 0$ for all $j=1,\dots,\U$. We now compute $z^T\Gss z$ \new{(for any vector $z \in \Rvec{np}$)} by summing over each block:
  \begin{equation}
     z^T\Gss z = \sum_{u=1}^\U z_u^T G_{\V_u}^s z_u + \sum_{u=1}^\U \sum_{v\ne u} z_u^T G_{\V_u \V_v}^s z_v, \label{eq:block_sum}
\end{equation}
  where $G_{\V_u}^s$ and $G_{\V_u \V_v}^s$ are defined in \eqref{eq:Gsu}. In the definition, we have two cases for $G_{\V_u}^s$, depending on the value of $\N_u$. In the rest of the proof, we assume $\N_u \ge 2$ and only work with $G_{\V_u}^s = \mathbf{1}_{\N_u} \mathbf{1}_{\N_u}^T \kron \Gr^u + (I_{\N_u} \kron (\Ga^u - \Gr^u))$. The special case for $\N_u=1$, $G_{\V_u}^s = \Ga^u$, can be adapted to this case $\N_u \ge 2$, by defining $\Gr^u = 0$ when $\N_u=1$.
  Here is the development for each term of \eqref{eq:block_sum}:
  \begin{align}
    z_u^T G_{\V_u}^s z_u &= \qty(\mathbf{1}_{\N_u} \kron \zb_u + \zc_u)^T \qty(\mathbf{1}_{\N_u} \mathbf{1}_{\N_u}^T \kron \Gr^u + (I_{\N_u} \kron (\Ga^u - \Gr^u))) \qty(\mathbf{1}_{\N_u} \kron \zb_u + \zc_u ) \\
    &= \zb_u^T \N_u^2 \Gr^u \zb_u +  \zb_u^T \N_u(\Ga^u - \Gr^u) \zb_u + (\zc_u)^T \qty(I_{\N_u} \kron (\Ga^u-\Gr^u)) \zc_u,\\
    &= \zb_u^T (\N_u \Ga^u + \N_u(\N_u-1) \Gr^u) \zb_u + (\zc_u)^T \qty(I_{\N_u} \kron (\Ga^u-\Gr^u)) \zc_u, \\[2mm]
    z_u^T G_{\V_u \V_v}^s z_v &= \qty(\mathbf{1}_{\N_u} \kron \zb_u + \zc_u)^T \qty(\mathbf{1}_{\N_u} \mathbf{1}_{\N_v}^T \kron \Gc^{uv} ) \qty(\mathbf{1}_{\N_v} \kron \zb_v + \zc_v ) \\
    &=  \zb_u \N_u\N_v \Gc^{uv} \zb_v.
  \end{align}
  Using these expressions, we can write \eqref{eq:block_sum} as a sum of quadratic forms
  \[ z^T\Gss z = \zb^T \Ht \zb + \sum_{u=1}^\U (\zc_u)^T \qty(I_{\N_u} \kron (\Ga^u-\Gr^u)) \zc_u,\]
  where $\zb \in \Rvec{\U p}$ is a vector stacking the average vectors $\zb_u$ of each equivalence class ($u=1,\dots,\U$), and $\Ht \in \Rmat{\U p}{\U p}$ is the matrix defined in \eqref{eq:def_H}. Therefore, the constraint $\Gss \succeq 0$ can be expressed as
  \begin{equation}
    \zb^T \Ht \zb + \sum_{u=1}^\U (\zc_u)^T \qty(I_{\N_u} \kron (\Ga^u-\Gr^u)) \zc_u \ge 0, \quad \text{\small{for all $\zb \in \Rvec{\U p}$ and all $\zc_u \in \Rvec{\N_up}$ s.t. $(\mathbf{1}_{\N_u} \kron I_p)^T \zc_u = 0$}}  \label{eq:proof_Gsym}
  \end{equation}
  Conditions $\Ht \succeq 0$ and $\Ga^u-\Gr^u \succeq 0$ (for all $u=1,\dots,\U$) are sufficient to ensure that \eqref{eq:proof_Gsym} is satisfied and hence that $G^s \succeq 0$. To show the necessity of these conditions, let us consider different cases for $\zb$ and $\zc_u$:
\begin{itemize}
  \item When $\zc_u = 0$ for all $u$, then \eqref{eq:proof_Gsym} simply imposes that $\zb^T \Ht \zb \ge 0$ for all $\zb \in \Rvec{\U p}$, i.e. $\Ht \succeq 0$.
  \item When $\zb = 0$, $\zc_u = 0$, for all $u$, except for an arbitrary $u=v$, then \eqref{eq:proof_Gsym} imposes \vspace{-1mm}
  \[\sum_{i=1}^{\N_v} (\zc_v(i))^T(\Ga^v-\Gr^v)\zc_v(i) \ge 0, \quad \text{for all $\zc_v \in \Rvec{p}$ such that $(\mathbf{1}_{\N_v} \kron I_p)^T \zc_v = 0$,} \vspace{-1mm} \]
  where $\zc_v(i) \in \Rvec{p}$, for $i=1,\dots,\N_v$, denotes the $i^\mathrm{th}$ part of vector $\zc_v \in \Rvec{n_v p}$. Since $\N_v \ge 2$, we can choose $\zc_v(1) = -\zc_v(2) = s$, for an arbitrary $s \in \Rvec{p}$, and $\zc_v(j) = 0$ for $j\ge 3$, so that the above equation becomes $s^T (\Ga^v-\Gr^v) s \ge 0$ for all $s \in \Rvec{p}$, which is equivalent to $\Ga^v-\Gr^v \succeq 0$. This is valid for an arbitrary $v=1,\dots,\U$. 
\end{itemize}
Therefore, by setting $\Gr^u=0$ when $\N_u=1$, we have well proved the equivalence \eqref{eq:SDP_Gsym} for the SDP condition $\Gss \succeq 0$.
\end{proof}

\begin{proof}[Proof of Theorem \ref{thm:sets_perf}]
  As detailed in Section \ref{sec:consensusPEP}, the set of averaging matrices $\Wcl{\lm}{\lp}$ has Gram-representable necessary interpolation constraints, see Corollary \ref{cor:conscons}. Since $\Al,\P,\I$ and interpolation constraints for each $\F_u$ are also Gram-representable,  Proposition \ref{prop:GramPEP} guarantees that the computation of $w(\N,\Al,\K,\P,\I,\F_u, \Wcl{\lm}{\lp})$ can be formulated as an agent-dependent SDP PEP problem \eqref{eq:SDP_PEP}. Moreover, by Proposition \ref{prop:sym_sol_2}, we can restrict the PEP to symmetric agent-class solutions of the form of $\Fss$ \eqref{eq:Fsym} and $\Gss$ \eqref{eq:Gsym}. The symmetric solutions $\Fss$ and $\Gss$ are composed of blocks $\fa^u$, $\Ga^u$, $\Gr^u$, and $\Gc^{uv}$ ($u,v=1\dots,\U$). Therefore, all the PEP elements can be expressed in terms of these smaller blocks.
  Lemma \ref{lem:SDPGssets} shows how the SDP constraint $\Gss \succeq 0$, can be expressed in terms of $\Ga^u$, $\Gr^u$, and $\Gc^{uv}$. The other elements of the PEP can directly be expressed in terms of $\fa^u$, $\Ga^u$, $\Gr^u$, and $\Gc^{uv}$ ($u,v=1\dots,\U$), using the definition of $\Fss$ \eqref{eq:Fsym} and $\Gss$ \eqref{eq:Gs}, as detailed in Appendix \ref{ap:explicit}. 
  Since all the PEP elements can be expressed in terms of blocks $\fa^u \in \Rvec{q}$, $\Ga^u \in \Rmat{p}{p}$, $\Gr^u  \in \Rmat{p}{p}$ (when $\N_u \ge 2$) and $\Gc^{uv}  \in \Rmat{p}{p}$ ($u,v=1\dots,\U$), the problem can be made smaller by considering them as the variables of the SDP PEP,
  instead of the full matrix $\Gss \in \Rmat{\N p}{\N p}$ and vector $\Fss \in \Rvec{\N q}$. The size of the problem depends on the number $\U$ of equivalence classes of agents, and not directly on the total number of agents.
\end{proof}
Theorem \ref{thm:sets_perf} shows that the worst-case performance of a distributed optimization method can be computed with an SDP PEP whose size only depends on the number $\U$ of equivalence classes of agents, but not directly on the total number of agents $\N$. Therefore, if the number of equivalence classes $\U$ is independent of $\N$, so is the size of the problem. However, the problem can depend on $\N$ and $\N_u$ ($u=1,\dots,\U$) as parameters. Appendix \ref{ap:explicit} provides details to explicitly construct this compact PEP formulation.

\subsubsection*{Infinitely many agents}
If the number $\U$ of equivalence classes of agents is independent of $\N$, so is the size of the compact SDP PEP, and we can solve it efficiently for any value of $\N$. In particular, we can take the limit when $\N$ tend to $\infty$ in the problem, to compute the worst-case performance of an algorithm on an infinitely large network of agents 
\[ \lim_{\N\to\infty} w\qty(\N,\Al,\K,\P,\I,\F_u,\Wcl{\lm}{\lp}),\]
and to determine if this performance is bounded or not. 
In all the cases we treated (see Section \ref{sec:showEXTRA}), we observed that if the worst-case value for $\N\to\infty$ is bounded, then it depends on the size proportions of each equivalence class $\rho_1,\dots,\rho_\U$:
\[ \rho_u = \lim_{\N\to\infty}\frac{\N_u}{\N}.\]

\section{Case study: the EXTRA algorithm} \label{sec:showEXTRA}
In this section, we leverage our new PEP formulation, based on equivalence classes of agents, to evaluate the performance of the EXTRA algorithm \cite{EXTRA} in advanced settings, and the performance evolution with system size $\N$. This provides totally new insights into the worst-case performance of the algorithm.
The EXTRA method, described in Algorithm \ref{algo:EXTRA}, is a well-known distributed optimization method that was introduced in \cite{EXTRA} and further developed in  \cite{shi2015proximal,jakovetic2018unification,li2020revisiting}.
\begin{algorithm}[h]
  \caption{EXTRA}
  \begin{algorithmic}
  \STATE Choose step-size $\alpha > 0$, matrices $W, ~\Wt \in \Rmat{\N}{\N}$ and pick any $x_i^0 \in \Rvec{d}$ ($\forall i$);
  \STATE  $x_i^{1} = \sum_{j} w_{ij} x_j^0 - \alpha \nabla f_i(x_i^0)$, ~$\forall i$;
  \FOR{$k = 0, 1,\dots$}
    \STATE  $x_i^{k+2} = x_i^{k+1} + \sum_{j} \qty(w_{ij} x_j^{k+1} - \tilde{w}_{ij} x_j^k) - \alpha \qty(\nabla f_i(x_i^{k+1}) - \nabla f_i(x_i^k))$, ~$\forall i$;
 \vspace{1mm}
  \ENDFOR
  \end{algorithmic}
  \label{algo:EXTRA}
\end{algorithm}

\noindent Authors in  \cite{EXTRA} recommend to choose $\Wt = \frac{W+I}{2}$. This allows computing only one new consensus step at each iteration. In that case, the sharpest convergence results for EXTRA are given in \cite{li2020revisiting}, and summarized below for the strongly-convex case. 

\begin{proposition}[Theoretical performance guarantee for EXTRA \cite{li2020revisiting}] \label{prop:EXTRA} We consider the decentralized optimization problem \eqref{opt:dec_prob} with optimal solution $\x^* \in \Rvec{d}$. Under the following assumptions,
  \begin{enumerate}
    \item All local functions $f_i$ ($i \in \V$) are $L$-smooth and $\mu$-strongly convex, with $0 \le \mu \le L$;
    \item The averaging matrix $W \in \Wcl{-\lam}{\lam}$, with $\lam \in [0,1)$, and $\Wt = \frac{W+I}{2}$;
    \item The starting points satisfy
            \[\|x_i^0 - x^*\|^2 \le R_1 \qquad \text{and} \qquad \| \nabla f_i(x^*) \|^2 \le R_2 \qquad \text{for all $i=1,\dots,\N$;}\]
  \end{enumerate}
the EXTRA algorithm with $\alpha = \frac{1}{4L}$ guarantees that
\begin{align}
E_f(\K) \coloneq f(\xb^\K) - f(x^*) \le (1-\tau)^\K \frac{L R_1 + R_2/L}{1-\lam}, && \text{(functions error)} \label{eq:Ef_th} \\
E_{x}(\K) \coloneq \frac{1}{\N} \sum_{i=1}^\N \|x_i^\K - x^*\|^2 \le (1-\tau)^\K \frac{R_1 + R_2/L^2}{1-\lam}, && \text{(iterates error)} \label{eq:Ex_th} 
\end{align}
where $\xb^\K = \frac{1}{\N} \sum_{i=1}^\N x_i^\K$ and $\tau = \frac{1}{39\qty(\frac{L}{\mu} + \frac{1}{1-\lam})}$.
\end{proposition}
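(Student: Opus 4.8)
The plan is to reproduce the primal--dual Lyapunov argument of \cite{EXTRA, li2020revisiting}, of which this proposition is a restatement. First I would stack the agents' iterates into $\x^k \in \Rvec{\N d}$, telescope the second update of Algorithm \ref{algo:EXTRA}, and combine it with the first update to obtain the compact recursion
\[ \x^{k+1} = (W\otimes I_d)\,\x^k - \alpha\,\nabla\mathbf f(\x^k) - \sum_{t=0}^{k-1}\big((\Wt-W)\otimes I_d\big)\,\x^t, \]
where $\nabla\mathbf f(\x^k)$ stacks the local gradients $\nabla f_i(x_i^k)$. Since $\Wt = \frac{W+I}{2}$, the matrix $\Wt - W = \frac{I-W}{2}$ is positive semidefinite (the largest eigenvalue of $W \in \Wcl{-\lam}{\lam}$ being $\lam_1(W)=1$), so it has a symmetric PSD square root; setting $U := \big(\frac{I_\N-W}{2}\big)^{1/2}\otimes I_d$ and introducing the dual variable $\mathbf q^k := U\sum_{t=0}^{k}\x^t$ turns the recursion into the familiar primal--dual iteration
\[ \mathbf q^{k} = \mathbf q^{k-1} + U\x^{k}, \qquad \x^{k+1} = (\Wt\otimes I_d)\,\x^{k} - \alpha\,\nabla\mathbf f(\x^{k}) - U\mathbf q^{k}, \]
with $\mathbf q^{-1}=0$ and $\mathbf q^0 = U\x^0$.

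Next I would identify the fixed point. Taking $\x^\star = \mathbf 1_\N\otimes x^*$, so that $(\Wt\otimes I_d)\x^\star = \x^\star$ and $U\x^\star = 0$, the iteration forces $U\mathbf q^\star = -\alpha\,\nabla\mathbf f(\x^\star)$; such a $\mathbf q^\star$ exists in $\mathrm{range}(U) = \Cp$ precisely because $\nabla\mathbf f(\x^\star)$ is orthogonal to the consensus subspace $\C$, which is exactly the optimality condition $\frac1\N\sum_i \nabla f_i(x^*) = 0$ for \eqref{opt:dec_prob}. Since $\mathbf q^0 = U\x^0 \in \mathrm{range}(U)$, the dual error $\mathbf q^k - \mathbf q^\star$ stays in $\mathrm{range}(U)$ for all $k$, on which $U$ is invertible with nonzero singular values in $\big[\sqrt{\frac{1-\lam}{2}},\sqrt{\frac{1+\lam}{2}}\big]$ --- this is where the spectral gap of $W$ enters the estimates.

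The heart of the proof is then to build a Lyapunov function $V^k = \|\x^k - \x^\star\|_{\Gamma}^2 + \|\mathbf q^k - \mathbf q^\star\|^2$ for a suitably chosen positive semidefinite $\Gamma$ --- a low-degree polynomial in $W\otimes I_d$, close to $\Wt\otimes I_d$ and sandwiched between positive multiples of the identity up to the spectral gap of $W$ --- and to prove a one-step contraction $V^{k+1}\le(1-\tau)V^k$. This combines (i) $\mu$-strong convexity and $L$-smoothness of each $f_i$, invoked through the usual strong-monotonicity and cocoercivity inequalities to dominate the gradient terms; (ii) the spectral bounds $W\in\Wcl{-\lam}{\lam}$, treating the consensus direction (eigenvalue $1$, annihilated by $U$) separately from its orthogonal complement; and (iii) the specific step-size $\alpha=\frac{1}{4L}$, which is what allows the primal--dual cross terms to be absorbed. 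Carrying all the constants through these inequalities is what produces the contraction factor $\tau = \big(39(\frac{L}{\mu}+\frac1{1-\lam})\big)^{-1}$, and this is the step I expect to be the main obstacle: the sharp constants of \cite{li2020revisiting} come from delicate bookkeeping rather than any single idea, while everything around it is routine.

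Finally I would unroll $V^\K \le (1-\tau)^\K V^0$ and convert both ends. For the initial value, $\|\x^0 - \x^\star\|^2 = \sum_i \|x_i^0 - x^*\|^2 \le \N R_1$, while $\|\mathbf q^0 - \mathbf q^\star\|^2 \le 2\|U(\x^0-\x^\star)\|^2 + 2\|\mathbf q^\star\|^2$, and the minimum-norm solution of $U\mathbf q^\star = -\alpha\nabla\mathbf f(\x^\star)$ satisfies $\|\mathbf q^\star\|^2 \le \frac{2\alpha^2}{1-\lam}\|\nabla\mathbf f(\x^\star)\|^2 \le \frac{2\alpha^2\N}{1-\lam}R_2$ --- the source of the $\frac1{1-\lam}$ factors in \eqref{eq:Ef_th}--\eqref{eq:Ex_th}. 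At the other end, $E_x(\K) = \frac1\N\|\x^\K - \x^\star\|^2 \le \frac{1}{\N}V^\K$ up to the smallest eigenvalue of $\Gamma$, which yields \eqref{eq:Ex_th}, and \eqref{eq:Ef_th} follows from $\nabla f(x^*)=0$ and $L$-smoothness of $f = \frac1\N\sum_i f_i$ via $f(\xb^\K) - f(x^*) \le \frac{L}{2}\|\xb^\K - x^*\|^2 \le \frac{L}{2}\cdot\frac1\N\sum_i\|x_i^\K - x^*\|^2 = \frac{L}{2}E_x(\K)$, plus constant bookkeeping. As an independent check that needs none of this machinery, one could also verify \eqref{eq:Ef_th}--\eqref{eq:Ex_th} numerically for any given $\N$ --- and, since all agents are equivalent in this setting, it suffices to do so for $\N=2$ by Theorem \ref{thm:agent_indep_perf} --- using the corresponding compact SDP PEP.
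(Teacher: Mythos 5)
The paper's own ``proof'' of this proposition is a single sentence: the bounds \eqref{eq:Ef_th} and \eqref{eq:Ex_th} are imported verbatim from \cite[Corollary 2]{li2020revisiting}. There is therefore no argument in the paper to match yours against; what you have done instead is attempt to reconstruct the proof of the cited result. Your reconstruction is structurally faithful to the standard analysis and the parts you actually work out are correct: the telescoped recursion $\x^{k+1} = (W\otimes I_d)\x^k - \alpha\nabla\mathbf f(\x^k) - \sum_{t=0}^{k-1}((\Wt-W)\otimes I_d)\x^t$ checks out by induction, the primal--dual rewriting with $U=((I_\N-W)/2)^{1/2}\otimes I_d$ is consistent with the first step of Algorithm \ref{algo:EXTRA}, the existence of $\mathbf q^\star$ in $\mathrm{range}(U)$ is correctly tied to the optimality condition $\frac{1}{\N}\sum_i\nabla f_i(x^*)=0$, the singular-value interval for $U$ on $\Cp$ is right, and the conversions at both ends (the $\N R_1$ and $\frac{2\alpha^2\N R_2}{1-\lam}$ bounds on $V^0$, and $E_f\le\frac{L}{2}E_x$ via smoothness and Jensen) are sound and consistent with the claimed right-hand sides.

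The genuine gap is exactly where you flag it, and it is not a peripheral one: the entire quantitative content of the proposition is the explicit contraction factor $\tau=\bigl(39(\tfrac{L}{\mu}+\tfrac{1}{1-\lam})\bigr)^{-1}$ and the explicit numerators $LR_1+R_2/L$ and $R_1+R_2/L^2$, and your proposal never derives them. You specify neither the weight matrix $\Gamma$ in the Lyapunov function nor the chain of inequalities that absorbs the primal--dual cross terms at $\alpha=\frac{1}{4L}$, so what your outline establishes (modulo the deferred bookkeeping) is that a bound \emph{of this shape} holds for \emph{some} constants --- which is weaker than the statement. The paper avoids this by citing \cite{li2020revisiting}; having set out to prove it, you cannot. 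Your closing remark about a numerical sanity check via the compact PEP at $\N=2$ is a nice observation --- since the PEP value is an upper bound on the true worst case and is $\N$-independent here by Theorem \ref{thm:agent_indep_perf}, a PEP value below \eqref{eq:Ef_th}--\eqref{eq:Ex_th} would indeed confirm them for all $\N$ --- but a solver output at fixed parameter values is corroboration, not a proof of the proposition as stated for all $L,\mu,\lam,R_1,R_2,\K$.
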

\begin{proof} Bounds \eqref{eq:Ef_th} and \eqref{eq:Ex_th} directly follows from \cite[Corollary 2]{li2020revisiting}. 
\end{proof}
For comparison purposes, our PEP-based analysis of EXTRA will use the same assumptions as in Proposition \ref{prop:EXTRA}, with $L=1$, $\mu=0.1$, $\lam = 0.5$, $R_1 = 1$ and $R_2 = 1$, unless specified otherwise. 
Our PEP framework with equivalence classes of agents also enables the analysis of a variety of settings other than those of Proposition \ref{prop:EXTRA}, many of which have not yet been studied in the literature. In this section, we explore, in particular, the performance of the worst agent, the k-th percentile performance, and the performance under agent heterogeneity in the classes of local functions. Other possible settings include agent heterogeneity in the initial conditions, the network topology, the algorithm parameters or algorithm execution. For example, it could be used to analyze the performance of an algorithm when there is a subset of malicious agents.

\subsection{Performance of the worst agent} \label{sec:worst_agent}
\begin{figure}[b]
  \centering
  \includegraphics[width=0.6\textwidth]{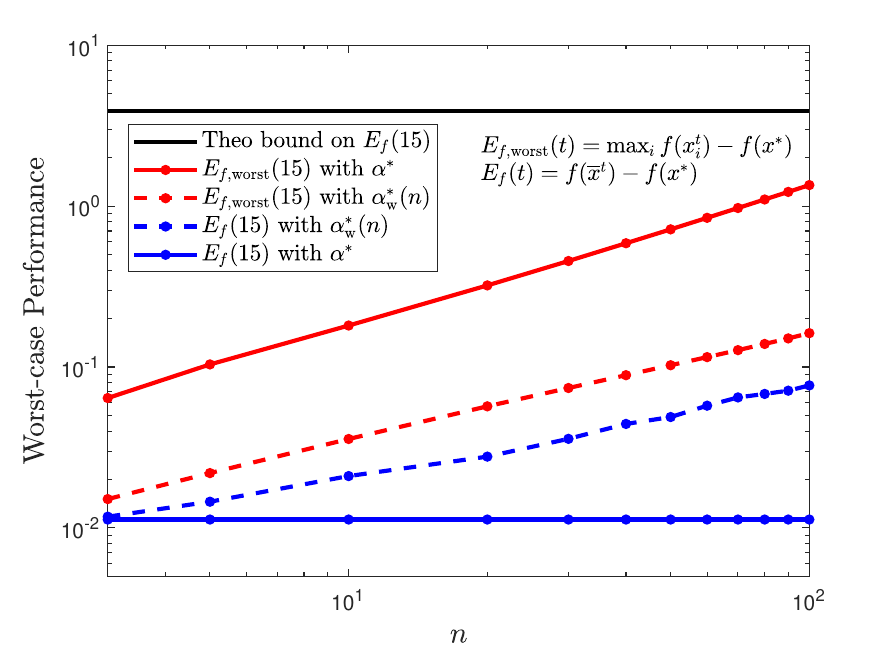}
  \caption{Comparison of the average functional error $E_f$ \eqref{eq:perf_avg} and the worst functional error $E_{f,\mathrm{worst}}$ \eqref{eq:perf_worst} for $\K=15$ iterations of the EXTRA algorithm and their evolution with the number of agents $\N$ in the system. The plot shows (i) the constant theoretical bound on $E_f$ \eqref{eq:Ef_th}, in black, (ii) the PEP bounds on the function error of the worst agent $E_{f,\mathrm{worst}}$, scaling sublinearly with $\N$, in red and
  (iii) the PEP bounds for the function error of the average iterate $E_{f}$, in blue. The constant step-size $\alpha^*$ has been optimized for this performance criterion and leads to a bound constant with $\N$ (plain blue line). Step-sizes can also be optimized with respect to $E_{f,\mathrm{worst}}(15)$, leading to smaller step-sizes $\alpha^*_{\mathrm{w}}(\N)$ decreasing as $\frac{1}{\sqrt{\N}}$. They improve the guarantee for $E_{f,\mathrm{worst}}$ and its scaling with $\N$ but they deteriorate the corresponding guarantee for $E_{f}$ (see dashed lines).
  In this plot, the range of eigenvalues for the averaging matrix is $[-0.5,0.5]$ and the local functions are 1-smooth and 0.1-strongly convex.
  }
  \label{fig:EXTRA_ww_Nevol}
\end{figure}
Classical analysis of distributed optimization algorithms often focuses on performance metrics averaged over all the agents, namely
\begin{equation} \label{eq:perf_avg}
  E_f(\K) = f(\xb^\K) - f(x^*) \qquad \text{ or } \qquad   E_x(\K) = \frac{1}{\N} \sum_{i=1}^\N \|x_i^\K - x^* \|^2,
  \end{equation}
where $\xb^\K = \frac{1}{\N} \sum_{i=1}^\N x_i^\K$. Theoretical guarantees on these criteria for EXTRA are given in \eqref{eq:Ef_th} and \eqref{eq:Ex_th} from Proposition \ref{prop:EXTRA}. As we have seen in Section \ref{sec:symPEP}, these error criteria lead to performance bounds that are independent of the number of agents $\N$ and are therefore easier to treat analytically. However, there exist other relevant performance measures that are more difficult to analyze and for which there are few or no results. This is the case of the performance of the worst agent in the network,
\begin{equation} \label{eq:perf_worst}
  E_{f,\mathrm{worst}}(\K) = \max_{i \in \V } f(x_i^\K) - f(x^*)\qquad \text{ or } \qquad  E_{x,\mathrm{worst}}(\K) = \max_{i \in \V } \|x_i^\K - x^*\|^2,
\end{equation}
where $\V = \{1,\dots,\N\}$ is the set of agents in the network.
To analyze these performance measures using PEP, we need to fix (arbitrarily) the agent that will be the worst one and use it in the PEP objective:
\[ f(x_1^\K) - f(x^*) \qquad \text{ or } \qquad \|x_1^\K - x^*\|^2, \]
The maximization of such objectives ensures that agent 1 will be the one with the largest error (on $f$ or $x$). Indeed, if another agent $j$ has a larger error, then the permutation of variables of agents $1$ and $j$ in the PEP solution would lead to another feasible PEP solution with a larger objective.
Agent 1 will thus receive a specific role in the PEP solution and is not equivalent to the $\N-1$ others. We can formulate a compact PEP using two equivalence classes of agents
\[ \V_1 = \{1\} \qquad \text{ and } \qquad \V_2 = \{2,\dots,\N\},\]
and applying the theory developed in Section \ref{sec:subsetsAgPEP}. \vspace{-1mm}

\paragraph*{The worst agent functions error}
Figure \ref{fig:EXTRA_ww_Nevol} shows the evolution of $E_f$ and $E_{f,\mathrm{worst}}$ with the size of the network, for 15 iterations of the EXTRA algorithm. These results have been validated with the agent-dependent PEP formulation (from Section \ref{sec:agentPEP}) for small values of $\N$, for verification purposes. However, as $\N$ becomes larger, this agent-dependent formulation becomes computationally intractable, reaching an SDP size of $325\times 325$ for $\N=10$ (and $\K=15$). This limitation precisely motivated the development of the compact symmetrized PEP formulations from Sections \ref{sec:symPEP} and \ref{sec:subsetsAgPEP}. 

Firstly, Figure \ref{fig:EXTRA_ww_Nevol} shows that the PEP bound for the average performance $E_f$ is well constant over $\N$, as predicted by Theorem \ref{thm:agent_indep_perf}, and outperforms the theoretical bound from \cite[Corollary 2]{li2020revisiting}, which requires 2750 iterations to guarantee the same error as the PEP bounds after 15 iterations. The theoretical bound is valid for a step-size $\alpha = \frac{1}{4L}$, while the PEP bound is shown for an optimized step-size $\alpha^* = \frac{0.78}{L}$, which minimizes the bound.
Since the bound is constant over $n$, the value of the optimized step-size can be used for any system size, however, it may depend on other settings, such as the range of eigenvalues for $W$: $[\lm, \lp]$. 

There is currently no theoretical guarantee for the worst agent functions error $E_{f,\mathrm{worst}}$ of EXTRA in the literature, but we can analyze it with our compact PEP formulation. Figure \ref{fig:EXTRA_ww_Nevol} shows that the PEP bound for the worst agent $E_{f,\mathrm{worst}}$ (in red) is scaling sublinearly with the number of agents $\N$. Using the constant step-size $\alpha^* = \frac{0.78}{L}$ (plain red line), we obtain a logarithmic slope of $0.92$. The step-size can also be tuned with respect to $E_{f,\mathrm{worst}}$, and would then have a different value for each value of $\N$, denoted $\alpha^*_\mathrm{w}(\N)$. Using $\alpha^*_\mathrm{w}(\N)$ (dashed red line), we obtain a logarithmic slope of $0.67$, which means the scaling with $\N$ is less important when using suitable step-sizes. In our case, the step-sizes $\alpha^*_\mathrm{w}(\N)$ are diminishing in $\frac{1}{\sqrt{\N}}$. However, these smaller step-sizes deteriorate the average performance $E_{f}$ (dashed blue line).

\paragraph*{The worst agent iterates error}
There is currently no theoretical guarantee for the worst agent iterates error $E_{x,\mathrm{worst}}$ of EXTRA in the literature, but one could derive\footnote{using the fact that $\max_{i\in\V} \|x_i^\K - x^*\|^2 \le \sum_{i=1}^\N \|x_i^\K - x^*\|^2$} a conservative bound on $E_{x,\mathrm{worst}}$ from \eqref{eq:Ex_th}, scaling linearly with the number of agents $\N$.
Using our compact PEP formulation, we can analyze such performance metric accurately and show that the performance of the worst agent $E_{x,\mathrm{worst}}$ for EXTRA actually scales sublinearly with $\N$, as shown in Figure \ref{fig:EXTRA_wb_Nevol}. 
Figure \ref{fig:EXTRA_wb_Nevol} shows, in particular, PEP-based bounds for $E_x$ and $E_{x,\mathrm{worst}}$. The observations are similar to those of Figure \ref{fig:EXTRA_ww_Nevol}. The PEP bound for the average performance $E_x$ is constant over $\N$, as predicted by Theorem \ref{thm:agent_indep_perf} and the corresponding optimized step-size is $\alpha^* = \frac{0.78}{L}$. Moreover, the PEP bound for the worst agent $E_{x,\mathrm{worst}}$ is scaling sublinearly with the number of agents $\N$.
Using the constant step-size $\alpha^* = \frac{0.78}{L}$ (plain red line), we obtain a logarithmic slope of $0.82$. As previously, the step-size can also be tuned with respect to $E_{x,\mathrm{worst}}$, and would then have a different value for each value of $\N$, denoted $\alpha^*_\mathrm{w}(\N)$. Using the optimal diminishing step-sizes $\alpha^*_\mathrm{w}(\N)$ (dashed red line), we obtain a logarithmic slope of $0.60$, which means the scaling with $\N$ is less important when using suitable step-sizes. These smaller step-sizes only slightly deteriorate the average performance $E_{x}$ (dashed blue line).

\subsection{The 80-th percentile of the agent performance}
\begin{figure}[b]
  \centering
  \includegraphics[width=0.6\textwidth]{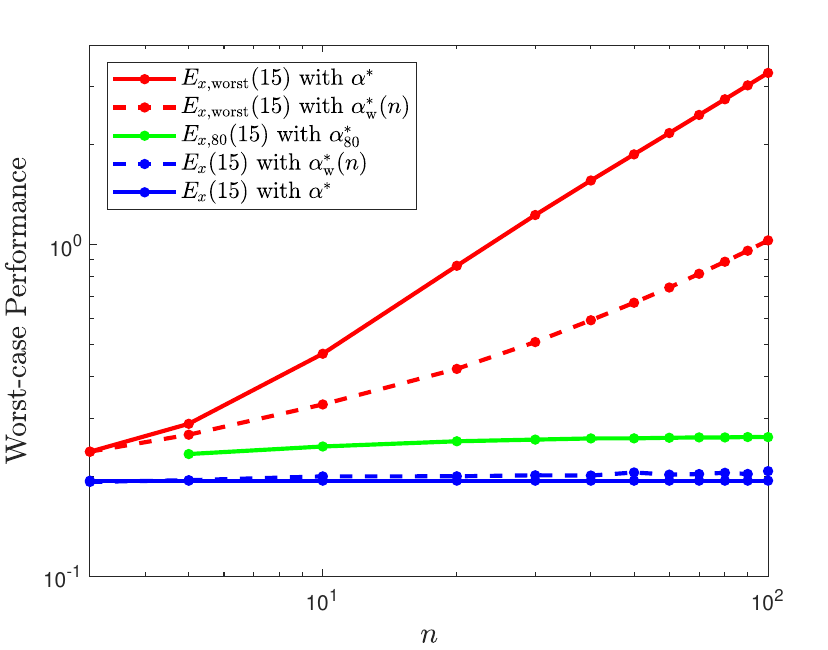}
  \caption{Comparison of the average iterates error $E_x$ \eqref{eq:perf_avg}, the worst iterates error $E_{x,\mathrm{worst}}$ \eqref{eq:perf_worst} and
  the 80-th percentile iterates error $E_{x,80}$ \eqref{eq:PEP_80perc} for $\K=15$ iterations of the EXTRA algorithm and their evolution with the number of agents $\N$ in the system. The plot shows (i) the PEP bounds on the iterates error of the worst agent $E_{x,\mathrm{worst}}$, scaling sublinearly with $\N$, in red,
  (ii) the PEP bound on the 80-th percentile iterates error $E_{x,80}$, in green, and (iii) the PEP bounds for the average iterates error $E_{x}$, in blue. The constant step-size $\alpha^*$ has been hand-tuned for this performance criterion and leads to a bound constant with $\N$ (plain blue line). Step-sizes can also be hand-tuned with respect to $E_{f\mathrm{worst}}(15)$, leading to smaller step-sizes $\alpha^*_{\mathrm{w}}(\N)$. They improve the guarantee for $E_{f,\mathrm{worst}}$ and its scaling with $\N$ without deteriorating too much the corresponding guarantee for $E_{f}$ (see dashed lines).
  In this plot, the range of eigenvalues for the averaging matrix is $[-0.5,0.5]$ and the local functions are 1-smooth and 0.1-strongly convex.}
  \label{fig:EXTRA_wb_Nevol}
\end{figure}

In some cases, the performance of the worst agent may not be the most appropriate performance measure. When the network is very large, the performance of the worst agent can be terribly bad, while the average performance is good.
Using our PEP approach, we can consider other performance measures that were so far completely out of reach of theoretical analysis. Inspired by statistical approaches, we propose here to analyze the k-th percentile in the distribution of the individual performance of each agent. In this experiment, we have chosen the 80-th percentile, that is, the performance of the worst agent, after excluding the 20\% worst agents. In other words, it measures the error at or below which 80\% of the agents fall in the worst-case scenario. The performance of the worst agent, analyzed in the previous subsection, can be seen as the 100-th percentile of the agent performance and can be analyzed via PEP using two equivalence classes of agents. When considering smaller percentiles, such as the 80-th, we can formulate a compact PEP using three equivalence classes of agents:
\[\V_1 = \{ 1, \dots, 0.2\N \}, \qquad \V_2 = \{0.2\N+1\}, \qquad \V_3 = \{0.2\N+2,\dots, \N\},  \]
where we assume $\N$ can be divided by 5. The class $\V_1$ contains the 20\% worst agents to exclude, the class $\V_2$ contains the agent for which the individual performance will provide the 80-th percentile, and $\V_3$ contains all the other agents. The PEP should therefore maximize the performance of agent $i\in \V_2$ and impose that the performance of each agent $j\in\V_j$ is larger:
\begin{align}\label{eq:PEP_80perc}
  E_{x,80}(\K) = \underset{\{\fa^u, \Ga^u, \Gr^u \Gc^{uv}\}_{u,v=1,2,3}}{\mathrm{maximum}} & ~\| x_i^\K - x^* \|^2  \quad \text{ with $i \in \V_2$}\\[1mm]
 \text{ s.t.} \qquad
   \| x_j^\K - x^* \|^2 &\ge  \| x_i^\K - x^* \|^2 \qquad \text{for all $j \in \V_1$,} \\
   ~~\qquad \text{All the other} & \text{ PEP constraints}.
\end{align}
In problem \eqref{eq:PEP_80perc}, the squared norms $~\| x_i^\K - x^* \|^2$ can be written as $(\id_{x^\K} - \id_{x^*})^T \Ga^u (\id_{x^\K} - \id_{x^*})$ with $u$ chosen according to the class $\V_u$ containing agent $i$. Details about the explicit construction of compact PEP formulations are given in Appendix \ref{ap:explicit}.

Figure \ref{fig:EXTRA_wb_Nevol} shows the evolution of $E_{x,80}$ with the size of the network, for 15 iterations of the EXTRA algorithm (green line). 
We observe that the scaling of $E_{x,80}$ with the number of agents $\N$ is very limited and quickly reaches a plateau, making this performance almost as good as the average performance $E_{x}$. Moreover, the optimal step-size for this bound, denoted $\alpha^*_{80}$, is independent of $\N$ and, in this case, it stays close to $\alpha^*$, optimized for $E_x$. 
The value of the plateau for $E_{x,80}$ has been confirmed by solving the compact PEP with $\N$ tending to $\infty$, as described at the end of Section \ref{sec:subsetsAgPEP}.

Figure \ref{fig:EXTRA_wb_Ninf} shows the evolution with k of the k-th percentile of the agent performance for EXTRA, when $\N \to \infty$. We see that the limit of the k-th percentile performance is finite for all values of $\mathrm{k} \in (0,100)$ and has a vertical asymptote in $\mathrm{k}=100$. The 100-th percentile computes the performance of the worst agent and grows well to infinity with $\N$ (see Figure \ref{fig:EXTRA_wb_Nevol}). 
According to Figure \ref{fig:EXTRA_wb_Ninf}, the value of the k-th percentile performance first increases slowly with k,
and then it starts to blow up after some threshold on k, depending on $\K$, to match the vertical asymptote in $\mathrm{k}=100$.
This means that it seems useful to exclude a small part of the worst agents in the performance metric, e.g. 10--20\%  in the settings we are considering, but not more.
Moreover, the k-th percentile of EXTRA for $\N \to \infty$ seems to converge geometrically. Indeed, in Figure \ref{fig:EXTRA_wb_Ninf}, when considering 5 more iterations in total, the k-th percentile improves by a factor between 2.2 and 3.2, depending on k. \smallskip

\begin{remark} Figures \ref{fig:EXTRA_wb_Nevol} and \ref{fig:EXTRA_wb_Ninf} show the performance of EXTRA for rather well-connected averaging matrices ($\lam=0.5$). Similar results and observations can be obtained for larger ranges of eigenvalues, i.e. $\lam \in (0.5,1)$, but this would require more iterations to get small errors.
\end{remark}

\begin{figure}[htb] 
  \centering
  \includegraphics[width=0.6\textwidth]{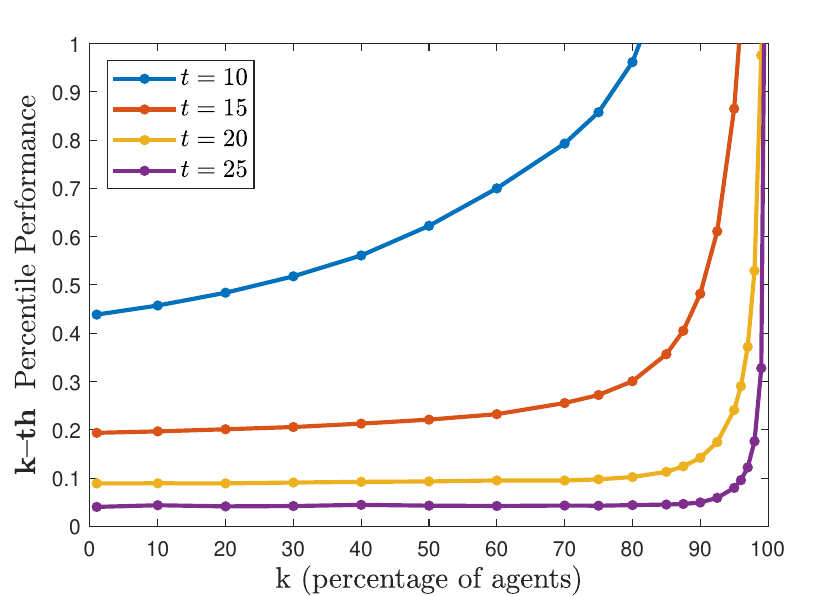}
  \caption{Evolution with k of the k-th percentile of the agent performance for $\K$ iterations of EXTRA, when the number of agents tends to infinity ($\N \to \infty$). Different total numbers of iterations $\K$ are compared. In this plot, the step-size is $\alpha = 0.78$, the range of eigenvalues for the averaging matrix is $[-0.5,0.5]$ and the local functions are 1-smooth and 0.1-strongly convex.
  }
  \label{fig:EXTRA_wb_Ninf}
\end{figure}

\subsection{Performance under local functions heterogeneity}
In many applications, the agents hold local functions sharing general properties, e.g. convexity or smoothness, but it is unlikely that all the local functions have exactly the same characterization for these properties. For example, the local functions can be $\mu_i$-strongly convex and $L_i$-smooth, but with different parameters $\mu_i$ and $L_i$. In general, the performance is then computed by considering that all the local functions have the worst parameters:
\[\mu = \min_i \mu_i \qquad \text{ and } \qquad L = \max_i L_i.\]
In this experiment, we analyze the gain in the performance guarantee when, instead of considering $f_i \in \F_{\mu,L}$ for all $i$, we consider two equivalence classes of agents $\V_1$ and $\V_2$ such that
\[f_i \in \F_{\mu_1,L} \quad \text{for all $i\in \V_1$} \quad \text{and} \quad f_i \in \F_{\mu_2,L} \quad \text{for all $i\in \V_2$}.\]
These two classes of agents manipulate functions with two different condition numbers $\kappa_1 = \frac{L}{\mu_1}$ and $\kappa_2 = \frac{L}{\mu_2}$.
\begin{figure}[tb] 
  \centering
  \includegraphics[width=0.8\textwidth]{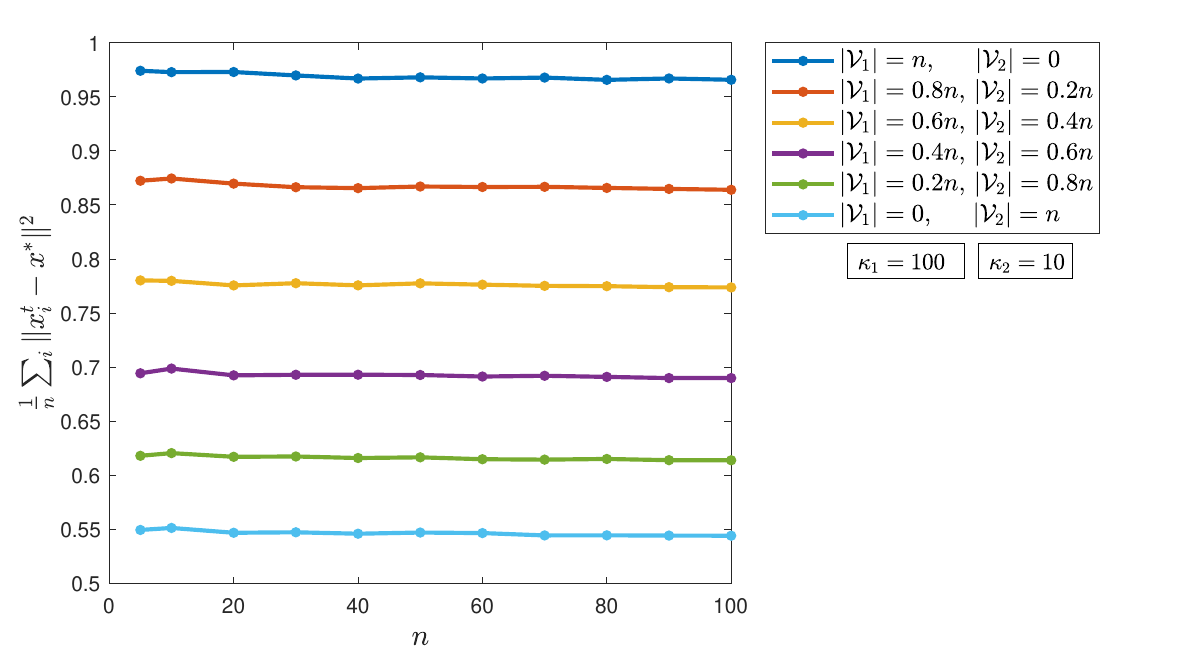}
  \caption{Evolution with the number of agents $\N$ of the worst-case average distance to optimum $E_{x}$ for 15 iterations of EXTRA with two equivalence classes of agents $\V_1$ and $\V_2$ using each a different condition number for their function class. The agents in $\V_1$ hold 1-smooth and 0.01-strongly convex local functions ($\kappa_1 = 100$) and the agents in $\V_2$ hold 1-smooth and 0.1-strongly convex local functions ($\kappa_2 = 10$). The guarantees are independent of the total number of agents $\N$ and only depend on the proportion of agents in each class. In this experiment, the range of eigenvalues for the averaging matrix is $[-0.5,0.5]$.}
  \label{fig:EXTRA_fctParams_Nevol}
\end{figure}
Figure \ref{fig:EXTRA_fctParams_Nevol} shows the situation where $L=1$, $\mu_1 = 0.01$ and $\mu_2=0.1$, for different sizes of classes $\V_1$ and $\V_2$. We observe that the guarantees for EXTRA only depend on the relative composition of each class $\V_1$ and $\V_2$ but not on the total number of agents $\N$. This observation has been confirmed by solving the compact PEPs for $\N$ tending to $\infty$, as described at the end of Section \ref{sec:subsetsAgPEP}.
In Figure \ref{fig:EXTRA_fctParams_Nevol}, when $20\%$ of the agents pass from the class $\V_1$ to the class $\V_2$, the worst-case guarantee is improved by a constant factor $0.9$. This improving factor can be linked to the worst-case guarantees obtained with a uniform value for $\kappa$ in the local functions ($\kappa=100$ or $\kappa=10$ for all the agents) and motivates the following conjecture:
\begin{conjecture}
Let $E_{x}^{\kappa_1,\kappa_2}(\K,\theta)$ be the performance guarantee after $\K$ iterations of EXTRA with $\theta \N$ agents with local functions in  $\F_{\mu_1,L}$ and $(1-\theta)\N$ agents with local functions in  $\F_{\mu_2,L}$, for $\theta \in [0,1]$.
Let $E_{x}^{\kappa_1}(\K)$ and $E_{x}^{\kappa_2}(\K)$ be the worst-case errors of $\K$ iterations of the EXTRA algorithm with uniform conditioning for all the local functions, respectively with $\kappa_1=\frac{L}{\mu_1}$ and $\kappa_2=\frac{L}{\mu_2}$. By definition, we have
\[E_{x}^{\kappa_1}(\K)=E_{x}^{\kappa_1,\kappa_2}(\K,1) \quad \text{ and } E_{x}^{\kappa_2}(\K)=E_{x}^{\kappa_1,\kappa_2}(\K,0)\]
We conjecture that the performance $E_{x}^{\kappa_1,\kappa_2}(\K,\theta)$ can be expressed in function of $E_{x}^{\kappa_1}(\K)$ and $E_{x}^{\kappa_2}(\K)$ as
  \[ E_{x}^{\kappa_1,\kappa_2}(\K,\theta) = \bigl(E_{x}^{\kappa_1}(\K)\bigr)^{\theta} \bigl(E_{x}^{\kappa_2}(\K)\bigr)^{1-\theta} \qquad \text{for all $\K>0$, $\kappa_1,\kappa_2>0$ and $\theta\in[0,1]$ }. \]
\end{conjecture}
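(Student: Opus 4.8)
The plan is to recast the conjecture as the statement that $\theta \mapsto \log E_{x}^{\kappa_1,\kappa_2}(\K,\theta)$ is affine on $[0,1]$: since its values at $\theta=1$ and $\theta=0$ are $\log E_{x}^{\kappa_1}(\K)$ and $\log E_{x}^{\kappa_2}(\K)$, affineness is exactly the claimed identity. By Theorem~\ref{thm:sets_perf}, for every rational $\theta$ whose denominator divides $\N$ the quantity $E_{x}^{\kappa_1,\kappa_2}(\K,\theta)$ is the optimal value of a compact SDP PEP with two equivalence classes of sizes $\N_1=\theta\N$, $\N_2=(1-\theta)\N$ and variables $\fa^u,\Ga^u,\Gr^u,\Gc^{uv}$; passing to the limit $\N\to\infty$ (as at the end of Section~\ref{sec:subsetsAgPEP}) yields an SDP depending on $\theta$ alone, and by continuity it suffices to establish log-affineness of its optimal value for all $\theta\in[0,1]$. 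I would split this into the two one-sided bounds
\[
E_{x}^{\kappa_1,\kappa_2}(\K,\theta)\ \le\ \bigl(E_{x}^{\kappa_1}(\K)\bigr)^{\theta}\bigl(E_{x}^{\kappa_2}(\K)\bigr)^{1-\theta}
\qquad\text{and}\qquad
E_{x}^{\kappa_1,\kappa_2}(\K,\theta)\ \ge\ \bigl(E_{x}^{\kappa_1}(\K)\bigr)^{\theta}\bigl(E_{x}^{\kappa_2}(\K)\bigr)^{1-\theta},
\]
the first being a log-convexity statement for the optimal value and the second a log-concavity (equivalently, a construction) statement.

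For the lower bound I would try to assemble a feasible point of the mixed PEP from the two pure worst-case instances: worst-case trajectories and interpolable triples $\{x_i^k,g_i^k,f_i^k\}$ for the all-$\kappa_1$ system and $\{\hat x_i^k,\hat g_i^k,\hat f_i^k\}$ for the all-$\kappa_2$ system, recombined so that the $\V_1$-agents remain $\F_{\mu_1,L}$-interpolable, the $\V_2$-agents remain $\F_{\mu_2,L}$-interpolable, the shared $x^*$ and the optimality condition for \eqref{opt:dec_prob} hold, the consensus interpolation constraints of Theorem~\ref{thm:sufficiency} hold, and the objective $\frac{1}{\N}\sum_i\|x_i^\K-x^*\|^2$ equals the geometric mean. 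A heuristic that makes the geometric mean plausible is that EXTRA is, up to the gradient terms, a fixed linear recurrence: if the pure worst cases contract geometrically at rates $r(\kappa_1),r(\kappa_2)$, then a network that is a $\theta$-fraction ``type $1$'' and a $(1-\theta)$-fraction ``type $2$'' should contract at the geometric mean $r(\kappa_1)^\theta r(\kappa_2)^{1-\theta}$ of the per-step factors; the construction should therefore interleave or superpose the two instances so that the error budget is split multiplicatively rather than additively. For the upper bound I would instead aim to show the optimal value of the limiting SDP is log-convex in $\theta$, e.g. via an envelope-theorem computation of $\frac{d^2}{d\theta^2}\log w$ from the dual multipliers, or by interpolating optimal solutions at two values $\theta_0,\theta_1$ through operator-geometric-mean combinations of the Gram blocks $\Ga^u$ (with matching combinations of $\fa^u$) chosen to remain feasible, Lieb-type operator concavity being the natural tool.

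The main obstacle — and the reason the statement is only conjectured — is that the worst case is the \emph{geometric}, not the arithmetic, mean: giving the two classes more independent freedom should a priori only increase the worst-case error, so the geometric-mean value reflects a genuine rigidity imposed by the consensus steps and the shared optimality condition, which prevent both subsystems from being simultaneously worst. Quantifying that rigidity exactly — rather than treating the PEP as a black box — is the crux, and it is plausible that one must exploit the specific structure of EXTRA (its linear recurrence and the choice $\Wt=\frac{W+I}{2}$) rather than properties of $\Ad$ in general. As a partial result one could instead establish the weaker two-sided bound $\min\{E_{x}^{\kappa_1}(\K),E_{x}^{\kappa_2}(\K)\}\le E_{x}^{\kappa_1,\kappa_2}(\K,\theta)\le \theta E_{x}^{\kappa_1}(\K)+(1-\theta)E_{x}^{\kappa_2}(\K)$ and verify the exact identity numerically over a grid of $(\K,\kappa_1,\kappa_2,\theta)$.
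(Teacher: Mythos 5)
The statement you are trying to prove is presented in the paper only as a \emph{conjecture}: the authors offer no proof, only numerical verification via the compact PEP of Section~\ref{sec:subsetsAgPEP} for several values of $\kappa_1$, $\kappa_2$ and $\theta$ (and an extension to three classes). So you are not missing an argument that the paper contains; rather, your text is a research plan for an open problem, and as a proof it has a genuine gap: neither of the two one-sided bounds you reduce the claim to is actually established. For the upper bound, log-convexity of the SDP optimal value in $\theta$ does not follow from any standard envelope or perturbation argument here, because $\theta$ enters the compact formulation through the class sizes $\N_u$ in a decidedly nonlinear way — see the blocks $\Ht$ in \eqref{eq:def_H} and the matrices $\Gtt$, $\Gdt$ in \eqref{eq:def_GTt}, \eqref{eq:def_GDt}, which contain products $\N_u\N_v$ and $\N_u(\N_u-1)$ — so the feasible set itself deforms nonconvexly with $\theta$ and the optimal value of a maximization over it need not be log-convex. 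Your alternative route via operator geometric means of the Gram blocks would additionally have to preserve feasibility of the interpolation constraints for two \emph{different} function classes simultaneously, which is not addressed. For the lower bound, the proposed superposition of the two pure worst-case instances must jointly satisfy the shared consensus interpolation constraints \eqref{eq:subset_pres}--\eqref{eq:sym_cons} (with a single matrix $M$ for all agents) and the coupled optimality condition $\frac{1}{\N}\sum_i \nabla f_i(x^*)=0$; these couplings are precisely what prevent both subpopulations from being simultaneously worst, and you give no mechanism by which the recombined instance attains the geometric rather than some other mean. The heuristic about per-step contraction rates is suggestive but does not survive the fact that the PEP worst case is over $\K$ coupled iterations with a single averaging matrix, not a product of independent one-step contractions.

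Two smaller points. First, your reduction to log-affineness of $\theta\mapsto\log E_x^{\kappa_1,\kappa_2}(\K,\theta)$ given the endpoint values is correct, and the passage to $\N\to\infty$ to get a function of $\theta$ alone is consistent with the paper's (also unproven, numerically observed) claim that the mixed-class guarantee depends only on the proportions $\rho_u$. Second, even your proposed ``weaker'' partial results are not free: the lower bound $\min\{E_x^{\kappa_1},E_x^{\kappa_2}\}\le E_x^{\kappa_1,\kappa_2}(\K,\theta)$ does follow from the nesting $\F_{\mu_2,L}\subset\F_{\mu_1,L}$ when $\mu_1\le\mu_2$ (the all-well-conditioned instance is feasible for the mixed problem), but the arithmetic-mean upper bound is itself unsubstantiated for the same reasons as the log-convexity claim. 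In short, your proposal correctly diagnoses where the difficulty lies and matches the paper's own assessment that the statement is open, but it does not constitute a proof, and the two bounds it hinges on would each require substantial new ideas exploiting the structure of EXTRA and of the consensus constraints.
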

In addition to the values of $\kappa_1$, $\kappa_2$ and $\theta$ shown in Figure \ref{fig:EXTRA_fctParams_Nevol}, this conjecture has been verified for different other values of $\kappa_1$, $\kappa_2$ and $\theta$. It can also be extended to any number of agent equivalence classes, associated with different values of $\kappa$. For example, we have verified it with 3 classes of agents. 
This results in much tighter bounds in cases where only a few agents in the system hold ill-conditioned local functions. As future direction, this also allows exploring situations where the agents hold totally different types of local functions, e.g. strongly-convex ($\mu > 0$) and weakly-convex ($\mu < 0$) functions, or smooth and non-smooth functions. 
This conjecture would have been difficult to guess without the PEP tool and the compact formulation developed in this paper, which shows its usefulness.

\subsection{On the numerical resolution of the compact SDP PEP formulation}
In this case study, we used Matlab to model and solve the SDP PEP problems. In particular, we used the YALMIP toolbox for modeling and the Mosek solver for solving. Our code is available on GitHub (\url{https://github.com/sebcolla/Performance-Estimation-Problems-for-distributed-optimization}). The Mosek solver, like most of the other existing ones, relies on the interior-point method and the existence of a Slater point in the feasible set of the problem. Some equality constraints in our problems may prevent the existence of a Slater point, hence leading the solver to find solutions of low numerical quality.
Indeed, the compact PEP formulation, described in Sections \ref{sec:symPEP} and \ref{sec:subsetsAgPEP}, and detailed in Appendix \ref{ap:explicit}, contains equality constraints that impose rank deflection of the matrix variables of the resulting SDP, see for example, the optimality constraints, or the average preserving condition for consensus steps (Propositions \ref{prop:opti_cons} and \ref{prop:consensus_interp_Gs} in Appendix \ref{ap:explicit}). This may be problematic for the numerical conditioning of the problem, and thus the numerical quality of its solution. 
We have verified the accuracy of the worst-case value given by the compact PEP formulation for $\N \le 5$, and always observed a relative error of less than $1\%$. However, the worst-case solutions (i.e. functions and iterates) can be very high-dimensional.

In the agent-dependent PEP formulations, all the problematic equality constraints can easily be removed, by exploiting them to reduce the size of the SDP. This allows the problem and its solution to be well-conditioned. However, such dimensionality reductions are more involved in our new compact PEP SDP formulations. We hope that existing toolboxes and solvers will be extended to automatically reduce the SDP size by exploiting equality constraints.
Therefore, for now, if one wishes to evaluate the performance of a distributed optimization algorithm in settings for which we know the result will be independent of $\N$ by Theorem \ref{thm:agent_indep_perf}, we recommend solving the agent-dependent formulation with $\N=2$ because the current solvers will perform better on this version of the problem. Moreover, formulating such an agent-dependent PEP is more intuitive and is made user-friendly by the toolboxes helping in building and solving PEP, i.e. PESTO in Matlab \cite{PESTO} and PEPit in Python \cite{pepit2022}. By Corollary \ref{cor:N2}, the obtained results for $\N=2$ agents will be valid for any $\N \ge 2$, including for $\N \to \infty$. 

\section{Conclusion}
In this paper, we have harnessed agent symmetries to develop compact SDP PEP formulations for computing the worst-case performance of decentralized optimization algorithms. 
When all the agents are equivalent, we have characterized the situations where the performance is totally independent of the number of agents, which allows analyzing and computing the performance in the fundamental case with only two agents. Such situations include many common settings for the performance evaluation of distributed optimization algorithms. We have also shown that in the worst-case scenario, when all agents are equivalent, their local sequences of iterates are rotations of each other and their local functions are identical up to a change of variables.
Moreover, this new compact PEP formulation also allows analyzing advanced performance settings with several equivalence classes of agents, enabling tighter analysis and deeper understanding of the algorithm performance, as we demonstrated with EXTRA. 
We believe that the different contributions of this paper can significantly help the research in the field of distributed optimization by simplifying the analysis and understanding of the performance of distributed algorithms. 

\bibliographystyle{plain+eid}
 
  
\bibliography{refs}

\newpage
\appendices 
\section{Explicit expressions of the compact PEP components} \label{ap:explicit}
\subsection{When all agents are equivalent} \label{ap:explicit_equiv}
Theorem \ref{thm:agent_indep_perf} characterizes the settings in which we can formulate the performance computation of a distributed optimization algorithm as an SDP PEP which is totally independent of the number of agents $\N$. In particular, we need all the agents to be equivalent (Assumption \ref{ass:equiva}), so we can work with a fully symmetric solution $(\f^s, G^s)$ (Corollary \ref{cor:fullsymsol}), composed of small repeating blocks $\fa$, $\Ga$, and $\Gr$.
The resulting agent-dependent PEP, expressed with blocks $\fa$, $\Ga$, and $\Gr$ has a size independent of the number of agents $\N$, but the value of $\N$ can still appear in the PEP and impact its solution, as any other parameter of the problem. Using appropriate changes of variables, we can obtain a PEP formulation that is fully independent of $\N$, under the condition stated in part 2 of Theorem \ref{thm:agent_indep_perf}. In this appendix, we show how we can explicitly build this agent-independent PEP formulation for many usual settings. For that purpose, we pass through the main components of a PEP, that we have introduced in Section \ref{sec:agentPEP}, and detail how they can be written with the blocks $\fa$, $\Ga$, and $\Gr$ and which changes of variable get rid of $\N$.

Firstly, the SDP condition $G^s \succeq 0$ can be expressed with $\Ga$, and $\Gr$ using Lemma \ref{lem:SDPGs}. Then, Lemma \ref{lem:SDPGs2} makes it independent of $\N$, by introducing a new block $\Gt = \frac{1}{\N}\qty(\Ga+(\N-1)\Gr)$. Similarly to this result, the agent-independent formulation of other PEP components relies on the blocks $\Ga$ \eqref{eq:Gs} and $\Gt$ \eqref{eq:GT} of the Gram matrices. We link these two matrices by defining \emph{the difference block} $\Gd$: 
\begin{equation} \label{eq:Gadt}
  \Gd = \Ga-\Gt.
\end{equation}
We summarize below how these three blocks, related to the symmetrized Gram $G^s$ by definition, can be expressed based on the blocks of any Gram matrix solution $G$ \eqref{eq:initialGram}:
\begin{equation} \label{eq:def_GT_GD}
  \Ga = \frac{1}{\N} \sum_{i=1}^\N G_{ii}, \qquad
  \Gt = \frac{1}{\N^2}\sum_{i=1}^\N\sum_{j=1}^\N G_{ij} \quad \text {and } \quad
  \Gd = \frac{1}{\N^2}\sum_{i=1}^\N\sum_{j=1}^\N (G_{ii} - G_{ij}).
  \end{equation}
Using these three blocks, we can easily write many expressions in PEP, independently of $\N$. However, only two of them should be variables of the resulting compact SDP PEP, as the third one could always be obtained using \eqref{eq:Gadt}. In this appendix, we call  agent-independently Gram-representable the expressions that can be written $\fa$, $\Ga$, $\Gt$, and $\Gd$, without any dependence on the number of agents $\N$.

\begin{definition}[Agent-independently Gram-representable] \label{def:agent-Gram-rep}
  Let $\fa$ be the block of the symmetrized function values vector, as defined in \eqref{eq:Fs}, $\Ga$, $\Gt$, and $\Gd$ the symmetric blocks defined in \eqref{eq:def_GT_GD}.
  We say that an expression, such as a constraint or an objective, is agent-independently Gram-representable if it can be expressed using a finite set of linear or LMI constraints or expressions involving only $\fa$, $\Ga$, $\Gt$, and $\Gd$, without any dependence on the number of agents $\N$.
\end{definition}

The formulation of single-agent constraints (Definition \ref{def:single_agent}) into agent-independently Gram-representable constraints will rely on relations \eqref{eq:usefa_single} and \eqref{eq:useGa_single}, involving $\fa$ and $\Ga$.
 The formulation of scale-invariant expressions (Definition \ref{def:scale-invariant}) into agent-independently Gram-representable expressions will rely on relations \eqref{eq:usefa}, \eqref{eq:useGa}, \eqref{eq:useGt}, involving respectively $\fa$, $\Ga$, and $\Gt$. We also define a useful relation involving $\Gd$, which encapsulates a combination of relations \eqref{eq:useGa} and \eqref{eq:useGt}.
This new relation allows expressing the average over the $\N$ agents of the scalar products of two centered variables related to the same agents: 
    \begin{equation} \label{eq:useGd}
      \frac{1}{\N} \sum_{i=1}^\N \qty(x_i - \xb)^T \qty(y_i - \yb) = \frac{1}{\N^2} \sum_{i=1}^\N \sum_{j=1}^\N (x_i^Ty_i  - x_i^Ty_j) = \frac{1}{\N^2} \sum_{i=1}^\N \sum_{j=1}^\N \id_x^T (G_{ii} - G_{ij}) \id_y = \id_x^T (\Gd) \id_y,
    \end{equation}
where $x_i$ and $y_j$ are any variables of agent $i$ and $j$ and $\xb = \frac{1}{\N} \sum_{j=1}^\N x_i$ and $\yb = \frac{1}{\N} \sum_{j=1}^\N y_i$ are their agent averages. The last equality in \eqref{eq:useGd} follows from the definition of $\Gd$ \eqref{eq:def_GT_GD}. Moreover, $\id_x, \id_y \in \Rvec{p}$ denote coefficient vectors for variables $x$ and $y$. As a reminder, a coefficient vector $\id_x$ contains linear coefficients selecting the correct combination of columns in $P_i \in \Rmat{d}{p}$ \eqref{eq:defPi} to obtain vector $x_i \in \Rvec{d}$, i.e. $P_i \id_x = x_i$, for any $i = 1,\dots,\N$. This notation allows, for example, to write $x_i^Tx_i$ as $x_i^Tx_i = \id_x^T P_i^TP_i \id_x = \id_x^T G_{ii} \id_x$.

In what follows, we will exploit all these relations, to write all the components of a PEP, independently of $\N$.

\noindent The following proposition treats the interpolation constraints of a PEP.
\begin{proposition}[Function interpolation constraints] \label{prop:fct_interp}
  Let $\F$ be a set of functions for which there exist linearly Gram-representable interpolation constraints. In a PEP for a distributed optimization method, with fully symmetrized solutions $(\f^s,G^s)$ \eqref{eq:Fs}-\eqref{eq:Gs}, the constraints $f_i \in \F$, for all $i \in \V$ can be expressed using a set of interpolation constraints that are agent-independently Gram-representable and that only involves $\fa$ and $\Ga$.  
\end{proposition}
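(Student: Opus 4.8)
The plan is to observe that Gram-representable interpolation constraints for a function class $\F$ are \emph{single-agent} constraints in the sense of Definition~\ref{def:single_agent}, and then to push them through the substitutions already available for fully symmetric PEP solutions. Indeed, interpolation conditions such as those recalled for $\F_{\mu,L}$ in Section~\ref{sec:agentPEP} are, for each agent $i$, linear (or LMI) combinations of the function values $f_i^k$ and of scalar products of the vector variables of agent $i$ --- its iterates $x_i^k$, its gradient vectors $g_i^k$, and the common point $x^*$ copied into its block --- with coefficients that do not depend on $\N$. Since such a constraint involves only the local function and local variables of the single agent $i$, it is a single-agent constraint, and imposing $f_i\in\F$ for all $i\in\V$ amounts to imposing the same family of single-agent constraints for every agent.

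Concretely, the steps I would carry out are the following. First, since all agents are equivalent, Corollary~\ref{cor:fullsymsol} lets me restrict the PEP to a fully symmetric solution $(F^s,G^s)$ of the form \eqref{eq:Fs}--\eqref{eq:Gs}, so that $f_i=\fa$ and $G_{ii}=\Ga$ for every $i\in\V$. Then I would rewrite each term appearing in the interpolation constraint of agent $i$ using relations already established in the proof of Theorem~\ref{thm:agent_indep_perf}: a function-value term becomes $f_i(x_i)=\id_{f(x)}^T f_i=\id_{f(x)}^T\fa$ by \eqref{eq:usefa_single}, and a scalar-product term becomes $x_i^T y_i=\id_x^T G_{ii}\id_y=\id_x^T\Ga\id_y$ by \eqref{eq:useGa_single}. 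After these substitutions, the interpolation constraint of agent $i$ is a linear (or LMI) constraint in $\fa$ and $\Ga$ alone, with coefficients independent of $\N$; in particular the index $i$ has disappeared, so the $\N$ constraints $f_i\in\F$ collapse into a single set of interpolation constraints on $\fa$ and $\Ga$, which is agent-independently Gram-representable in the sense of Definition~\ref{def:agent-Gram-rep}.

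There is no genuine obstacle here; the proposition is essentially a detailed instance of the single-agent reasoning already used in the proof of Theorem~\ref{thm:agent_indep_perf}. The only point to be careful about is bookkeeping: one must check that \emph{every} quantity entering an interpolation constraint --- function values, iterates, gradient vectors, and the shared optimal point $x^*$ --- really lives inside the block pair $(\fa,\Ga)$ of a fully symmetric solution, so that nothing leaks into the off-diagonal block $\Gr$ or reintroduces a dependence on $\N$ (this relies on $x^*$, like any common variable, being copied into each agent block $P_i$, so that its scalar products with agent $i$'s variables are encoded in $G_{ii}=\Ga$). Finally, if $\F$ only admits LMI Gram-representable interpolation constraints, the same substitutions apply verbatim and yield an LMI constraint in $\fa$ and $\Ga$.
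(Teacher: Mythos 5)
Your proposal is correct and follows essentially the same route as the paper's own proof: both identify the interpolation constraints as single-agent constraints, restrict to the fully symmetric solution via Corollary \ref{cor:fullsymsol}, and substitute using \eqref{eq:usefa_single} and \eqref{eq:useGa_single} so that the agent index disappears and only $\fa$ and $\Ga$ remain. Your additional remark about $x^*$ being copied into each block $P_i$ is a sound bookkeeping point that the paper handles separately in its discussion of common points.
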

\begin{proof}
  Each constraint $f_i \in \F$ can be written with a set of Gram-representable interpolation constraints involving function values of agent $i$ and scalar products between quantities (i.e. points and gradients) related to agent $i$. These are thus single-agent constraints that are all identical when we restrict to agent-symmetric PEP solution $(\f^s,G^s)$ \eqref{eq:Fs}-\eqref{eq:Gs}. Therefore, they can all be expressed with $\fa$ and $\Ga$ using \eqref{eq:usefa_single} and \eqref{eq:useGa_single}.
\end{proof}
\begin{remark}
  For example, when $\F$ is the set of convex functions, we have, 
  \begin{equation} \label{eq:ex_convex}
    f_i \in \F \quad \Leftrightarrow \quad f_i^k \ge f_i^l + (g_i^l)^T(x_i^k - x_i^l) \quad \forall k,l \quad \Leftrightarrow \quad \id_{f^k}^T \fa \ge \id_{f^l}^T \fa + \id_{g^l}^T (\Ga)(\id_{x^k} - \id_{x^l}),
  \end{equation}
  where $f_i^k = f_i(x_i^k)$, $g_i^l = \nabla f_i(x_i^l)$ and $\id_{f^k}$, $\id_{f^l}$, $\id_{g^l}$, $\id_{x^k}$ and $\id_{x^l}$ are appropriate vectors of coefficients.
  This is obtained by applying relation \eqref{eq:usefa_single} for $f_i(x_i^k)$ and $f_i(x_i^l)$ and relation \eqref{eq:useGa_single} for the scalar product between $g_i^l$ and $(x_i^k - x_i^l)$.
\end{remark}

\paragraph*{Algorithm description}
The class of methods $\Ad$ that we consider is defined in Definition \ref{def:Ad} and may combine gradient evaluation, consensus steps, and linear combinations of variables.
In the latter, each agent defines the same linear combination of its local variables. It should be the same combination for all the agents, otherwise, they are not equivalent, and Assumption \ref{ass:equiva} does not hold. All the variables in the combinations are related to the same agent and can therefore be squared and formulated in PEP using $\Ga$, as in \eqref{eq:useGa}. Alternatively, we can reduce the number of variables in the blocks of the Gram matrix by using the linear combination to define a vector of coefficients in terms of the others, as shown in Proposition \ref{prop:combi}:

\begin{proposition}[Linear combinations] \label{prop:combi} Let $P_i = [y_i^1,\dots,y_i^p]$ be the $p$ variables of agent $i$. We consider any linear combination of the variables, coordinated over all the agents $i\in\V$
\begin{equation} \label{eq:combi}
   z_i = \sum_{k=1}^p \beta_k y_i^k \qquad \text{for all $i$},
\end{equation}
  where $\beta_k$ are given coefficients and $y_i^k \in \Rvec{d}$ are any local variables from agent $i$. Such a combination step can be formulated in the PEP by defining the vector of coefficient $\id_z$ based on the vectors $\id_{y^1},\dots,\id_{y^p}$.
  \[ \id_z = \sum_{k=1}^p \beta_k \id_{y^k} \]
  This avoids adding $z_i$ as a new column of $P_i$.
\end{proposition}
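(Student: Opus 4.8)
The plan is to verify that the proposed vector $\id_z = \sum_{k=1}^p \beta_k \id_{y^k}$ satisfies the defining property of a coefficient vector, namely $P_i \id_z = z_i$ for every agent $i \in \V$, and then to observe that this property is all the PEP machinery ever uses about a variable, so that $z_i$ need not be stored as a separate column of $P_i$.

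First I would recall that each coefficient vector $\id_{y^k} \in \Rvec{p}$ is characterized by $P_i \id_{y^k} = y_i^k$, a relation valid for all agents $i$ simultaneously because every block $P_i$ collects the variables of agent $i$ in the same fixed column order. Then, by linearity of the matrix-vector product,
\[ P_i \id_z = \sum_{k=1}^p \beta_k \, P_i \id_{y^k} = \sum_{k=1}^p \beta_k \, y_i^k = z_i \qquad \text{for all } i \in \V, \]
so the declared linear-combination relation \eqref{eq:combi} holds by construction rather than being imposed as an extra constraint, and the single vector $\id_z$ serves all agents at once. This last point uses that the combination is coordinated, i.e. the coefficients $\beta_k$ do not depend on $i$; otherwise the agents would not all play the same role and Assumption \ref{ass:equiva} would fail.

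Next I would argue that nothing more is needed. Since the whole PEP is Gram-representable, $z_i$ can enter the objective or a constraint only through a function value or through scalar products $z_i^T v_j$ with other stored vectors $v_j = P_j \id_v$, and any such scalar product rewrites as $z_i^T v_j = \id_z^T P_i^T P_j \id_v = \id_z^T G_{ij} \id_v$ (and $z_i^T z_i = \id_z^T \Ga \id_z$ on a fully symmetrized solution), i.e. as a linear expression in the blocks of the existing Gram matrix. Hence every expression that would otherwise require the column $z_i$ of $P_i$ can instead be written with the precomputed $\id_z$ applied to the current blocks of $G$, so $P_i$ and $G$ stay smaller. I do not expect a genuine obstacle here: the only points needing care are that the combination really be coordinated (already a hypothesis) and that, if $z_i$ is later used as a point at which a local gradient or function value is sampled, the corresponding interpolation constraints be expressed with $\id_z$ exactly as for any other registered point.
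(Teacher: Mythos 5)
Your proposal is correct and follows essentially the same route as the paper's proof: both arguments rest on the linearity $P_i \id_z = \sum_{k} \beta_k P_i \id_{y^k} = z_i$, which shows the combination constraint holds by construction once $\id_z$ is defined from the existing coefficient vectors, so no new column of $P_i$ is needed. Your additional remarks on how $z_i$ then enters only through Gram-representable scalar products are accurate but not required for the statement itself.
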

\begin{proof} By definition, we have $z_i = P_i \id_z$ and $y_i^k = P_i \id_{y^k}$. Therefore, the linear equality \eqref{eq:combi} can be written as
  \[ P_i \qty(\id_z - \sum_{k=1}^p \beta_k \id_{y^k}) = 0, \]
which allows defining $\id_z$ based on the other vectors of coefficients $\id_z = \sum_{k=1}^p \beta_k \id_{y^k}$, without adding a new column in $P_i$.
\end{proof}

Concerning the consensus steps, Theorem \ref{thm:agent_indep_perf} applies to the set of averaging matrices $\Wcl{\lm}{\lp}$, defined in Section \ref{sec:agentPEP}. Necessary interpolation constraints for this set of matrices are given in \eqref{eq:avg_pres2}, \eqref{eq:SDP_cons2} and \eqref{eq:sym_cons2}, from Corollary \ref{cor:conscons}.
Proposition \ref{prop:consensus_interp_Gs} below shows that these interpolation constraints are agent-independently Gram-representable, by expressing them in terms of $\Gt$ and $\Gd$. In principle, the compact PEP formulation could apply to a general set of averaging matrices $\mc{M}$ for which there exist Gram-representable interpolation constraints. Future work may include the description of other classes of averaging matrices, in particular, the extension to non-symmetric stochastic matrices with a bound on their singular value.
\begin{proposition}[Averaging matrix interpolation constraints] \label{prop:consensus_interp_Gs}
  The averaging matrix interpolation constraints \eqref{eq:avg_pres2}, \eqref{eq:SDP_cons2} and \eqref{eq:sym_cons2}, from Corollary \ref{cor:conscons}, are agent-independently Gram-representable and can be expressed using $\Gt$ and $\Gd \in \Rmat{p}{p}$ as:
  \begin{align}
    (\id_{X}-\id_{Y})^T \Gt (\id_{X}-\id_{Y}) = 0, \label{eq:avg_pres_gt}\\
    (\id_Y - \lm \id_X)^T \Gd (\id_Y - \lp \id_X) \preceq 0, \label{eq:var_red_gd} \\
    \id_Y^T(\Gd)\id_X  -  \id_X^T(\Gd)\id_Y = 0, \label{eq:sym_gd_gt}
  \end{align}
  where $\id_X, \id_Y \in \Rvec{p}$ are matrices of coefficients such that $P_i \id_X = X_i$, $P_i \id_Y = Y_i$ with $X_i, Y_i \in \Rmat{d}{\K}$ the matrices with iterates $x_i^k, y_i^k \in \Rvec{d}$ as columns.
\end{proposition}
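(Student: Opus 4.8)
The plan is to rewrite each of the three consensus interpolation conditions of Theorem~\ref{thm:sufficiency} applied with $\S=\C$ --- that is, $\Xb=\Yb$ \eqref{eq:subset_pres}, $(\Yp-\lm\Xp)^T(\Yp-\lp\Xp)\preceq 0$ \eqref{eq:SDP_cons}, and $\Xp^T\Yp=\Yp^T\Xp$ \eqref{eq:sym_cons} --- into equivalent conditions involving only the compact blocks $\Gt$ \eqref{eq:def_GT_GD} and $\Gd$ \eqref{eq:Gadt}, by systematically invoking the scalar-product identities \eqref{eq:useGt} and \eqref{eq:useGd}. The first step is to record the blockwise form of the projections: since $\C$ is the consensus subspace, $\Xb=\mathbf{1}_\N\kron\overline{X}$ with $\overline{X}=\frac{1}{\N}\sum_{i=1}^\N X_i$ (and similarly for $Y$), so the block of $\Xp=X-\Xb$ belonging to agent $i$ is exactly the centered matrix $X_i-\overline{X}$, and likewise the agent-$i$ block of $\Yp$ is $Y_i-\overline{Y}$. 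Hence every quantity appearing in \eqref{eq:subset_pres}--\eqref{eq:sym_cons} is a sum over agents of products of centered agent variables, which is precisely the shape handled by \eqref{eq:useGd} (applied to each pair of columns, so that the identity holds at the level of the full $\K\times\K$ matrices, not just their traces).

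For \eqref{eq:subset_pres}: $\Xb=\Yb$ holds iff $\overline{X}=\overline{Y}$, which in turn holds iff the Gram matrix $(\overline{X}-\overline{Y})^T(\overline{X}-\overline{Y})$ vanishes. Writing $\overline{X}-\overline{Y}=\frac{1}{\N}\sum_i(X_i-Y_i)$ with $X_i-Y_i=P_i(\id_X-\id_Y)$, this Gram matrix equals $\frac{1}{\N^2}\sum_{i,j}(\id_X-\id_Y)^TG_{ij}(\id_X-\id_Y)=(\id_X-\id_Y)^T\Gt(\id_X-\id_Y)$ by the definition of $\Gt$, so \eqref{eq:subset_pres} is equivalent to \eqref{eq:avg_pres_gt}.

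For \eqref{eq:SDP_cons}: the agent-$i$ block of $\Yp-\lm\Xp$ is $(Y_i-\overline{Y})-\lm(X_i-\overline{X})$, which is the centered version of the agent variable with coefficient matrix $\id_Y-\lm\id_X$; likewise the block of $\Yp-\lp\Xp$ is the centered variable with coefficients $\id_Y-\lp\id_X$. Hence $(\Yp-\lm\Xp)^T(\Yp-\lp\Xp)=\sum_i\bigl[(Y_i-\overline{Y})-\lm(X_i-\overline{X})\bigr]^T\bigl[(Y_i-\overline{Y})-\lp(X_i-\overline{X})\bigr]=\N\,(\id_Y-\lm\id_X)^T\Gd(\id_Y-\lp\id_X)$ by \eqref{eq:useGd}, and since $\N>0$ this matrix is negative semidefinite iff $(\id_Y-\lm\id_X)^T\Gd(\id_Y-\lp\id_X)\preceq 0$, giving \eqref{eq:var_red_gd}. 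For \eqref{eq:sym_cons}, the same substitution gives $\Xp^T\Yp=\sum_i(X_i-\overline{X})^T(Y_i-\overline{Y})=\N\,\id_X^T\Gd\id_Y$ and $\Yp^T\Xp=\N\,\id_Y^T\Gd\id_X$, so $\Xp^T\Yp-\Yp^T\Xp=0$ becomes $\id_Y^T\Gd\id_X-\id_X^T\Gd\id_Y=0$, which is \eqref{eq:sym_gd_gt}. Finally, none of \eqref{eq:avg_pres_gt}--\eqref{eq:sym_gd_gt} contains $\N$ (their only coefficients are the fixed $\lm,\lp$), and each is a linear equality or an LMI in $\Gt$ and $\Gd$, so by Definition~\ref{def:agent-Gram-rep} the consensus constraints are agent-independently Gram-representable.

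The argument is essentially bookkeeping, and the only point demanding care is the treatment of the projections: one has to verify that the agent-$i$ block of each projected matrix is the naive centered block $X_i-\overline{X}$, and that \eqref{eq:useGd} may be applied column-by-column so that the resulting identities hold as $\K\times\K$ matrix identities. Once this is in place the equivalences are immediate, since multiplying an equality-to-zero or an LMI by the positive scalar $\N$ (or $\frac{1}{\N}$) leaves it unchanged. I do not expect any genuine obstacle; the substance of the statement is just the observation that the centered, agent-averaged quantities entering the consensus interpolation conditions are exactly those encoded by $\Gt$ and $\Gd$.
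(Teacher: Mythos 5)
Your proof is correct and follows essentially the same route as the paper's: identify the agent-$i$ blocks of the projections as the centered quantities $X_i-\overline{X}$, $Y_i-\overline{Y}$, rewrite each of the three conditions as agent sums of products of (centered) blocks, and apply the identities \eqref{eq:useGt} and \eqref{eq:useGd} column-by-column (the paper divides by $\N$ where you multiply by $\N$, which is immaterial for an equality-to-zero or an LMI). The only presentational difference is that the paper introduces auxiliary matrices $U_i=Y_i-\lm X_i$, $V_i=Y_i-\lp X_i$ for the second condition, whereas you track the coefficient vectors $\id_Y-\lm\id_X$ directly; the substance is identical.
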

\begin{proof}
  Firstly, we express \eqref{eq:avg_pres2} as \eqref{eq:avg_pres_gt}. Equation \eqref{eq:avg_pres2} $\overline{X} = \overline{Y}$ means that the agent average is preserved and can be written with scalar products as:
  \[\qty(\frac{1}{\N} \sum_{i=1}^\N (X_i-Y_i))^T\qty(\frac{1}{\N} \sum_{j=1}^\N (X_j-Y_j)) = 0, \]
  where matrices $X_i, Y_i \in \Rmat{d}{\K}$ contain the different consensus iterates $x_i, y_i \in \Rvec{d}$ of agent $i$ as columns. Using relation \eqref{eq:useGt}, applied to columns of $X_i-Y_i$ and $X_j-Y_j$, this equation can be written as
  \begin{align}
    \frac{1}{\N^2} \sum_{i=1}^\N\sum_{j=1}^\N (X_i-Y_i)^T(X_j-Y_j) = (\id_{X}-\id_{Y})^T \Gt (\id_{X}-\id_{Y}) = 0.
  \end{align}
  Secondly, we express \eqref{eq:SDP_cons} as \eqref{eq:var_red_gd}. Equation $\eqref{eq:SDP_cons}$ involve centered matrices $\Xp, \Yp \in \Rmat{\N d}{\K}$,
  \begin{align}
    (\Yp-\lm\Xp)^T(\Yp-\lp\Xp) & \preceq 0, \label{eq:SDP_cons_proof}
  \end{align}
  We can write this product as a sum over the agents of products of centered variables, which then allows using relation \eqref{eq:useGd} to express it with $\Gd$. To this end, let us define $U_i = Y_i - \lm X_i$, $V_i = Y_i - \lp X_i$, and $\overline{U}, \overline{V}$ their corresponding agent average: $\overline{U} = \frac{1}{\N} \sum_{i=1}^\N U_i$, $\overline{V} = \frac{1}{\N} \sum_{i=1}^\N V_i$. Equation \eqref{eq:SDP_cons_proof} can be written as
  \begin{align}
    \sum_{i=1}^\N (U_i-\overline{U})^T(V_i-\overline{V}) & \preceq 0.
  \end{align}
  This can be reformulated using relation \eqref{eq:useGd} applied to columns of $U_i$ and $V_i$ and dividing the equation by $\N$:
  \[ \frac{1}{\N}\sum_{i=1}^\N (U_i-\overline{U})^T(V_i-\overline{V}) = \id_\U^T \Gd \id_V = (\id_Y - \lm \id_X)^T \Gd (\id_Y - \lp \id_X) \preceq 0 \]
  Finally, the symmetry condition \eqref{eq:sym_cons} can be expressed as \eqref{eq:sym_gd_gt} using a similar technique
  \begin{align}
      \Xp^T \Yp - \Yp^T\Xp = \sum_{i=1}^\N (X_i - \overline{X})^T(Y_i - \overline{Y}) -  \sum_{i=1}^\N (Y_i - \overline{Y})^T(X_i - \overline{X}) = 0.
  \end{align}
  This can be reformulated using relation \eqref{eq:useGd} applied to columns of $X_i$ and $Y_i$ and dividing the equation by $\N$:
  \[ \id_Y^T(\Gd)\id_X  -  \id_X^T(\Gd)\id_Y = 0 \]
\end{proof}

\paragraph*{Points common to all agents ($x^*$, $\xb$, etc)}
As explained in Section \ref{sec:equicl}, to build our compact PEP formulation for distributed optimization, we divide the agent-dependent PEP solutions into blocks of variables related to each agent, see \eqref{eq:Fagents} and \eqref{eq:defPi}, recalled below
\begin{equation} 
  \f= \begin{bmatrix}f_1^T & \dots & f_\N^T \end{bmatrix}, \quad \text{where $f_i = \qty[f_i^k]_{k \in I} \in \Rvec{q}$ is a vector with the $q$ function values of agent $i$,}
\end{equation}
\begin{equation} 
  P = \begin{bmatrix}P_1 & \dots & P_\N \end{bmatrix}  \quad \text{where $P_i \in \Rmat{d}{p}$ contains the $p$ vector variables related to agent $i$,} 
\end{equation}
e.g. $P_i = \qty[y_i^k~ x_i^k~ g_i^k]_{k \in I}$,
and the Gram matrix $G \in \Rmat{\N p}{\N p}$ is thus defined as
\begin{equation} 
  G = P^TP =   \begin{bmatrix}P_1^TP_1 & P_1^TP_2 & \dots \\ P_2^TP_1 & \ddots & \\  \vdots& & P_\N^TP_\N \end{bmatrix} =  \begin{bmatrix}G_{11} & G_{12} & \dots \\ G_{21} & \ddots & \\  \vdots& & G_{\N\N} \end{bmatrix}.
\end{equation}
The variables common to all agents, such as $x^*$ or $\xb^\K$, are copied into each agent block $P_i$ and their definitions are added as constraints of the problem, e.g.
\begin{equation} \label{eq:common_def}
   \xb_i = \xb = \frac{1}{\N} \sum_{j=1}^\N x_j \quad \text{for all $i\in\V$}, \qquad \text{ or } \qquad \frac{1}{n}\sum_{i=1}^\N \nabla f_i(x_i^*)=0, \quad \text{with $x^*_i = x^*_j = x^*$ for all $i,j \in\V$},
\end{equation}
Such definition constraints are agent-independently Gram-representable, as shown in Propositions \ref{prop:opti_cons} and \ref{prop:xb_cons} below.
If used in the problem, each agent $i$ also holds the variables for its local function and gradient values associated with this common point. For example, for $x^*$, we consider a new triplet $(x^*_i, g_i^*, f_i^*)$ for each agent $i$, which is taken into account in the interpolation constraints of its local function. These interpolation constraints ensure that $g_i^*$ and $f_i^*$ correspond to a gradient vector and function value at $x^*$ that are consistent with the given class of functions. Vectors $x^*_i$ and $g_i^*$ are columns of $P_i$ and value $f_i^*$ is an element of vector $f_i$.

\noindent To summarize, for any point $x_c$ common to all the agents in the problem, we need two elements in the PEP: 
\begin{enumerate}[leftmargin=1cm,label=(\roman*)]
  \item the definition constraints for $x_c$, e.g. \eqref{eq:common_def}, 
  \item the interpolation constraints for the new triplet $(x_c,g_c,f_c)$, if $g_c$ or $f_c$ used in the PEP.
\end{enumerate}
The addition of a new triplet of points to consider in the interpolation constraints does not alter the result of Proposition \ref{prop:fct_interp_2} and they can still be written as agent-independently Gram-representable constraints with $\fa$ and $\Ga$. 
The definition constraints can be treated as any other constraint of the problem, e.g. the initial constraints, and these constraints can thus be formulated independently of $\N$ if they can be written with scale-invariant expressions (see Definition \ref{def:scale-invariant} and Theorem \ref{thm:agent_indep_perf}). Proposition \ref{prop:opti_cons} gives an agent-independent formulation for the definition constraint of the optimal solution $x^*$ of the decentralized optimization problem \eqref{opt:dec_prob}.
\begin{proposition}[Optimality constraint] \label{prop:opti_cons}
  Let $x^*$ be an optimal point for the decentralized optimization problem \eqref{opt:dec_prob}. The definition constraints for the common variable $x^*$ in a PEP restricted to fully symmetric solutions $(\f^s,G^s)$ \eqref{eq:Fs}-\eqref{eq:Gs}, given by the system-level stationarity constraint
  \begin{equation} \label{eq:common_def_xs}
    \frac{1}{n}\sum_{i=1}^\N \nabla f_i(x_i^*)=0, \quad \text{with $x^*_i = x^*_j = x^*$ for all $i,j \in\V$},
 \end{equation}
  is agent-independently Gram-representable and can be expressed as
  \begin{equation} \label{eq:opti_cons}
     \id_{g^*}^T (\Gt) \id_{g^*} = 0 \quad \text{ and } \quad \id_{x^*}^T (\Gd) \id_{x^*} = 0,
  \end{equation}
  where $\id_{g^*}, \id_{x^*} \in \Rvec{p}$ are vectors of coefficients such that $P_i \id_{g^*} = g_i^* = \nabla f_i(x^*)$, and $P_i \id_{x^*} = x_i^*$ for all $i=1,\dots,\N$.
\end{proposition}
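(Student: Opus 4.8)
The plan is to decompose the system-level stationarity constraint \eqref{eq:common_def_xs} into its two independent parts and to reformulate each at the Gram level using the $\N$-free representations of averaged scalar products already available, namely relation \eqref{eq:useGt} from the proof of Lemma \ref{lem:scale-invariant} and relation \eqref{eq:useGd}. The two parts are: (i) the first-order optimality condition $\frac{1}{\N}\sum_{i=1}^\N g_i^* = 0$, where $g_i^* = \nabla f_i(x_i^*)$ is carried as a local gradient variable attached to agent $i$; and (ii) the consensus requirement $x_i^* = x_j^*$ for all $i,j\in\V$ that all agents share the same minimizer.

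For part (i), since $\frac{1}{\N}\sum_i g_i^*$ is a vector, the condition is equivalent to $\bigl\|\frac{1}{\N}\sum_i g_i^*\bigr\|^2 = 0$; expanding the square gives $\frac{1}{\N^2}\sum_{i=1}^\N\sum_{j=1}^\N (g_i^*)^Tg_j^*$, which is a scale-invariant expression of type (c) in Definition \ref{def:scale-invariant}. Under the fully symmetric restriction to $(F^s,G^s)$, relation \eqref{eq:useGt} applied with $x=y=g^*$ rewrites this double sum as $\id_{g^*}^T\Gt\id_{g^*}$, so part (i) becomes $\id_{g^*}^T\Gt\id_{g^*} = 0$. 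For part (ii), the equalities $x_i^* = x_j^*$ for all $i,j$ are equivalent to $x_i^* = \bar{x}^*$ for every $i$, with $\bar{x}^* = \frac{1}{\N}\sum_j x_j^*$, and hence --- because a sum of nonnegative terms vanishes if and only if every term does --- to $\frac{1}{\N}\sum_{i=1}^\N \|x_i^* - \bar{x}^*\|^2 = 0$. The left-hand side is scale-invariant (a combination of types (b) and (c)); relation \eqref{eq:useGd} with $x=y=x^*$ rewrites it as $\id_{x^*}^T\Gd\id_{x^*}$, so part (ii) becomes $\id_{x^*}^T\Gd\id_{x^*} = 0$. Combining the two gives the claimed equivalence of \eqref{eq:common_def_xs} with \eqref{eq:opti_cons}, and since $\id_{g^*}$, $\id_{x^*}$, $\Gt$, $\Gd$ carry no dependence on $\N$, the constraint is agent-independently Gram-representable in the sense of Definition \ref{def:agent-Gram-rep}.

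I do not expect a genuine obstacle here; the only point requiring care is the passage from the vector equation and the family of pairwise equalities to their quadratic surrogates, i.e. checking in both directions that replacing $v=0$ by $\|v\|^2=0$, and a family of equalities by a single sum of squared norms, loses no information --- which holds because a Euclidean norm vanishes only at the zero vector. All the substantive work, namely the $\N$-independent Gram representations of averages of scalar products, is already carried out in \eqref{eq:useGt} and \eqref{eq:useGd}, so this proof reduces to an application of those two relations.
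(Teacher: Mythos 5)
Your proof is correct and follows essentially the same route as the paper's: squaring the average-gradient condition and applying relation \eqref{eq:useGt}, then encoding the consensus of the $x_i^*$ via a vanishing sum of squared deviations and applying relation \eqref{eq:useGd}. The only cosmetic difference is that you measure deviations from the mean $\bar{x}^*$ while the paper sums squared pairwise differences $\|x_i^*-x_j^*\|^2$; these agree by the standard variance identity and lead to the same expression $\id_{x^*}^T\Gd\,\id_{x^*}=0$.
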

\begin{proof}
  The first constraint in \eqref{eq:common_def_xs} guarantees that the optimal point $x_i^*$ held by each agent satisfies the system-level stationarity constraint  $\frac{1}{\N}\sum_{i=1}^\N \nabla f_i(x^*_i) = 0.$
  Then, the second set of constraints imposes that the local optimal point $x_i^*$ is the same for any agent $i$. Let $g_i^*$ denote the gradient of function $f_i$ at the optimal point $x^*_i$. 
  We can formulate the stationarity constraint using only scalar products as
  \begin{align}
    \Biggl(\frac{1}{\N}\sum_{i=1}^\N g_i^*\Biggr)^T\Biggl(\frac{1}{\N}\sum_{j=1}^\N g_j^*\Biggr) = \frac{1}{\N^2}\sum_{i=1}^\N \sum_{j=1}^\N (g_i^*)^Tg_j^* = 0,
  \end{align}
  Using relation \eqref{eq:useGt}, applied to vectors $g_i^*$, $g_j^*$, we recover condition $\id_{g^*}^T (\Gt) \id_{g^*} = 0$.
  For the second set of constraints, we can impose them all with one constraint involving only scalar products: 
  \[ \frac{1}{2\N} \sum_{i=1}^\N\sum_{j=1}^\N (x_i^*-x_j^*)^T(x_i^*-x_j^*) = \frac{1}{\N^2} \sum_{i=1}^\N\sum_{j=1}^\N (x_i^*)^T x_i^* - (x_j^*)^Tx_j^* = 0 \]
  which can be written, using relation \eqref{eq:useGd}, as
  \[ \frac{1}{\N^2}\sum_{i=1}^\N\sum_{j=1}^\N (x_i^*)^Tx_i^* - (x_i^*)^Tx_j^* = \frac{1}{\N^2} \sum_{i=1}^\N \sum_{j=1}^\N (\id_{x^*})^T(G_{ii} - G_{ij}) (\id_{x^*}) = (\id_{x^*})^T (\Gd) (\id_{x^*}).\]
\end{proof}
\begin{remark} Without loss of generality, the optimal solution $x^*$ of problem \eqref{opt:dec_prob} can be set to $x^*=0$. In that case, the constraint $x^*_i = x^*_j$, written as $\id_{x^*}^T (\Gd) \id_{x^*} = 0$, is not needed to define $x^*$ properly. Indeed, it is sufficient to say that the coefficient vector is zero: $\id_{x^*} = 0$. This improves the numerical conditioning of the resulting SDP PEP.
\end{remark}

Proposition \ref{prop:xb_cons} gives an agent-independent formulation for the definition constraint of the agent average iterate $\xb^k$. 
To simplify notation, we omit the $k$ index for iterations.
\begin{proposition}[Agent average definition] \label{prop:xb_cons}
  We consider a PEP for distributed optimization, restricted to fully symmetric solutions $(\f^s,G^s)$ \eqref{eq:Fs}-\eqref{eq:Gs}. The definition constraints for $\xb$
  \begin{equation} \label{eq:avg_def}
     \xb_i = \xb = \frac{1}{\N} \sum_{j=1}^\N x_j \quad \text{for all $i\in\V$,}
  \end{equation} 
  are agent-independently Gram-representable, and can be written as 
  \[ \id_{\xb}^T (\Ga)\id_{\xb} + \id_x^T \Gt (\id_x - 2\id_{\xb}) = 0, \]
  where $\id_{\xb}, \id_{x} \in \Rvec{p}$ are vectors of coefficients such that $P_i \id_{\xb} = \xb_i$, and $P_i \id_{x} = x_i$ for all $i=1,\dots,\N$.
\end{proposition}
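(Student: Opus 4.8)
The plan is to collapse the family of $\N$ linear constraints \eqref{eq:avg_def}, one per agent, into a single Gram-representable quadratic equality, and then to apply the scalar-product identities \eqref{eq:useGa} and \eqref{eq:useGt} that hold on fully symmetric solutions $(F^s,G^s)$.

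First I would observe that, since each squared norm is nonnegative, the constraints $\xb_i = \frac{1}{\N}\sum_{j=1}^\N x_j$ for all $i\in\V$ are equivalent to the single equality
\[ \frac{1}{\N}\sum_{i=1}^\N \left\| \xb_i - \frac{1}{\N}\sum_{j=1}^\N x_j \right\|^2 = 0, \]
whose left-hand side involves only scalar products of $\xb_i$ (a variable of agent $i$) and of $\frac{1}{\N}\sum_j x_j$ (a common variable, copied into every agent block, so that the double sums over agents below are legitimate). Expanding the square produces three terms, each of which I would rewrite with the relations available for symmetric solutions: the term $\frac{1}{\N}\sum_i \xb_i^T\xb_i$ is the average over agents of a scalar product between two variables of the same agent, hence equals $\id_{\xb}^T\Ga\,\id_{\xb}$ by \eqref{eq:useGa}; the cross term $-\frac{2}{\N}\sum_i \xb_i^T\big(\frac{1}{\N}\sum_j x_j\big) = -\frac{2}{\N^2}\sum_i\sum_j \xb_i^T x_j$ is an average over the $\N^2$ pairs of agents, hence equals $-2\,\id_{\xb}^T\Gt\,\id_x$ by \eqref{eq:useGt}; and the last term $\frac{1}{\N}\sum_i \big(\frac{1}{\N}\sum_j x_j\big)^T\big(\frac{1}{\N}\sum_k x_k\big)$ is independent of $i$ and reduces to $\frac{1}{\N^2}\sum_j\sum_k x_j^T x_k = \id_x^T\Gt\,\id_x$, again by \eqref{eq:useGt}.

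Summing the three contributions gives $\id_{\xb}^T\Ga\,\id_{\xb} - 2\,\id_{\xb}^T\Gt\,\id_x + \id_x^T\Gt\,\id_x = 0$. Since $\Gt = \frac{1}{\N^2}\sum_{ij}G_{ij}$ and $G_{ij}=G_{ji}^T$, the block $\Gt$ is symmetric, so $\id_{\xb}^T\Gt\,\id_x = \id_x^T\Gt\,\id_{\xb}$, and factoring yields $\id_{\xb}^T\Ga\,\id_{\xb} + \id_x^T\Gt\,(\id_x-2\,\id_{\xb}) = 0$. As $\Ga$, $\Gt$ and the coefficient vectors carry no dependence on $\N$, this is an agent-independently Gram-representable constraint, which is the claimed expression.

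I do not expect a real obstacle here. The only point requiring a little care is the equivalence between the single quadratic equality and the whole family \eqref{eq:avg_def}: it rests on the nonnegativity of each $\|\xb_i - \frac{1}{\N}\sum_j x_j\|^2$ together with the fact that, on a fully symmetric solution, $\xb_i$ carries the same Gram data for every $i$, so that imposing the constraint once suffices. The rest is an expansion of a square and substitution of \eqref{eq:useGa} and \eqref{eq:useGt}.
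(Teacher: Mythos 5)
Your proof is correct and follows essentially the same route as the paper's: both collapse the $\N$ per-agent constraints into the single quadratic equality $\frac{1}{\N}\sum_{i=1}^\N\|\xb_i - \frac{1}{\N}\sum_j x_j\|^2 = 0$, expand, and substitute relations \eqref{eq:useGa} and \eqref{eq:useGt}. Your added remarks on why the collapse is an equivalence (nonnegativity of each summand) and on the symmetry of $\Gt$ needed to regroup the cross term are details the paper leaves implicit, but the argument is the same.
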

\begin{proof}
  We can formulate the set of constraints \eqref{eq:avg_def} with one constraint involving only scalar products of variables: 
  \[ \frac{1}{\N}\sum_{i=1}^\N\Big(\xb^\K_i - \frac{1}{\N} \sum_{j=1}^\N x_j^\K\Big)^T\Big(\xb^\K_i - \frac{1}{\N} \sum_{l=1}^\N x_l^\K\Big) = 0\]
  This expression can be expanded to be written with $\Ga$ and $\Gt$:
\begin{align}
  \frac{1}{\N}\sum_{i=1}^\N(\xb_i)^T(\xb_i) + \frac{1}{\N^2} \sum_{j=1}^\N \sum_{l=1}^\N x_j^Tx_l - \frac{2}{\N^2} \sum_{i=1}^\N\sum_{j=1}^\N x_j^T\xb_i = \id_{\xb}^T (\Ga)\id_{\xb} + \id_x^T \Gt (\id_x - 2\id_{\xb}) = 0,
\end{align}
where we used relation \eqref{eq:useGa} for the first term and \eqref{eq:useGt} for the other two.
\end{proof}

\paragraph*{Initial conditions and performance measures}
By combining terms of the form of \eqref{eq:usefa}, \eqref{eq:useGa}, \eqref{eq:useGt}, \eqref{eq:usefa_single}, \eqref{eq:useGa_single}, and \eqref{eq:useGd}, possibly involving common points defined above, it is possible to express many initial conditions and performance measures in terms of $\fa$, $\Ga$, $\Gt$, and $\Gd$, without dependence on $\N$. Examples of usual initial conditions and their reformulation are given in Proposition \ref{prop:initial_cond} below. The proposition focuses on the initial conditions, but all the expressions involved in these constraints can also be adapted as a performance measure, using the last point $x^t$ instead of the initial point $x^0$.
\begin{proposition}[Initial conditions]  \label{prop:initial_cond}
  Let $x^* \in \Rvec{d}$ denote the optimal solution of the distributed optimization problem \eqref{opt:dec_prob}, $x_i^0 \in \Rvec{d}$ the initial iterate of agent $i$, $\xb^0$ the average initial iterate and $R \in \R$ a constant. Here is a list of initial conditions that are agent-independently Gram-representable when restricted to agent-symmetric solutions, together with their reformulations in terms of $\fa$, $\Ga$, $\Gt$, and $\Gd$:
   \begin{align}
     \text{Initial}&\text{ condition} \hspace{1cm}& \text{Reformulation} \\  
      \frac{1}{\N} \sum_{i=1}^\N \|x_i^0 - x^*\|^2 &\le R^2, \quad  & (\id_{x^0}-\id_{x^*})^T (\Ga) (\id_{x^0}-\id_{x^*}) \le R^2, \\[-2mm]
      \text{\small{or \hspace{12mm} $\|x_i^0 - x^*\|^2$}} &~ 
      \text{\small{$\le R^2$ \quad for all $i=1,\dots,\N$}} & \\
     \frac{1}{\N} \sum_{i=1}^\N \|\nabla f_i(x_i^0)\|^2 &\le R^2,  & (\id_{g^0})^T (\Ga) (\id_{g^0}) \le R^2, \\[-2mm]
     \text{\small{or \hspace{12mm} $\|\nabla f_i(x_i^0)\|^2$}} &~ 
      \text{\small{$\le R^2$ \quad for all $i=1,\dots,\N$}} & \\
    \frac{1}{\N} \sum_{i=1}^\N \|x_i^0 - \xb^0\|^2 &\le R^2, & (\id_{x^0})^T \Gd (\id_{x^0}) \le R^2, \\
     x_i^0 - x_j^0 &= 0 \quad \text{for all $i,j=1,\dots,\N$} & (\id_{x^0})^T \Gd (\id_{x^0}) = 0, \\
     \frac{1}{\N}\sum_{i=1}^\N \qty(f_i(\xb^0) - f_i(x^*)) &\le R &  \hspace*{-4cm} \qty(\id_{f(\xb^0)} - \id_{f(x^*)})^T \fa \qty(\id_{f(\xb^0)} - \id_{f(x^*)})  &\le R
   \end{align}
   where $\id_{x^0}, \id_{x^*}, \id_{g^0} \in \Rvec{p}$ and $\id_{f(\xb^0)}, \id_{f(x^*)} \in \Rvec{q}$ are vectors of coefficients such that $P_i \id_{x^0} = x_i^0$, $P_i \id_{x^*} = x^*$, $P_i \id_{g^0} = \nabla f_i(x_i^0)$, $\id_{f(\xb^0)}^T f_i = f_i(\xb^0)$ and $\id_{f(x^*)}^T f_i = f_i(x^*)$, for all $i = 1,\dots,\N$.
\end{proposition}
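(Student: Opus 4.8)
The plan is to treat each listed initial condition in turn, using the dichotomy already exploited in the proof of Theorem \ref{thm:agent_indep_perf}: every condition in the list is either a \emph{scale-invariant} Gram-representable expression (Definition \ref{def:scale-invariant}) or a \emph{single-agent} constraint (Definition \ref{def:single_agent}) imposed identically on every agent $i\in\V$ (and the two written forms of each bound are of these two respective types). Since all agents are equivalent, Corollary \ref{cor:fullsymsol} lets us restrict to a fully symmetric solution $(F^s,G^s)$, so that $f_i=\fa$, $G_{ii}=\Ga$ for every $i$, the averages over $i$ and over pairs $(i,j)$ collapse onto $\Ga$ and $\Gt$, and centered averages collapse onto $\Gd$.

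First, I would record the elementary substitutions to be applied: for scale-invariant expressions, relations \eqref{eq:usefa}, \eqref{eq:useGa}, \eqref{eq:useGt} for the three allowed term types, plus \eqref{eq:useGd} for the average of scalar products of centered variables (which, as noted after its statement, is just the combination $\Gd=\Ga-\Gt$ of \eqref{eq:useGa} and \eqref{eq:useGt}); for single-agent constraints, relations \eqref{eq:usefa_single} and \eqref{eq:useGa_single}, where the point is precisely that $f_i(x_i)=\id_{f(x)}^T\fa$ and $x_i^Ty_i=\id_x^T\Ga\id_y$ hold for \emph{every} $i$ on a symmetric solution, so the per-agent form and the averaged form yield the same reformulation.

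Then I would run through the rows. For $\frac1\N\sum_i\|x_i^0-x^*\|^2$ (or $\|x_i^0-x^*\|^2\le R^2$ for all $i$): expand $(x_i^0-x^*)^T(x_i^0-x^*)$; in the averaged form this is a term of type (b) with combined coefficient vector $\id_{x^0}-\id_{x^*}$, so \eqref{eq:useGa} gives $(\id_{x^0}-\id_{x^*})^T\Ga(\id_{x^0}-\id_{x^*})$; in the per-agent form \eqref{eq:useGa_single} gives the same, using that $x^*$ is copied into each $P_i$ with the single coefficient vector $\id_{x^*}$ (cf.\ Proposition \ref{prop:opti_cons}). The row for $\|\nabla f_i(x_i^0)\|^2$ is identical with $\id_{g^0}$. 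For $\frac1\N\sum_i\|x_i^0-\xb^0\|^2$: since $\xb^0$ is the agent average, this is exactly the left-hand side of \eqref{eq:useGd} with $x=y=x^0$, hence equals $\id_{x^0}^T\Gd\id_{x^0}$. For $x_i^0-x_j^0=0$ for all $i,j$: this is equivalent to $x_i^0=\xb^0$ for all $i$, i.e.\ $\frac1\N\sum_i\|x_i^0-\xb^0\|^2=0$, which is the previous case with $R=0$, giving $\id_{x^0}^T\Gd\id_{x^0}=0$. Finally, $\frac1\N\sum_i\bigl(f_i(\xb^0)-f_i(x^*)\bigr)$ is a combination of two terms of type (a), so \eqref{eq:usefa} applied to each term (with $\xb^0$ and $x^*$ both copied into every agent block and handled as in Propositions \ref{prop:xb_cons}, \ref{prop:opti_cons}) produces the stated expression in $\fa$.

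The only genuinely delicate point — everything else being routine algebra — is the bookkeeping for the points common to all agents ($x^*$ and $\xb^0$): one must invoke the construction of Section \ref{sec:equicl} and Propositions \ref{prop:xb_cons}--\ref{prop:opti_cons} to be sure that such a point is represented by a \emph{single} coefficient vector valid for every $P_i$, so that the elementary-term decomposition of Definition \ref{def:scale-invariant} is actually available and $\N$ does not reappear. Once that is in place, each reformulation follows by direct substitution, and since all the substitution relations are $\N$-free, so are the resulting constraints.
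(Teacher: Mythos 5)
Your proposal is correct and follows essentially the same route as the paper: each row is reformulated by direct substitution using relations \eqref{eq:useGa}, \eqref{eq:useGa_single}, \eqref{eq:useGd} and \eqref{eq:usefa}, with the per-agent and averaged forms of each bound collapsing to the same expression on a fully symmetric solution. The only (immaterial) difference is the fourth condition, which you reduce to $x_i^0=\xb^0$ for all $i$ and hence to the third condition with $R=0$, whereas the paper expands $\frac{1}{2\N^2}\sum_{i,j}\|x_i^0-x_j^0\|^2=0$ directly before applying \eqref{eq:useGd} — both yield $\id_{x^0}^T\Gd\,\id_{x^0}=0$.
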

\begin{proof}
  To obtain the first two reformulations, we can use relation \eqref{eq:useGa} applied respectively to vectors $x_i^0 - x^*$ and $\nabla f_i(x_i^0)$
  \begin{align}
    \frac{1}{\N} \sum_{i=1}^\N \|x_i^0 -x^*\|^2 &= \frac{1}{\N} \sum_{i=1}^\N (x_i^0 - x^*)^T (x_i^0 - x^*) = (\id_{x^0}-\id_{x^*})^T (\Ga) (\id_{x^0}-\id_{x^*}) \le R^2. \\
    \frac{1}{\N} \sum_{i=1}^\N \|\nabla f_i(x_i^0)\|^2 &= \frac{1}{\N} \sum_{i=1}^\N (\nabla f_i(x_i^0))^T (\nabla f_i(x_i^0)) = (\id_{g^0})^T (\Ga) (\id_{g^0}) \le R^2.
  \end{align}
    When considering the alternative uniform conditions for all agents, we obtain the same reformulations using relation \eqref{eq:useGa_single}, applied respectively to vectors $x_i^0 - x^*$ and $\nabla f_i(x_i^0)$. For the third condition, we can directly use relation \eqref{eq:useGd} to obtain it. Then, the fourth condition is equivalent to
  \[ \frac{1}{2\N^2}\sum_{i=1}^\N\sum_{j=1}^\N (x_i^0-x_j^0)^T(x_i^0-x_j^0) = \frac{1}{\N^2}\sum_{i=1}^\N\sum_{j=1}^\N (x_i^0)^Tx_i^0 - (x_i^0)^Tx_j^0 = 0, \]
  which can be written, using relation \eqref{eq:useGd}, as
  \[ \frac{1}{\N^2}\sum_{i=1}^\N\sum_{j=1}^\N (x_i^0)^Tx_i^0 - (x_i^0)^Tx_j^0 = \frac{1}{\N^2} \sum_{i=1}^\N \sum_{j=1}^\N (\id_{x^0})^T(G_{ii} - G_{ij}) (\id_{x^0}) = (\id_{x^0})^T (\Gd) (\id_{x^0}).\]
  Finally, the fifth and last condition can be reformulated with $\fa$ using relation \eqref{eq:usefa} applied to $\frac{1}{\N}\sum_{i=1}^\N f_i(\xb^0)$ and $\frac{1}{\N}\sum_{i=1}^\N f_i(x^*)$.
\end{proof}

\subsection{With multiple equivalence classes of agents}
Let $\T$ be a partition of the agent set $\V$ into $\U$ equivalence classes of agents (see Definition \ref{def:equicl}). Considering the PEP restricted to symmetrized agent-class solutions $\Gss$ and $\Fss$ \eqref{eq:Fsu}-\eqref{eq:Gsu}, we now detail how we can express all the PEP elements in terms of $\fa^u$, $\Ga^u$, $\Gr^u$, and $\Gc^{uv}$ ($u,v=1\dots,\U$), to be able to use this blocks as variable of the compact SDP PEP formulation. 
For the SDP condition $G^s \succeq 0$, we can be express it with $\Ga^u$, $\Gr^u$, and $\Gc^{uv}$ using Lemma \ref{lem:SDPGssets}. For the other PEP elements, we need to adapt the reformulation techniques \eqref{eq:usefa}, \eqref{eq:useGa}, \eqref{eq:useGt}, \eqref{eq:usefa_single}, \eqref{eq:useGa_single} and \eqref{eq:useGd} to the situation where the agent set $\V$ is partitioned into $\U$ subsets of equivalent agents $\T = \{\V_1, \dots, \V_\U\}$. 
The terms that we can be expressed in terms of $\fa^u$, $\Ga^u$, $\Gr^u$, and $\Gc^{uv}$ are the following (for any variables $x_i$ and $y_j$ related to agents $i$ and $j$, including the case $x_i = y_j$):
\begin{itemize}
  \item  The function value of a given agent $j \in \V_u$
  \begin{equation} \label{eq:usefau}
    f_j(x_j) = \frac{1}{\N_u} \sum_{i\in\V_u} f_i(x_i) = \id_{f(x)}^T \fa^u, \qquad \text{for any $j=1,\dots,\N$,}
  \end{equation}
  where the first equality holds because $f_i(x_i) =  f_j(x_j)$ for any $i,j \in \V_u$, by definition of the symmetric agent-class solution $\Fss$ \eqref{eq:Fsu}.
  This implies that
   \begin{equation} \label{eq:usefat}
       \frac{1}{\N} \sum_{i=1}^\N f_i(x_i) = \frac{1}{\N} \sum_{u=1}^\U \sum_{j \in V_u}^\N f_j(x_j)= \frac{1}{\N} \sum_{u=1}^\U \N_u \id_{f(x)}^T \fa^u = \id_{f(x)}^T \fat,
   \end{equation}
   where vector $\fat \in \Rvec{q}$ is given by
   \begin{equation} \label{eq:def_fat}
     \fat = \frac{1}{\N} \sum_{u=1}^\U  \N_u \fa^u.
  \end{equation}
   \item The scalar product between two variables related to the same agent $i\in \V_u$:
   \begin{equation} \label{eq:useGau}
       x_i^Ty_i = \frac{1}{\N_u} \sum_{j \in V_u}^\N x_j^Ty_j = \id_x^T \Ga^u \id_y  \qquad \text{for any $i=1,\dots,\N$,}
   \end{equation}
   where the first equality holds because $x_i^Ty_i = x_j^Ty_j $ for any $i,j \in \V_u$, by definition of the symmetric agent-class solution $\Gss$ \eqref{eq:Gsu}.
   This implies that
   \begin{equation} \label{eq:useGA}
       \frac{1}{\N} \sum_{i=1}^\N x_i^Ty_i = \frac{1}{\N} \sum_{u=1}^\U \sum_{j \in V_u}^\N x_j^Ty_j = \frac{1}{\N} \sum_{u=1}^\U \N_u \id_x^T \Ga^u \id_y = \id_x^T (\Gat) \id_y,
   \end{equation}
   where matrix $\Gat \in \Rmat{p}{p}$ is given by
   \begin{equation} \label{eq:def_Gat}
     \Gat = \frac{1}{\N} \sum_{u=1}^\U  \N_u \Ga^u.
  \end{equation}
   \item The average over the $\N^2$ pairs of agents of the scalar products between two variables, each related to any agent:
   \begin{align}
       \frac{1}{\N^2} \sum_{i=1}^\N \sum_{j=1}^\N x_i^Ty_j &= \frac{1}{\N^2} \sum_{u=1}^\U \sum_{v=1}^\U \sum_{i\in\V_u} \sum_{j\in\V_v} x_i^T y_j =
       \frac{1}{\N^2} \sum_{u=1}^\U \sum_{i\in\V_u} \sum_{j\in\V_u} x_i^T y_j + \frac{1}{\N^2} \sum_{u=1}^\U \sum_{v\ne u} \sum_{i\in\V_u} \sum_{j\in\V_v} x_i^T y_j, \\
       \frac{1}{\N^2} \sum_{i=1}^\N \sum_{j=1}^\N x_i^Ty_j &= \id_x^T \qty(\frac{1}{\N^2} \sum_{u=1}^\U  \bigl(\N_u \Ga^u + \N_u(\N_u-1)\Gr^u \bigr) + \frac{1}{\N^2} \sum_{u=1}^\U \sum_{v \ne u} \N_u\N_v \, \Gc^{uv}) \id_y = \id_x^T (\Gtt) \id_y,
       \label{eq:useGtu}
   \end{align}
   where matrix $\Gtt \in \Rmat{p}{p}$ is the sum of all the $p \times p$ blocks of $\Ht$ divided by $\N^2$:
   \begin{equation} \label{eq:def_GTt}
     \Gtt = \frac{1}{\N^2} \qty(\sum_{u=1}^\U  \N_u(\Ga^u + (\N_u-1)\Gr^u) + \sum_{u=1}^\U \sum_{v \ne u} \N_u\N_v\, \Gc^{uv}).
  \end{equation}
   \item The average over the $\N$ agents of the scalar products of two centered variables related to the same agents:
   \begin{align}
     \frac{1}{\N} \sum_{i=1}^\N \qty(x_i - \xb)^T \qty(y_i - \yb) &= \frac{1}{\N^2} \sum_{i=1}^\N \sum_{j=1}^\N (x_i^Ty_i  - x_i^Ty_j)  = \frac{1}{\N^2} \sum_{u=1}^\U \sum_{v=1}^\U \sum_{i\in\V_u} \sum_{j\in\V_v} (x_i^Ty_i  - x_i^Ty_j), \\
     &  = \id_x^T \qty(\frac{1}{\N^2} \sum_{u=1}^\U \N_u(\N_u-1) (\Ga^u-\Gr^u) + \frac{1}{\N^2} \sum_{u=1}^\U \sum_{v \ne u} \N_u \N_v (\Ga^u - \Gc^{uv})) \id_y, \\
    \frac{1}{\N} \sum_{i=1}^\N \qty(x_i - \xb)^T \qty(y_i - \yb) &= \id_x^T(\Gdt) \id_y
     \label{eq:useGdu}
   \end{align}
   where $\xb = \frac{1}{\N} \sum_{i=1}^\N x_i$, $\yb = \frac{1}{\N} \sum_{i=1}^\N y_i$, and matrix $\Gdt \in \Rmat{p}{p}$ is given by
   \begin{equation} \label{eq:def_GDt}
     \Gdt = \frac{1}{\N^2} \qty(\sum_{u=1}^\U \N_u(\N_u-1) (\Ga^u-\Gr^u) + \sum_{u=1}^\U \sum_{v \ne u} \N_u \N_v (\Ga^u - \Gc^{uv}))
   \end{equation}
 \end{itemize}
 By definitions \eqref{eq:def_Gat}, \eqref{eq:def_GTt} and \eqref{eq:def_GDt}, we have $\Gat = \Gtt + \Gdt$.
Using relations \eqref{eq:usefau}, \eqref{eq:usefat}, \eqref{eq:useGau}, \eqref{eq:useGA}, \eqref{eq:useGtu} and \eqref{eq:useGdu}, we can easily adapt Propositions \ref{prop:fct_interp}-\ref{prop:initial_cond} to this general case with multiple equivalence classes of agents to build all the PEP elements for distributed optimization only based on blocks $\fa^u$, $\Ga^u$, $\Gr^u$, and $\Gc^{uv}$.

 \begin{proposition}[Function interpolation constraints] \label{prop:fct_interp_2}
   Let $\F_u$ be a set of functions for which there exist linearly Gram-representable interpolation constraints. When considering symmetric agent-class PEP solutions $(\Fss-\Gss)$ \eqref{eq:Fsu}-\eqref{eq:Gsu}, the set of interpolation constraints for $\F_u$, applied independently to each agent of a given equivalence class $\V_u$, can be expressed with $\fa^u$ and $\Ga^u$.
 \end{proposition}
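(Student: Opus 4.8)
The plan is to replay the argument behind Proposition~\ref{prop:fct_interp}, now carried out separately inside each equivalence class. The starting observation is that the membership $f_i \in \F_u$ is, by hypothesis, encoded by a finite family of linearly Gram-representable constraints, each of which only couples the local function values $f_i(x_i^k)$ of agent $i$ with scalar products between vectors (iterates and gradients) attached to that same agent $i$, and whose coefficients come solely from the description of $\F_u$. Every such constraint is therefore of the single-agent type of Definition~\ref{def:single_agent}, and imposing ``$f_i \in \F_u$ for all $i \in \V_u$'' amounts to imposing $\N_u$ copies of one fixed constraint template, one per agent of the class.

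Next I would restrict to symmetric agent-class solutions $(\Fss,\Gss)$ in the form of \eqref{eq:Fsu}--\eqref{eq:Gsu}. On such solutions the block structure is flat within a class: for every $i \in \V_u$ one has $f_i = \fa^u$ and $G_{ii} = \Ga^u$. Consequently, relation \eqref{eq:usefau} turns any function-value term $f_i(x_i^k) = \id_{f(x)}^T f_i$ into $\id_{f(x)}^T \fa^u$, and relation \eqref{eq:useGau} turns any local scalar product $x_i^T y_i = \id_x^T G_{ii} \id_y$ into $\id_x^T \Ga^u \id_y$. Substituting these two identities into each interpolation constraint rewrites it, with unchanged coefficients, purely in terms of $\fa^u$ and $\Ga^u$; in particular no sum over agents appears, so no $\N$-dependent coefficient is introduced. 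One can mimic the convex-function example following Proposition~\ref{prop:fct_interp}, replacing $\fa$ by $\fa^u$ and $\Ga$ by $\Ga^u$.

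The one step that deserves a word is the collapse of the $\N_u$ agent-wise constraints into a single one: since $f_i = \fa^u$ and $G_{ii} = \Ga^u$ hold identically for every $i \in \V_u$ by construction of $\Fss$ and $\Gss$, all $\N_u$ copies of the rewritten template coincide, so imposing it for one representative agent is equivalent to imposing it for the whole class and nothing is lost. I do not expect any genuine difficulty here: the content is exactly that of Proposition~\ref{prop:fct_interp}, now bookkept over the partition $\T = \{\V_1,\dots,\V_\U\}$ instead of over the single class $\V$, and it combines with the analogous reformulations \eqref{eq:usefat}, \eqref{eq:useGA}, \eqref{eq:useGtu}, \eqref{eq:useGdu} of the remaining PEP components to yield the full compact formulation.
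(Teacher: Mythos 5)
Your proposal is correct and follows the same route as the paper's proof: both observe that the interpolation constraints for $\F_u$ are single-agent constraints which, on a symmetrized agent-class solution, coincide across all agents of $\V_u$, and both then apply \eqref{eq:usefau} and \eqref{eq:useGau} to rewrite them in terms of $\fa^u$ and $\Ga^u$. Your additional remark that no $\N$-dependent coefficient is introduced and that the $\N_u$ copies collapse to one is a slightly more explicit bookkeeping of the same argument.
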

 \begin{proof}
  The constraint $f_i \in \F_u$ for each $i\in \V_u$ can be written with a set of Gram-representable interpolation constraints involving function values of agent $i$ and scalar products between quantities (i.e. points and gradients) related to agent $i$. These are thus single-agent constraints, that are identical for all $i\in \V_u$ when we restrict to a symmetrized agent-class solution $(\Fss, \Gss)$ \eqref{eq:Fsu}-\eqref{eq:Gsu}. Therefore, they can all be expressed with $\fa^u$ and $\Ga^u$ using \eqref{eq:usefau} and \eqref{eq:useGau}.
\end{proof}

 \paragraph*{Algorithm description}
 The class of methods $\Ad$ that we consider is defined in Definition \ref{def:Ad} and may combine gradient evaluation, consensus steps, and linear combinations of variables. In the latter, the agents from the same equivalence class should define the same linear combinations of their local variables, otherwise they are not equivalent. Agents from different equivalence classes may apply different combinations. This allows considering heterogeneity in the algorithm parameters, for example, to analyze the effect of uncoordinated step-sizes. In any case, all the variables in the combinations are related to the same agent and can therefore be squared and formulated in PEP using $\Ga^u$, as in \eqref{eq:useGau}. 

 Concerning the consensus steps, Theorem \ref{thm:sets_perf} applies to the set of averaging matrices $\Wcl{\lm}{\lp}$, defined in Section \ref{sec:agentPEP}. Necessary interpolation constraints for this set of matrices are given in \eqref{eq:avg_pres2}, \eqref{eq:SDP_cons2} and \eqref{eq:sym_cons2}, from Corollary \ref{cor:conscons}.
 Proposition \ref{prop:consensus_interp_Gs} below shows that these interpolation constraints can be expressed in terms of $\Gtt$ \eqref{eq:def_GTt} and $\Gdt$ \eqref{eq:def_GDt}, and so in terms of $\Ga^u$, $\Gr^u$, and $\Gc^{uv}$ ($u,v=1,\dots,\U$).

 \begin{proposition}[Averaging matrix interpolation constraints] \label{prop:consensus_interp_Gss}
   The averaging matrix interpolation constraints \eqref{eq:avg_pres2}, \eqref{eq:SDP_cons2} and \eqref{eq:sym_cons2}, from Corollary \ref{cor:conscons} can be expressed with $\Ga^u$, $\Gr^u$, and $\Gc^{uv}$ ($u,v=1,\dots,\U$), as:
   \begin{align}
     (\id_{X}-\id_{Y})^T \Gtt (\id_{X}-\id_{Y}) = 0, \label{eq:avg_pres_gt_2}\\
     (\id_Y - \lm \id_X)^T \Gdt (\id_Y - \lp \id_X) \preceq 0, \label{eq:var_red_gd_2} \\
     \id_Y^T(\Gdt)\id_X  -  \id_X^T(\Gdt)\id_Y = 0, \label{eq:sym_gd_gt_2}
   \end{align}
   where $\Gtt$ and $\Gdt$ are defined in \eqref{eq:def_GTt} and \eqref{eq:def_GDt} and $\id_X, \id_Y \in \Rvec{p}$ are matrices of coefficients such that $P_i \id_X = X_i$, $P_i \id_Y = Y_i$ with $X_i, Y_i \in \Rmat{d}{\K}$ the matrices with iterates $x_i^k, y_i^k \in \Rvec{d}$ as columns.
 \end{proposition}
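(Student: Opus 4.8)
The plan is to follow the proof of Proposition~\ref{prop:consensus_interp_Gs} essentially verbatim, substituting the multi-class reformulation identities \eqref{eq:useGtu} and \eqref{eq:useGdu} — which express, respectively, the average over all $\N^2$ agent pairs of a scalar product and the average over the $\N$ agents of a scalar product of \emph{centered} variables in terms of $\Gtt$ and $\Gdt$ — in place of the single-class identities \eqref{eq:useGt} and \eqref{eq:useGd} used there. As in that proof, each of these identities is applied column by column to the iterate matrices $X_i,Y_i\in\Rmat{d}{\K}$, so that every scalar-product term becomes an entry of the corresponding $\K\times\K$ matrix built from the coefficient matrices $\id_X,\id_Y$. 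Since $\Gtt$ and $\Gdt$ are, by \eqref{eq:def_GTt} and \eqref{eq:def_GDt}, linear in $\Ga^u,\Gr^u,\Gc^{uv}$, expressing the constraints through $\Gtt$ and $\Gdt$ automatically expresses them through $\Ga^u,\Gr^u,\Gc^{uv}$.

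First, consider \eqref{eq:subset_pres}. Applied to the consensus subspace $\C$ it states $\overline{X}=\overline{Y}$, i.e.\ the agent average of every column is preserved; equivalently $\bigl(\tfrac1\N\sum_{i=1}^\N(X_i-Y_i)\bigr)^T\bigl(\tfrac1\N\sum_{j=1}^\N(X_j-Y_j)\bigr)=0$. Expanding the double sum and invoking \eqref{eq:useGtu} on the columns of $X_i-Y_i$ and $X_j-Y_j$ gives $(\id_X-\id_Y)^T\Gtt(\id_X-\id_Y)=0$, which is \eqref{eq:avg_pres_gt_2}.

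Next, consider \eqref{eq:SDP_cons}. Put $U_i=Y_i-\lm X_i$ and $V_i=Y_i-\lp X_i$, with agent averages $\overline{U}=\tfrac1\N\sum_i U_i$ and $\overline{V}=\tfrac1\N\sum_i V_i$. Since projecting onto $\Cp$ amounts to subtracting the agent average from each agent block, $(\Yp-\lm\Xp)^T(\Yp-\lp\Xp)=\sum_{i=1}^\N (U_i-\overline{U})^T(V_i-\overline{V})$. Dividing by $\N>0$, which does not affect negative semidefiniteness, and applying \eqref{eq:useGdu} to the columns of $U_i$ and $V_i$ yields $(\id_Y-\lm\id_X)^T\Gdt(\id_Y-\lp\id_X)\preceq 0$, i.e.\ \eqref{eq:var_red_gd_2}. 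The same manipulation applied to \eqref{eq:sym_cons} — writing $\Xp^T\Yp-\Yp^T\Xp=\sum_i(X_i-\overline{X})^T(Y_i-\overline{Y})-\sum_i(Y_i-\overline{Y})^T(X_i-\overline{X})$, dividing by $\N$, and using \eqref{eq:useGdu} — gives \eqref{eq:sym_gd_gt_2}.

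The only part requiring care — and hence the main obstacle, such as it is — is ensuring that \eqref{eq:useGtu} and \eqref{eq:useGdu} are the correct replacements, i.e.\ that the double sum $\sum_{i,j}$ over all agents is split properly into the within-class terms ($i=j$, contributing $\N_u$ copies of $\Ga^u$; and $i\neq j$ in the same class $\V_u$, contributing $\N_u(\N_u-1)$ copies of $\Gr^u$) and the between-class terms ($i\in\V_u$, $j\in\V_v$ with $u\neq v$, contributing $\N_u\N_v$ copies of $\Gc^{uv}$), weighted by the class sizes exactly as in \eqref{eq:def_GTt} and \eqref{eq:def_GDt}. These identities were already established in the discussion preceding this proposition, so the rest is the routine bookkeeping above, structurally identical to the single-class case.
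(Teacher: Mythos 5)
Your proof is correct and follows exactly the route the paper takes: the paper's own proof is a one-line reduction to Proposition \ref{prop:consensus_interp_Gs}, replacing \eqref{eq:useGt} by \eqref{eq:useGtu} and \eqref{eq:useGd} by \eqref{eq:useGdu}, which is precisely the substitution you carry out (and spell out in more detail). No gaps.
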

 \begin{proof}
   The proof is similar to the one of Proposition \ref{prop:consensus_interp_Gs} but using relation \eqref{eq:useGtu} instead of \eqref{eq:useGt} and \eqref{eq:useGdu} instead of \eqref{eq:useGd}
 \end{proof}
 \paragraph*{Points common to all agents}
As detailed in Appendix \ref{ap:explicit_equiv} in the case where all agents are equivalent, for any point $x_c$ common to all the agents in the problem, we need two elements in the PEP: 
\begin{enumerate}[leftmargin=1cm,label=(\roman*)]
  \item the definition constraints for $x_c$, e.g. \eqref{eq:common_def}, 
  \item the interpolation constraints for the new triplet $(x_c,g_c,f_c)$, if $g_c$ or $f_c$ used in the PEP.
\end{enumerate}
The addition of a new triplet of points to consider in the interpolation constraints does not alter the result of Proposition \ref{prop:fct_interp} and they can still be written with $\fa^u$ and $\Ga^u$. 
The definition constraints can be treated as any other constraint of the problem. When there are several equivalence classes of agents in PEP, this allows more types of constraints to be expressed in the compact PEP formulation, and thus enlarges the type of common points that can be defined in the problem. Optimal point $x^*$ and agent-average $\xb$ can still be expressed in the compact problem, as shown in Propositions \ref{prop:opti_cons2} and \ref{prop:xb_cons2}. 

\begin{proposition}[Optimality constraint] \label{prop:opti_cons2}
  Let $x^*$ be an optimal point for the decentralized optimization problem \eqref{opt:dec_prob}. The definition constraints for the common variable $x^*$ in a PEP restricted to symmetrized agent-class solutions $(\Fss,\Gss)$ \eqref{eq:Fsu}-\eqref{eq:Gsu}, given by the system-level stationarity constraint
  \begin{equation}
    \frac{1}{n}\sum_{i=1}^\N \nabla f_i(x_i^*)=0, \quad \text{with $x^*_i = x^*_j = x^*$ for all $i,j \in\V$},
 \end{equation}
  can be expressed with $\Ga^u$, $\Gr^u$, and $\Gc^{uv}$ ($u,v=1,\dots,\U$) as
  \begin{equation}
     \id_{g^*}^T (\Gtt) \id_{g^*} = 0 \quad \text{ and } \quad \id_{x^*}^T (\Gdt) \id_{x^*} = 0,
  \end{equation}
  where $\Gtt$ and $\Gdt$ are defined in \eqref{eq:def_GTt} and \eqref{eq:def_GDt} and $\id_{g^*}, \id_{x^*} \in \Rvec{p}$ are vectors of coefficients such that $P_i \id_{g^*} = g_i^* = \nabla f_i(x^*)$, and $P_i \id_{x^*} = x_i^*$ for all $i=1,\dots,\N$.
\end{proposition}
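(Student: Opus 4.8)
The plan is to follow the proof of Proposition \ref{prop:opti_cons} almost verbatim, simply replacing the single‑class reformulation identities \eqref{eq:useGt} and \eqref{eq:useGd} by their multi‑class counterparts \eqref{eq:useGtu} and \eqref{eq:useGdu}. The common‑point definition for $x^*$ has two parts — the system‑level stationarity condition and the agreement condition $x_i^*=x_j^*$ — and I would treat them separately, exactly as in the equivalent‑agents case.

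First, for the stationarity condition $\frac{1}{\N}\sum_{i=1}^\N g_i^*=0$ with $g_i^*=\nabla f_i(x_i^*)$, I would rewrite it equivalently as the vanishing of a squared norm,
\[ \Big(\frac{1}{\N}\sum_{i=1}^\N g_i^*\Big)^{\!T}\Big(\frac{1}{\N}\sum_{j=1}^\N g_j^*\Big) = \frac{1}{\N^2}\sum_{i=1}^\N\sum_{j=1}^\N (g_i^*)^Tg_j^* = 0, \]
and then recognize the right‑hand side as the "average over the $\N^2$ pairs of agents" term treated by \eqref{eq:useGtu}. Applying that relation with $x_i=y_i=g_i^*$ gives $\id_{g^*}^T\Gtt\id_{g^*}=0$. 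Second, for the agreement condition $x_i^*=x_j^*$ for all $i,j\in\V$, I would encode the whole family of equalities in the single scalar constraint
\[ \frac{1}{2\N^2}\sum_{i=1}^\N\sum_{j=1}^\N \|x_i^*-x_j^*\|^2 = \frac{1}{\N^2}\sum_{i=1}^\N\sum_{j=1}^\N \big((x_i^*)^Tx_i^* - (x_i^*)^Tx_j^*\big) = 0, \]
which is an equivalent reformulation since a finite sum of squared norms vanishes iff each summand does. The right‑hand side is exactly the expression appearing in \eqref{eq:useGdu} (the average over agents of the scalar product of two centered variables, here $x_i^*$ centered around its agent average with itself), so \eqref{eq:useGdu} with $x_i=y_i=x_i^*$ yields $\id_{x^*}^T\Gdt\id_{x^*}=0$. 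Combining the two gives the claimed reformulation \eqref{eq:opti_cons}-style identities with $\Gtt$ and $\Gdt$ in place of $\Gt$ and $\Gd$, and since $\Gtt,\Gdt$ are built from $\Ga^u,\Gr^u,\Gc^{uv}$ by \eqref{eq:def_GTt} and \eqref{eq:def_GDt}, the constraints are expressed in the compact variables as required.

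I do not expect a genuine obstacle here: the only point needing care is that relations \eqref{eq:useGtu} and \eqref{eq:useGdu} — already derived in this appendix for symmetrized agent‑class solutions $(\Fss,\Gss)$ — must be applied to the quantities $g_i^*$ and $x_i^*$, which are common to all agents and hence present in every block $P_i$; but those relations are stated for arbitrary local variables of arbitrary agents, and in particular correctly account for pairs of agents lying in distinct equivalence classes via the $\Gc^{uv}$ blocks, so nothing extra is required. As with Proposition \ref{prop:consensus_interp_Gss}, the write‑up can therefore be short: "the proof is similar to that of Proposition \ref{prop:opti_cons} but using \eqref{eq:useGtu} instead of \eqref{eq:useGt} and \eqref{eq:useGdu} instead of \eqref{eq:useGd}." One may also add, paralleling the remark after Proposition \ref{prop:opti_cons}, that fixing $x^*=0$ makes the second constraint trivial ($\id_{x^*}=0$) and improves the numerical conditioning of the resulting SDP.
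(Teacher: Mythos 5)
Your proof is correct and follows exactly the paper's approach: the paper's own proof is the one-line remark that the argument of Proposition \ref{prop:opti_cons} carries over with \eqref{eq:useGtu} in place of \eqref{eq:useGt} and \eqref{eq:useGdu} in place of \eqref{eq:useGd}, which is precisely the substitution you make (and you correctly spell out the two squared-norm reformulations underlying it).
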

\begin{proof}
  The proof is similar to the one of Proposition \ref{prop:opti_cons} but using relation \eqref{eq:useGtu} instead of \eqref{eq:useGt} and \eqref{eq:useGdu} instead of \eqref{eq:useGd}.
\end{proof}
\begin{remark} Without loss of generality, the optimal solution $x^*$ of problem \eqref{opt:dec_prob} can be set to $x^*=0$. In that case, the constraint $x^*_i = x^*_j$, written as $\id_{x^*}^T (\Gdt) \id_{x^*} = 0$, is not needed to define $x^*$ properly. Indeed, it is sufficient to say that the coefficient vector is zero: $\id_{x^*} = 0$. This improves the numerical conditioning of the resulting SDP PEP.
\end{remark}

\begin{proposition}[Agent average definition] \label{prop:xb_cons2}
  We consider a PEP for distributed optimization, restricted to symmetrized agent-class solutions $(\Fss,\Gss)$ \eqref{eq:Fsu}-\eqref{eq:Gsu}. The definition constraints for $\xb$
  \begin{equation}
     \xb_i = \xb = \frac{1}{\N} \sum_{j=1}^\N x_j \quad \text{for all $i\in\V$,}
  \end{equation} 
  can be written with $\Ga^u$, $\Gr^u$, and $\Gc^{uv}$ ($u,v=1,\dots,\U$) as 
  \[ \id_{\xb}^T (\Gat)\id_{\xb} + \id_x^T \Gtt (\id_x - 2\id_{\xb}) = 0, \]
  where $\Gat$ and $\Gtt$ are defined in \eqref{eq:def_Gat} and \eqref{eq:def_GTt}, and $\id_{\xb}, \id_{x} \in \Rvec{p}$ are vectors of coefficients such that $P_i \id_{\xb} = \xb_i$, and $P_i \id_{x} = x_i$ for all $i=1,\dots,\N$.
\end{proposition}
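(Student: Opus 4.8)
The plan is to transcribe the proof of Proposition \ref{prop:xb_cons} to the multi-class setting, replacing the two-block reformulation identities \eqref{eq:useGa} and \eqref{eq:useGt} by their equivalence-class analogues \eqref{eq:useGA} and \eqref{eq:useGtu}. First I would collapse the whole family $\xb_i = \frac{1}{\N}\sum_{j=1}^\N x_j$ (one constraint per agent $i\in\V$) into a single scalar-product identity,
\[ \frac{1}{\N}\sum_{i=1}^\N\Big(\xb_i - \tfrac{1}{\N}\sum_{j=1}^\N x_j\Big)^T\Big(\xb_i - \tfrac{1}{\N}\sum_{l=1}^\N x_l\Big) = 0, \]
which is equivalent to the family because each summand is a squared norm, so the sum vanishes precisely when every $\xb_i$ equals the agent average. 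Here I would also note that, since $\xb$ is common to all agents, it is copied into every block $P_i$ (as detailed in the paragraph "Points common to all agents" of Section~\ref{sec:equicl}), so $\id_{\xb}$ is a single coefficient vector with $P_i\id_{\xb}=\xb_i$ for all $i$, exactly like $\id_x$.

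Next I would expand the square into three pieces, $\frac{1}{\N}\sum_i \xb_i^T\xb_i$, $\frac{1}{\N^2}\sum_{j,l} x_j^Tx_l$, and $-\frac{2}{\N^2}\sum_{i,j}\xb_i^Tx_j$. The first piece is an average over the agents of a scalar product between two variables of the \emph{same} agent, so by \eqref{eq:useGA} it equals $\id_{\xb}^T\Gat\id_{\xb}$, with $\Gat$ the size-weighted average of the $\Ga^u$ defined in \eqref{eq:def_Gat}. The second and third pieces are averages over the $\N^2$ ordered pairs of agents of scalar products between two agent-indexed variables, so by \eqref{eq:useGtu} they equal $\id_x^T\Gtt\id_x$ and $-2\,\id_x^T\Gtt\id_{\xb}$ respectively, with $\Gtt$ defined in \eqref{eq:def_GTt} (using that $\Gtt$ is symmetric, since $\Ga^u$, $\Gr^u$ are symmetric and $\Gc^{vu}=(\Gc^{uv})^T$, so $\sum_{u\ne v}\N_u\N_v\Gc^{uv}$ is symmetric). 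Summing the three pieces yields $\id_{\xb}^T\Gat\id_{\xb} + \id_x^T\Gtt(\id_x - 2\id_{\xb}) = 0$, which is the claimed reformulation; since $\Gat$ and $\Gtt$ are finite linear combinations of $\Ga^u$, $\Gr^u$, $\Gc^{uv}$, this is an agent-class Gram-representable constraint.

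I do not expect a genuine obstacle here, as the argument is a direct adaptation of Proposition \ref{prop:xb_cons}. The only point requiring care is the bookkeeping: the "same-agent" term must be routed through \eqref{eq:useGA} (producing $\Gat$) and the two "cross-agent" terms through \eqref{eq:useGtu} (producing $\Gtt$); using the wrong identity would silently yield an incorrect matrix. This mirrors precisely the remark, made after Proposition \ref{prop:consensus_interp_Gss}, that the multi-class proofs are obtained from their one-class counterparts by swapping \eqref{eq:useGt}, \eqref{eq:useGd} for \eqref{eq:useGtu}, \eqref{eq:useGdu}.
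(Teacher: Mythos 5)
Your proof is correct and follows essentially the same route as the paper: it reproduces the sum-of-squares reformulation and three-term expansion from Proposition \ref{prop:xb_cons}, routing the same-agent term through \eqref{eq:useGA} and the two cross-agent terms through \eqref{eq:useGtu}, which is exactly the substitution the paper's (one-line) proof prescribes. The added remarks on the equivalence of the constraint family with the single squared-norm identity and on the symmetry of $\Gtt$ are correct and only make the argument more explicit.
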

\begin{proof}
  The proof is similar to the one of Proposition \ref{prop:xb_cons} but using relation \eqref{eq:useGA} instead of \eqref{eq:useGa} and \eqref{eq:useGtu} instead of \eqref{eq:useGt}.
\end{proof}

A point common to all agents, that can only be defined using different equivalence classes of agents, is the iterate of a given agent $x_1^\K$, at which all the local functions are evaluated $f_1(x_1^\K),\dots, f_\N(x_1^\K)$. This can be used to evaluate the performance of the worst agent, see Section \ref{sec:worst_agent}. The definition of such a point is treated in the following proposition. 

 \begin{proposition}[Specific agent definition] 
  We consider a PEP for distributed optimization, restricted to symmetrized agent-class solutions $(\Fss,\Gss)$ \eqref{eq:Fsu}-\eqref{eq:Gsu}. Let $\V_1 = \{1\}$ be one of the $\U$ equivalence classes of the PEP. If $x_1$ is a common variable used by all the agents in the PEP, its definition constraints are given by
  \begin{equation} \label{eq:wcx_def}
     x_{c,i} = x_1 \quad \text{for all $i\in\V$}
  \end{equation} 
  and can be written with $\Ga^u$ and $\Gc^{uv}$ ($u,v=1,\dots,\U$) as 
  \begin{equation} \label{eq:wcx_reform}
     \id_{x_c}^T (\Ga^u)\id_{x_c} + \id_{x}^T (\Ga^1)\id_{x} - 2 \id_{x_c}^T (\Gc^{u1})\id_{x} = 0, \qquad \text{for all $u=2,\dots,\U$,}
  \end{equation}
  where $\id_{x_c}, \id_{x_1} \in \Rvec{p}$ are vectors of coefficients such that $P_i \id_{x_c} = X_{c,i}$, and $P_i \id_{x} = x_i$ for all $i=1,\dots,\N$.
 \end{proposition}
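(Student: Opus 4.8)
The plan is to follow the same template as the proofs of Propositions~\ref{prop:opti_cons2} and \ref{prop:xb_cons2}: turn each vector equality in \eqref{eq:wcx_def} into a vanishing squared norm, expand it into scalar products, and then replace every scalar product by the corresponding block of the symmetrized agent-class solution $(\Fss,\Gss)$ \eqref{eq:Fsu}--\eqref{eq:Gsu} via the relations established above.

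First I would fix an equivalence class $\V_u$ with $u\ge 2$ and pick any representative agent $i\in\V_u$. In the Gram formulation, the constraint $x_{c,i}=x_1$ is equivalent to $\|x_{c,i}-x_1\|^2=0$, i.e. to $x_{c,i}^Tx_{c,i}-2\,x_{c,i}^Tx_1+x_1^Tx_1=0$. Now $x_{c,i}$ is a variable of agent $i\in\V_u$, so by \eqref{eq:useGau} one has $x_{c,i}^Tx_{c,i}=\id_{x_c}^T\Ga^u\id_{x_c}$; since $\V_1=\{1\}$, the vector $x_1$ is a variable of the unique agent of class $\V_1$, so again by \eqref{eq:useGau} $x_1^Tx_1=\id_x^T\Ga^1\id_x$; and $x_{c,i}^Tx_1$ is a scalar product between a variable of agent $i\in\V_u$ and a variable of agent $1\in\V_1$ with $u\ne 1$, which by the definition of the off-diagonal block $\Gc^{u1}$ in \eqref{eq:Gsu} (see also Remark~\ref{rem:blocks}) equals $\id_{x_c}^T\Gc^{u1}\id_x$. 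Substituting these three identities gives exactly \eqref{eq:wcx_reform}.

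Two short remarks then close the argument. Because $(\Fss,\Gss)$ is a symmetrized agent-class solution, the blocks $G_{ii}=\Ga^u$ and $G_{i1}=\Gc^{u1}$ are the same for every $i\in\V_u$, so the single equation \eqref{eq:wcx_reform}, written for one representative, already enforces $x_{c,i}=x_1$ for all $i\in\V_u$; and a vanishing squared norm computed from the Gram matrix forces the corresponding vector equality in any realization $P$ of $\Gss$ (e.g. via Cholesky), so the reformulation is lossless. For the remaining class $u=1$, the constraint $x_{c,1}=x_1$ is a single-agent constraint: it can be written as $(\id_{x_c}-\id_x)^T\Ga^1(\id_{x_c}-\id_x)=0$, or, more economically, satisfied directly by taking $\id_{x_c}=\id_x$, which avoids adding a new column to $P_1$. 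In neither case is a new block beyond those of $\Gss$ needed.

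I do not expect a genuine obstacle here; the only points needing care are picking the correct Gram block for the cross term $x_{c,i}^Tx_1$ --- which hinges on $\V_1$ being the singleton $\{1\}$, so that ``agent $1$'' is unambiguously its sole member --- and observing that imposing these squared-norm constraints at the Gram level indeed reconstructs the vector identities \eqref{eq:wcx_def}, exactly as in Propositions~\ref{prop:opti_cons2} and \ref{prop:xb_cons2}.
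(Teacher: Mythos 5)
Your proposal is correct and follows essentially the same route as the paper's proof: expand $\|x_{c,i}-x_1\|^2=0$ into scalar products, then identify the two diagonal terms via \eqref{eq:useGau} and the cross term via the definition of $\Gc^{u1}$ in \eqref{eq:Gsu}. Your additional remarks on why one representative per class suffices and on the $u=1$ case are sensible elaborations of points the paper leaves implicit, but they do not change the argument.
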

 \begin{proof}
    Constraint \eqref{eq:wcx_def} can be written as $(x_{c,i} - x_1)^2 = 0$, which can be expanded as 
    \[ x_{c,i}^Tx_{c,i} + x_1^Tx_1 - 2x_{c,i}^Tx_1 =  0 \quad \text{for all $i\in \V$}. \]
    This constraint can be expressed as \eqref{eq:wcx_reform}, by using relation \eqref{eq:useGau} for the first two terms and definition of $\Gc^{uv}$ \eqref{eq:Gsu} for the third term.
 \end{proof}

 \paragraph*{Initial conditions and performance measures}
 We adapt Proposition \ref{prop:initial_cond} to the case where there are multiple equivalence classes of agents in the PEP, in order to present usual initial conditions and their reformulation. The proposition focuses on the initial conditions, but all the expressions involved in these constraints can also be adapted as a performance measure, using the last point $x^t$ instead of the initial point $x^0$.
 Other initial conditions or performance measures could involve only one of the equivalence classes of agents. 

 \begin{proposition}[Initial conditions]  \label{prop:initial_cond_2}
   Let $x^* \in \Rvec{d}$ denote the optimal solution of the distributed optimization problem, $x_i^0 \in \Rvec{d}$ the initial iterate of agent $i$, $\xb^0$ the average initial iterate and $R \in \R$ a constant. When the PEP is restricted to symmetric agent-class solutions $(\Fss,\Gss)$ \eqref{eq:Fsu}-\eqref{eq:Gsu}, here is a list of initial conditions that can be expressed with $\fa^u$, $\Ga^u$, $\Gr^u$, and $\Gc^{uv}$ (for $u,v=1,\dots,\U$): 
    \begin{align}
      \text{Initial}&\text{ condition} \hspace{1cm}& \text{Reformulation}& \\  
     \|x_i^0 - x^*\|^2 &\le R^2 \quad \text{for all $i=1,\dots,\N$} & (\id_{x^0}-\id_{x^*})^T \Ga^u (\id_{x^0}-\id_{x^*}) &\le R^2, \quad \text{for all $u=1,\dots,\U$} \\[-4mm]
      \frac{1}{\N} \sum_{i=1}^\N \|x_i^0 - x^*\|^2 &\le R^2, & (\id_{x^0}-\id_{x^*})^T \Gat (\id_{x^0}-\id_{x^*}) &\le R^2, \\
      \|\nabla f_i(x_i^0)\|^2 &\le R^2,  \quad \text{for all $i=1,\dots,\N$} & (\id_{g^0})^T (\Ga^u) (\id_{g^0}) &\le R^2, \quad \text{for all $u=1,\dots,\U$} \\[-4mm]
      \frac{1}{\N} \sum_{i=1}^\N \|\nabla f_i(x_i^0)\|^2 &\le R^2,  \quad \text{for all $i=1,\dots,\N$} & (\id_{g^0})^T \Gat (\id_{g^0}) &\le R^2, \\
     \frac{1}{\N} \sum_{i=1}^\N \|x_i^0 - \xb^0\|^2 &\le R^2, & (\id_{x^0})^T \Gdt (\id_{x^0}) &\le R^2, \\
      x_i^0 &= x_j^0 \quad \text{for all $i,j=1,\dots,\N$} & (\id_{x^0})^T \Gdt (\id_{x^0}) &= 0, \\
      \hspace*{-5mm} \frac{1}{\N}\sum_{i=1}^\N \qty(f_i(\xb^0) - f_i(x^*)) &\le R &  \hspace*{-4cm} \qty(\id_{f(\xb^0)} - \id_{f(x^*)})^T \fat \qty(\id_{f(\xb^0)} - \id_{f(x^*)})  &\le R
    \end{align}
  where $\fat$, $\Gat$ and $\Gdt$ are defined in \eqref{eq:def_fat}, \eqref{eq:def_Gat} and \eqref{eq:def_GDt}, and  $\id_{x^0}, \id_{x^*}, \id_{g^0} \in \Rvec{p}$ are vectors of coefficients such that $P_i \id_{x^0} = x_i^0$, $P_i \id_{x^*} = x^*$ and $P_i \id_{g^0} = \nabla f_i(x_i^0)$, for $i=1,\dots,\N$.
 \end{proposition}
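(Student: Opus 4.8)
The plan is to reduce Proposition \ref{prop:initial_cond_2} to Proposition \ref{prop:initial_cond} by the same mechanism used there, only with the single-class reduction identities replaced by their multi-class counterparts. Each listed left-hand side is, after expanding squared norms into scalar products, a linear combination of four basic patterns: a single-agent term, an agent-average of function values, an agent-average of same-agent scalar products, an agent-average of centered same-agent scalar products, and a double-average over all $\N^2$ pairs of agents. These patterns have already been written in terms of the blocks $\fa^u,\Ga^u,\Gr^u,\Gc^{uv}$ — and their weighted combinations $\fat,\Gat,\Gtt,\Gdt$ — in relations \eqref{eq:usefau}, \eqref{eq:usefat}, \eqref{eq:useGau}, \eqref{eq:useGA}, \eqref{eq:useGtu} and \eqref{eq:useGdu}. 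So the proof amounts to matching each condition to the appropriate relation.

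First I would treat the uniform single-agent conditions, $\|x_i^0-x^*\|^2\le R^2$ and $\|\nabla f_i(x_i^0)\|^2\le R^2$ for all $i$. Writing $\|x_i^0-x^*\|^2=(x_i^0-x^*)^T(x_i^0-x^*)$ and applying \eqref{eq:useGau} for an agent $i$ in class $\V_u$ yields $(\id_{x^0}-\id_{x^*})^T\Ga^u(\id_{x^0}-\id_{x^*})$; since on a symmetrized agent-class solution $(\Fss,\Gss)$ this value is identical for every agent of $\V_u$, the whole family collapses to one constraint per class $u=1,\dots,\U$. The squared-gradient condition is identical with $\nabla f_i(x_i^0)$ in place of $x_i^0-x^*$. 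Then I would handle the full averages: $\frac{1}{\N}\sum_i\|x_i^0-x^*\|^2$ and $\frac{1}{\N}\sum_i\|\nabla f_i(x_i^0)\|^2$ become $\Gat$-quadratic forms via \eqref{eq:useGA}, and $\frac{1}{\N}\sum_i\qty(f_i(\xb^0)-f_i(x^*))$ becomes $(\id_{f(\xb^0)}-\id_{f(x^*)})^T\fat$ via \eqref{eq:usefat}.

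The consensus-type conditions come next: $\frac{1}{\N}\sum_i\|x_i^0-\xb^0\|^2$ is exactly relation \eqref{eq:useGdu} applied to $x_i^0$, so it equals $(\id_{x^0})^T\Gdt(\id_{x^0})$; and the strict consensus requirement $x_i^0=x_j^0$ for all $i,j$ I would rewrite as $\frac{1}{2\N^2}\sum_{i,j}\|x_i^0-x_j^0\|^2=0$, expand it and use symmetry of the double sum to get $\frac{1}{\N^2}\sum_{i,j}\qty((x_i^0)^Tx_i^0-(x_i^0)^Tx_j^0)=0$, and invoke \eqref{eq:useGdu} once more to obtain $(\id_{x^0})^T\Gdt(\id_{x^0})=0$. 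All of the performance-measure variants are obtained verbatim by replacing $x^0$ by the final iterate.

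The main obstacle is organizational rather than mathematical: one must be careful, in the regroupings of the $\frac{1}{\N^2}\sum_{i,j}$ sums into intra-class and inter-class parts, to exploit the symmetry of the double sum exactly as in the derivations of \eqref{eq:useGtu} and \eqref{eq:useGdu}, and to keep the correct weights $\N_u$, $\N_u(\N_u-1)$, $\N_u\N_v$ attached to $\Ga^u$, $\Gr^u$, $\Gc^{uv}$ — but these weights are already packaged inside $\fat,\Gat,\Gtt,\Gdt$, so consistently using the packaged blocks (and the identity $\Gat=\Gtt+\Gdt$) is what keeps the argument short. Since all these reduction identities were verified when \eqref{eq:usefau}--\eqref{eq:useGdu} were established, the proof concludes exactly as that of Proposition \ref{prop:initial_cond}, citing the multi-class relations in place of the single-class ones.
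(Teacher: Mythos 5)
Your proposal is correct and follows essentially the same route as the paper, which simply reduces Proposition \ref{prop:initial_cond_2} to the proof of Proposition \ref{prop:initial_cond} by substituting the multi-class relations \eqref{eq:useGau}, \eqref{eq:useGA}, \eqref{eq:useGdu} and \eqref{eq:usefat} for their single-class counterparts. Your treatment is in fact slightly more explicit than the paper's (which does not spell out the use of \eqref{eq:useGA} for the averaged conditions yielding $\Gat$), but there is no substantive difference in the argument.
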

 \begin{proof}
The proof is similar to the one of Proposition \ref{prop:initial_cond} but using relation \eqref{eq:useGau} instead of \eqref{eq:useGa}, relation \eqref{eq:useGdu} instead of \eqref{eq:useGd} and relation \eqref{eq:usefat} instead of \eqref{eq:usefa}.
 \end{proof}

\end{document}